\newtheorem {Lemma}{Lemma}[section]
\newtheorem {Theorem} {Theorem}[section]
\newtheorem {Corollary}{Corollary}[section]
\begin{document}

\title{On the distance $\alpha$-spectral radius of a connected graph}

\author{Haiyan Guo\footnote{ghaiyan0705@163.com}, Bo Zhou\footnote{Corresponding author. E-mail: zhoubo@scnu.edu.cn}\\
School of  Mathematical Sciences, South China Normal University,\\
Guangzhou 510631, P.R. China}

\date{}
\maketitle

\begin{abstract}
For a connected graph $G$ and $\alpha\in [0,1)$, the distance $\alpha$-spectral radius of $G$ is the spectral radius of the matrix $D_{\alpha}(G)$ defined as $D_{\alpha}(G)=\alpha T(G)+(1-\alpha)D(G)$, where $T(G)$ is a diagonal matrix of vertex transmissions of $G$ and $D(G)$ is the distance matrix of $G$.  We give bounds for the distance $\alpha$-spectral radius, especially for graphs that are not transmission regular,   propose some  graft transformations that decrease or increase the distance $\alpha$-spectral radius, and determine the unique graphs with minimum and maximum distance $\alpha$-spectral radius among some classes of graphs. \\ \\
{\bf AdMS classifications:} 05C50, 15A18\\ \\
{\bf Key words:} distance $\alpha$-spectral radius, graft transformation, maximum degree, clique number,  tree, unicyclic graph
\end{abstract}

\section{Introduction}

We consider simple and undirected graphs. Let $G$ be a connected graph of order $n$ with vertex set $V(G)$ and edge set $E(G)$. For $u,v\in V(G)$, the distance between $u$ and $v$ in $G$, denoted by $d_G(u,v)$ or simply $d_{uv}$, is the length of a shortest path from $u$ to $v$ in $G$. The distance matrix of $G$ is the $n\times n$ matrix
$D(G)=(d_G(u,v))_{u,v\in V(G)}$. For $u\in V(G)$, the transmission of $u$ in $G$, denoted by $T_G(u)$, is defined as the sum of distances from $u$ to all other vertices of $G$, i.e., $T_G(u)=\sum_{v\in V(G)}d_G(u,v)$. The transmission matrix $T(G)$ of $G$ is the diagonal matrix of transmissions of $G$. Then $Q(G)=T(G)+D(G)$ is the distance signless Laplacian matrix of $G$.

Throughout this paper we assume that $\alpha\in [0,1)$. We consider the convex combinations $D_{\alpha}(G)$ of $T(G)$ and $D(G)$, defined as
\[
D_{\alpha}(G)=\alpha T(G)+(1-\alpha)D(G).\]
Obviously, $D_0(G)=D(G)$ and $2D_{1/2}(G)=Q(G)$. We call the eigenvalues of $D_{\alpha}(G)$ the distance $\alpha$-eigenvalues of $G$.  As $D_{\alpha}(G)$ is a symmetric matrix, the distance $\alpha$-eigenvalues of $G$ are all real, which are denoted by $\mu^{(1)}_{\alpha} (G), \dots, \mu^{(n)}_{\alpha}(G)$, arranged in nonincreasing order, where $n=|V(G)|$. The largest distance $\alpha$-eigenvalue  $\mu^{(1)}_{\alpha} (G)$ of $G$ is called the distance $\alpha$-spectral radius of $G$, written as $\mu_{\alpha} (G)$.
Obviously, $\mu^{(1)}_{0} (G), \dots, \mu^{(n)}_{0}(G)$ are the distance eigenvalues of $G$, and $2\mu^{(1)}_{1/2} (G), \dots, 2\mu^{(n)}_{1/2}(G)$ are the distance signless Laplacian eigenvalues of $G$. Particularly,  $\mu_0(G)$ is just the distance spectral radius and  $2\mu_{1/2}(G)$ is just the distance signless Laplacian spectral radius of $G$.
The distance eigenvalues and especially the distance spectral radius have been extensively studied, see the recent survey~\cite{AH} and references therein. The distance signless Laplacian eigenvalues and especially the distance signless Laplacian spectral radius have also received much attention, see, e.g., \cite{AH2, AP, BKNS, DAS, LL, LZ, XZL}.

In this paper,  we give sharp bounds for the distance $\alpha$-spectral radius, and particularly an upper bound  for the distance $\alpha$-spectral radius of connected graphs that are not transmission regular,  and propose some types of graft transformations that decrease or increase the distance $\alpha$-spectral radius. We also   determine the unique graphs with minimum  distance $\alpha$-spectral radius among trees and unicyclic graphs, respectively, as well as the unique graphs (trees) with maximum and second maximum distance $\alpha$-spectral radii, and the unique graph with maximum  distance $\alpha$-spectral radius among connected graphs with given clique number, and among odd-cycle  unicyclic graphs, respectively.

\section{Preliminaries}

Let $G$ be a connected graph with $V(G)=\{v_1,\dots,v_n\}$. A column vector $x=(x_{v_1},\dots, x_{v_n})^\top\in \mathbb{R}^n$ can be considered as a function defined on $V(G)$ which maps vertex $v_i$ to $x_{v_i}$, i.e., $x(v_i)=x_{v_i}$ for $i=1,\dots,n$. Then
\[
x^\top D_{\alpha}(G)x=\alpha\sum_{u\in V(G)}T_G(u)x_u^2+2\sum_{\{u,v\}\subseteq V(G)}(1-\alpha)d_G(u,v)x_{u}x_{v},
\]
or equivalently,
\[
x^\top D_{\alpha}(G)x=\sum_{\{u,v\}\subseteq V(G)}d_G(u,v)\left(\alpha(x_u^2+x_v^2)+2(1-\alpha)x_{u}x_{v}\right).
\]
Since $D_{\alpha}(G) $ is a nonnegative irreducible matrix, by the Perron-Frobenius theorem, $\mu_{\alpha} (G)$ is simple and there is a unique positive unit eigenvector corresponding  to $\mu_{\alpha} (G)$, which is called the distance $\alpha$-Perron vector  of $G$.
If $x$ is the distance $\alpha$-Perron vector of $G$, then  for each $u\in V(G)$,
\[
\mu_{\alpha}(G)x_u=\alpha T_G(u)x_u+(1-\alpha)\sum_{v\in V(G)}d_G(u,v)x_v,
\]
or equivalently,
\[
\mu_{\alpha}(G)x_u=\sum_{v\in V(G)}d_G(u,v)(\alpha x_u+(1-\alpha)x_v),
\]
which  is called the $\alpha$-eigenequation of $G$ at $u$.
For a unit column vector $x\in\mathbb{R}^n$ with at least one nonnegative entry, by Rayleigh's principle, we have $\mu_{\alpha} (G)\ge x^{\top}D_{\alpha}(G)x$
with equality if and only if $x$ is the distance $\alpha$-Perron vector  of $G$.


\begin{Lemma}\label{auto}
Let $G$ be a connected graph with $\eta$ being an automorphism of $G$, and $x$ a distance $\alpha$-Perron vector of $G$. Then for $u,v\in V(G)$,  $\eta(u)=v$ implies that $x_u=x_v$.
\end{Lemma}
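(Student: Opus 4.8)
The plan is to exploit the uniqueness of the distance $\alpha$-Perron vector guaranteed by the Perron--Frobenius theorem: since $\eta$ is a symmetry of $G$, it induces a symmetry of $D_{\alpha}(G)$, and the unique positive unit eigenvector must therefore itself be invariant under that symmetry. Concretely, I would let $P$ denote the permutation matrix of $\eta$ and first establish that $P$ commutes with $D_{\alpha}(G)$, i.e. $PD_{\alpha}(G)=D_{\alpha}(G)P$.

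The commutation rests entirely on the fact that an automorphism preserves distances: $d_G(\eta(s),\eta(t))=d_G(s,t)$ for all $s,t\in V(G)$. This immediately gives that $P$ commutes with the distance matrix $D(G)$, and summing this identity over the second coordinate shows $T_G(\eta(u))=T_G(u)$, so that $P$ also commutes with the diagonal transmission matrix $T(G)$. Taking the convex combination, $P$ commutes with $D_{\alpha}(G)=\alpha T(G)+(1-\alpha)D(G)$.

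From here the argument is short. If $x$ is the distance $\alpha$-Perron vector, then $D_{\alpha}(G)(Px)=PD_{\alpha}(G)x=\mu_{\alpha}(G)Px$, so $Px$ is again an eigenvector for $\mu_{\alpha}(G)$; moreover $Px$ is positive and, since $P$ is orthogonal, a unit vector. By the uniqueness of the positive unit eigenvector recorded before the lemma, $Px=x$. Reading off coordinates gives $x_{\eta(u)}=x_u$ for every $u$, which is exactly the assertion that $\eta(u)=v$ forces $x_u=x_v$.

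If one prefers to avoid permutation matrices, the same computation can be carried out directly on the $\alpha$-eigenequation: define $y$ by $y_u=x_{\eta(u)}$ and verify, via the substitution $w\mapsto\eta(w)$ together with $d_G(u,w)=d_G(\eta(u),\eta(w))$, that $y$ satisfies the $\alpha$-eigenequation at every vertex with the same value $\mu_{\alpha}(G)$; uniqueness then yields $y=x$. The only genuine content is the distance-preservation property of $\eta$, and the single point requiring a little care is the index bookkeeping in this substitution (equivalently, the verification of the two commutation relations), since everything else is an immediate appeal to Perron--Frobenius. I expect this bookkeeping to be the main, though modest, obstacle.
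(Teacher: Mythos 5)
Your proposal is correct and follows essentially the same route as the paper: both establish that the permutation matrix $P$ of $\eta$ leaves $D_{\alpha}(G)$ invariant (the paper writes this as $D_{\alpha}(G)=P^{\top}D_{\alpha}(G)P$, you as commutation, which is equivalent since $P$ is orthogonal) and then invoke the uniqueness of the positive unit Perron eigenvector to conclude $Px=x$. The only cosmetic difference is that the paper passes through the Rayleigh quotient $(Px)^{\top}D_{\alpha}(G)(Px)=\mu_{\alpha}(G)$ while you verify the eigenvector equation for $Px$ directly; your explicit check that transmissions are preserved is a detail the paper leaves implicit.
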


\begin{proof}  Let $P=(p_{uv})_{u,v\in V(G)}$ be  the permutation matrix such that  $p_{vu}=1$ if and only if  $\eta(u)=v$ for  $u,v\in V(G)$.  We have $D_{\alpha}(G) = P^{\top}D_{\alpha}(G) P$ and $Px$ is a positive unit vector. Thus $\mu_{\alpha}(G)=x^\top D_{\alpha}(G)x=(Px)^\top D_{\alpha}(G)(Px)$, implying that $Px$ is also a distance $\alpha$-Perron vector of $G$. Thus $P x = x$, and  the result follows.
\end{proof}

Let $G$ be a graph. For $v\in V(G)$, let $N_G(v)$ be the set of neighbors of $v$ in $G$, and $d_G(v)$ be the  degree of  $v$ in $G$. Let $G-v$ be the subgraph of $G$ obtained by deleting $v$ and all edges containing $v$. For $S\subseteq V(G)$, let $G[S]$ be the subgraph induced by $S$. For a subset $E'$ of $E(G)$, $G-E'$ denotes the graph obtained from $G$ by deleting all the edges in $E'$, and in particular, we write $G-xy$ instead of $G-\{xy\}$ if $E_1=\{xy\}$.
Let $\overline{G}$ be the complement of $G$. For a subset $E'$ of $E(\overline{G})$, denote $G+E'$ the graph obtained from $G$ by adding all edges in $E'$, and in particular, we write $G+xy$ instead of $G+\{xy\}$ if $E'=\{xy\}$.

For a nonnegative square  matrix $A$, the Perron-Frobenius theorem implies that $A$ has an eigenvalue that  is equal  the maximum modulus of all its  eigenvalues; this eigenvalue is called the spectral radius of $A$, denoted by $\rho(A)$. Obviously, $\mu_{\alpha}(G)=\rho(D_{\alpha}(G))$ for a connected graph $G$. 

Restating Corollary 2.1 in \cite[p.~38]{HM}, we have

\begin{Lemma}\label{non}\cite{HM}  Let $A$ and $B$ be square nonnegative matrices. If $A$ is irreducible, $A\ge B$, and $A\neq B$, then $\rho(A)> \rho (B)$.
\end{Lemma}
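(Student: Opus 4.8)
The plan is to prove this via the standard pairing of Perron eigenvectors, with the strict inequality coming from an irreducibility bootstrap. First I would invoke the Perron--Frobenius theorem in two forms. Since $A$ is nonnegative and irreducible, its transpose $A^\top$ is also nonnegative and irreducible, so $A$ admits a strictly positive left Perron eigenvector $w>0$ with $w^\top A=\rho(A)w^\top$. Since $B$ is merely nonnegative, the general (non-irreducible) form of the theorem guarantees that $\rho(B)$ is an eigenvalue of $B$ with some nonnegative right eigenvector $z\ge 0$, $z\neq 0$, satisfying $Bz=\rho(B)z$.

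Next I would establish the weak inequality $\rho(A)\ge\rho(B)$ by sandwiching the scalar $w^\top A z$. On one hand, $w^\top A z=\rho(A)\,w^\top z$. On the other hand, because $A\ge B$ entrywise and $z\ge 0$, we have $Az\ge Bz$ componentwise, and since $w>0$ this yields $w^\top A z\ge w^\top B z=\rho(B)\,w^\top z$. As $w>0$ and $z\ge 0$ with $z\neq 0$, the number $w^\top z$ is strictly positive, so dividing gives $\rho(A)\ge\rho(B)$.

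For the strict inequality I would argue by contradiction, assuming $\rho(A)=\rho(B)$. Then the two expressions for $w^\top A z$ and $w^\top B z$ coincide, so $w^\top(A-B)z=0$. Writing this as $\sum_{i,j}w_i(A-B)_{ij}z_j=0$, every summand is nonnegative (as $w>0$, $A-B\ge 0$, $z\ge 0$), hence every summand vanishes; since each $w_i>0$, this forces $(A-B)z=0$, i.e. $Az=Bz=\rho(A)z$. Thus $z$ is a nonnegative eigenvector of the irreducible matrix $A$ for the eigenvalue $\rho(A)$, so by Perron--Frobenius $z$ must be a positive multiple of the Perron vector of $A$ and therefore strictly positive, $z>0$. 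But then $(A-B)z=0$ with $A-B\ge 0$ and $z>0$ forces $A-B=0$, contradicting $A\neq B$. Hence $\rho(A)>\rho(B)$.

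The one step carrying the real content is the last paragraph: the weak inequality is routine, but the jump to strictness hinges on using irreducibility of $A$ to promote the nonnegative vector $z$ to a strictly positive one. I expect this bootstrap --- first deducing $(A-B)z=0$, then recognizing $z$ as the Perron vector of $A$ and concluding $z>0$ --- to be the crux, since it is exactly where the hypotheses that $A$ is irreducible and that $A\neq B$ must combine.
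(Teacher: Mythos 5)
Your proof is correct: the weak inequality via the left Perron vector of $A$ paired with a nonnegative eigenvector of $B$, followed by the irreducibility bootstrap that forces $z>0$ and hence $A=B$, is a complete and standard argument, and every step (in particular $w^\top z>0$ and the promotion of $z$ to a strictly positive Perron vector of $A$) is justified. Note, however, that the paper itself offers no proof to compare against --- it simply restates Corollary 2.1 of Minc's \emph{Nonnegative Matrices} \cite{HM} and uses it as a black box --- so your self-contained derivation supplies a proof where the paper only supplies a citation.
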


By Lemma~\ref{non}, we have

\begin{Lemma}\label{ad} Let $G$ be a connected graph with $u,v\in V(G)$. If $u$ and $v$ are not adjacent, then
$\mu_{\alpha}(G+uv)< \mu_{\alpha}(G)$.
\end{Lemma}

The transmission of a connected graph $G$, denoted by $\sigma (G)$, is the sum of distance between all unordered pairs of vertices in $G$. Clearly, $\sigma (G)=\frac{1}{2}\sum_{v\in V(G)} T_G(v)$. A graph is said to be transmission regular if $T_G(v)$ is a constant for each $v\in V(G)$.

\begin{Lemma}\label{xz} Let $G$ be a connected graph of order $n$. Then
\[\mu_{\alpha}(G)\ge \frac{2\sigma (G)}{n}\] with equality if and only if $G$ is transmission regular.
\end{Lemma}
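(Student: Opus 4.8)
The plan is to apply Rayleigh's principle with the normalized all-ones vector as the test vector. Set $x=\tfrac{1}{\sqrt{n}}\mathbf{1}$, where $\mathbf{1}=(1,\dots,1)^\top$; this is a unit vector with nonnegative entries, so the variational bound $\mu_{\alpha}(G)\ge x^\top D_{\alpha}(G)x$ recorded just before Lemma~\ref{auto} applies directly.

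First I would evaluate $x^\top D_{\alpha}(G)x=\tfrac{1}{n}\,\mathbf{1}^\top D_{\alpha}(G)\mathbf{1}$. Using $D_{\alpha}(G)=\alpha T(G)+(1-\alpha)D(G)$, note that $\mathbf{1}^\top T(G)\mathbf{1}=\sum_{u\in V(G)}T_G(u)=2\sigma(G)$, and that $\mathbf{1}^\top D(G)\mathbf{1}=\sum_{u,v\in V(G)}d_G(u,v)=\sum_{u\in V(G)}T_G(u)=2\sigma(G)$, the last sum being over ordered pairs. Hence both contributions equal $2\sigma(G)$, so $x^\top D_{\alpha}(G)x=\tfrac{2\sigma(G)}{n}$, which yields the inequality.

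For the equality case I would invoke the sharp form of Rayleigh's principle stated above: equality holds if and only if $x$ is the distance $\alpha$-Perron vector of $G$, equivalently $\mathbf{1}$ is an eigenvector of $D_{\alpha}(G)$. Computing the $u$-th coordinate of $D_{\alpha}(G)\mathbf{1}$ gives $\alpha T_G(u)+(1-\alpha)\sum_{v\in V(G)}d_G(u,v)=\alpha T_G(u)+(1-\alpha)T_G(u)=T_G(u)$, so $D_{\alpha}(G)\mathbf{1}=(T_G(v_1),\dots,T_G(v_n))^\top$. This is a scalar multiple of $\mathbf{1}$ precisely when all the transmissions coincide, i.e. when $G$ is transmission regular. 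Conversely, if $G$ is transmission regular with common transmission $t$, then $D_{\alpha}(G)\mathbf{1}=t\mathbf{1}$, and since $\mathbf{1}$ is positive, $x$ is the distance $\alpha$-Perron vector and equality holds.

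There is no serious obstacle here: the whole argument reduces to the Rayleigh quotient computation together with the observation that each row of $D_{\alpha}(G)$ sums to $T_G(u)$, independently of $\alpha$. The only point requiring a little care is the bookkeeping between ordered and unordered pairs when relating $\mathbf{1}^\top D(G)\mathbf{1}$ and $\mathbf{1}^\top T(G)\mathbf{1}$ to $2\sigma(G)$.
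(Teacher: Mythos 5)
Your proposal is correct and follows essentially the same route as the paper: both use the normalized all-ones vector as a Rayleigh test vector to obtain $\mu_{\alpha}(G)\ge \frac{2\sigma(G)}{n}$, and both characterize equality by observing that $\mathbf{1}$ is the distance $\alpha$-Perron vector exactly when every row sum of $D_{\alpha}(G)$, namely $T_G(u)$, is the same. The only cosmetic difference is that you split the quadratic form into the $T(G)$ and $D(G)$ contributions, while the paper evaluates the pairwise sum directly.
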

\begin{proof}  Let $x=\frac{1}{\sqrt{n}}(1,1,\dots,1)^\top$. Obviously, $xx^\top=1$. Then
\[
\mu_{\alpha}(G)\ge x^\top D_{\alpha}(G)x
=\sum_{\{u,v\}\subseteq V(G)}d_G(u,v)\left(\alpha\left(x_u^2+x_v^2\right)+2(1-\alpha)x_u x_v\right)
=\frac{2\sigma (G)}{n}.
\]
Equality holds if and only if $x$ is the distance $\alpha$-Perron vector of $G$, equivalently,
\[
\mu_{\alpha}(G)x_u=\alpha T_G(u)x_u+(1-\alpha)\sum_{v\in V(G)}d_G(u,v)x_v=T_G(u)x_u \mbox{ for } u\in V(G),
\]
i.e., $T_G(u)=\mu_{\alpha}(G)$ for $u\in V(G)$. 
\end{proof}

Let $J_{s\times t}$ be the $s\times t$ matrix of all $1$'s,
 $0_{s\times t}$ the $s\times t$ matrix of all $0$'s, and $I_s$  the identity matrix of order $s$.

Let $K_n$, $P_n$, and $S_n$  be the complete graph, the path, and the star on $n$ vertices, respectively.

\section{Bounds for the distance $\alpha$-spectral radius}

In this section, we give some sharp  bounds for the distance $\alpha$-spectral radius, some of which may serve as a gentle warm-up exercise.

Note that  $D_{\alpha}(K_n)=\alpha(n-1) I_n+(1-\alpha)(J_{n\times n}-I_n)$, and thus $\mu_{\alpha}(K_n)=n-1$. By Lemma~\ref{ad}, we have

\begin{Theorem}
Let $G$ be a connected graph of order $n$. Then
\[
\mu_{\alpha}(G)\ge n-1
\]
with equality if and only if $G\cong K_n$.
\end{Theorem}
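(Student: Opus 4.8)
The plan is to leverage the two facts already in hand: the explicit value $\mu_{\alpha}(K_n)=n-1$, computed just above from $D_{\alpha}(K_n)=\alpha(n-1)I_n+(1-\alpha)(J_{n\times n}-I_n)$, together with the monotonicity recorded in Lemma~\ref{ad}, which says that adding an edge to a connected graph strictly decreases the distance $\alpha$-spectral radius. The strategy is simply to interpolate between an arbitrary connected $G$ and the complete graph $K_n$ on the same vertex set by inserting the missing edges one at a time, so that $K_n$ acts as the unique ``most edge-dense'' minimizer.

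Concretely, fix a connected graph $G$ of order $n$ and list its non-edges as $e_1,\dots,e_m$, where $m=\binom{n}{2}-|E(G)|$. Setting $G_0=G$ and $G_{i}=G_{i-1}+e_i$ for $i=1,\dots,m$, one obtains a chain $G=G_0\subset G_1\subset\cdots\subset G_m=K_n$. Each $G_{i-1}$ contains the connected spanning subgraph $G$, hence is itself connected, and the two endpoints of $e_i$ are non-adjacent in $G_{i-1}$ by construction; thus the hypotheses of Lemma~\ref{ad} are met at every step. Applying Lemma~\ref{ad} along the chain yields the strict sequence $\mu_{\alpha}(G_0)>\mu_{\alpha}(G_1)>\cdots>\mu_{\alpha}(G_m)=n-1$. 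Consequently, if $G\cong K_n$ then $m=0$ and $\mu_{\alpha}(G)=n-1$, while if $G\not\cong K_n$ then $m\ge 1$ and $\mu_{\alpha}(G)=\mu_{\alpha}(G_0)>n-1$, which together give $\mu_{\alpha}(G)\ge n-1$ with equality precisely when $G\cong K_n$.

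I do not anticipate any genuine obstacle, which is consistent with the authors flagging this as a warm-up and prefacing it with ``By Lemma~\ref{ad}.'' The only conceptual subtlety is that adding an edge alters every pairwise distance and every vertex transmission, so $D_{\alpha}$ changes in a nonobvious way; but this is exactly what Lemma~\ref{ad} (and behind it the Perron--Frobenius comparison in Lemma~\ref{non}) already absorbs, so nothing further is required. The one point meriting a moment's care is verifying that each intermediate graph $G_i$ stays connected, which is immediate since every $G_i$ is a supergraph of the connected graph $G$ on the same vertex set.
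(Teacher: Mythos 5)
Your proof is correct and is essentially the paper's own argument: the paper computes $\mu_{\alpha}(K_n)=n-1$ from the explicit form of $D_{\alpha}(K_n)$ and then invokes Lemma~\ref{ad} exactly as you do, with the edge-by-edge chain from $G$ up to $K_n$ being the implicit content of the phrase ``By Lemma~\ref{ad}.'' You have merely written out the routine induction and the connectivity check that the authors leave tacit.
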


If  $(d_1,\dots, d_n)$ is the nonincreasing degree sequence of a graph $G$ of order at least $2$, then $d_1$ (resp. $d_2$) is the maximum (resp. second maximum) degree,  $d_n$ (resp. $d_{n-1}$) is the minimum (resp. second minimum) degree of $G$. The diameter of $G$ is the maximum distance between all vertex pairs of $G$.

We use the techniques from \cite{ZI}.

\begin{Theorem} \label{low}
Let $G$ be a connected graph of order $n\ge 2$ with maximum degree $\Delta$ and second maximum degree $\Delta'$. Then
\begin{eqnarray*}
\mu_{\alpha}(G) &\ge & \frac{1}{2}\left(
\vphantom{+\sqrt{\alpha^2(4n-4-\Delta-\Delta')^2-4(2\alpha-1)(2n-2-\Delta)(2n-2-\Delta')}}
\alpha(4n-4-\Delta-\Delta') \right.\\
&& \left.+\sqrt{\alpha^2(4n-4-\Delta-\Delta')^2-4(2\alpha-1)(2n-2-\Delta)(2n-2-\Delta')}\right)
\end{eqnarray*}
with equality if and only if $G$ is regular with diameter at most $2$.
\end{Theorem}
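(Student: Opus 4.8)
The plan is to recognize the right-hand side as the larger root of the quadratic $t^{2}-\alpha(a+b)t+(2\alpha-1)ab=0$, where $a=2n-2-\Delta$ and $b=2n-2-\Delta'$ (so that $a+b=4n-4-\Delta-\Delta'$). A direct check of trace and determinant shows that this root is exactly $\rho(B)$ for the nonnegative irreducible matrix
\[
B=\begin{pmatrix}\alpha a & (1-\alpha)a\\ (1-\alpha)b & \alpha b\end{pmatrix},
\]
since $\operatorname{tr}B=\alpha(a+b)$ and $\det B=(2\alpha-1)ab$. Thus it suffices to prove $\mu_{\alpha}(G)\ge\rho(B)$. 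I would do this by a subinvariance (Collatz--Wielandt) argument: produce a positive vector $\xi$ on $V(G)$ with $D_{\alpha}(G)\xi\ge\rho(B)\xi$ entrywise; pairing this with the positive Perron vector $x$ of the symmetric matrix $D_{\alpha}(G)$ gives $\mu_{\alpha}(G)\,x^{\top}\xi=x^{\top}D_{\alpha}(G)\xi\ge\rho(B)\,x^{\top}\xi$, whence $\mu_{\alpha}(G)\ge\rho(B)$ because $x^{\top}\xi>0$. Concretely, fix a vertex $v_{1}$ of maximum degree $\Delta$, let $(\xi_{1},\xi_{2})^{\top}>0$ be the Perron vector of $B$, and set $\xi_{v_{1}}=\xi_{1}$, $\xi_{u}=\xi_{2}$ for $u\ne v_{1}$. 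From the first row of $B$ and $a\le b$ one checks $\rho(B)\ge a$ and hence $\xi_{1}\le\xi_{2}$.

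The heart of the argument is the vertexwise check of $D_{\alpha}(G)\xi\ge\rho(B)\xi$. At $v_{1}$ it is immediate: using $T_{G}(v_{1})\ge 2n-2-\Delta=a$ one gets $(D_{\alpha}(G)\xi)_{v_{1}}=T_{G}(v_{1})\bigl(\alpha\xi_{1}+(1-\alpha)\xi_{2}\bigr)\ge a\bigl(\alpha\xi_{1}+(1-\alpha)\xi_{2}\bigr)=(B\xi)_{1}=\rho(B)\xi_{1}$. For $u\ne v_{1}$ a short computation yields
\[
(D_{\alpha}(G)\xi)_{u}=T_{G}(u)\,\xi_{2}+(1-\alpha)\,d_{G}(u,v_{1})(\xi_{1}-\xi_{2}).
\]
Here is the main obstacle: the last term is negative and its factor $d_{G}(u,v_{1})$ is not controlled by the local degree data, so a careless estimate loses too much. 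I would resolve it using two facts at once. First, $d_{G}(u,v_{1})\le n-1\le b$, so that $(1-\alpha)(d_{G}(u,v_1)-b)\xi_1\ge(1-\alpha)(d_{G}(u,v_1)-b)\xi_2$ (legitimate since $\xi_{1}\le\xi_{2}$ and the bracket is nonpositive); after substituting $\rho(B)\xi_{2}=(B\xi)_{2}=(1-\alpha)b\xi_{1}+\alpha b\xi_{2}$ the whole expression collapses to $(D_{\alpha}(G)\xi)_{u}-\rho(B)\xi_{2}\ge\bigl(T_{G}(u)-b\bigr)\xi_{2}$. Second, since $u\ne v_{1}$ has $\deg_{G}(u)\le\Delta'$, we have $T_{G}(u)\ge 2n-2-\Delta'=b$, so the right side is $\ge 0$ and $(D_{\alpha}(G)\xi)_{u}\ge\rho(B)\xi_{u}$, finishing the subinvariance inequality.

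For the equality characterization, irreducibility of $D_{\alpha}(G)$ forces $D_{\alpha}(G)\xi=\rho(B)\xi$ exactly whenever $\mu_{\alpha}(G)=\rho(B)$. Tracing the inequalities back, equality at $v_{1}$ needs $T_{G}(v_{1})=a$, and equality at each $u\ne v_{1}$ needs $T_{G}(u)=b$, which (combined with $\deg_{G}(u)\le\Delta'$) forces $\deg_{G}(u)=\Delta'$ and makes every eccentricity at most $2$, i.e.\ $G$ has diameter at most $2$. If $\Delta>\Delta'$ then $\xi_{1}<\xi_{2}$, so equality in the negative-term step above additionally demands $d_{G}(u,v_{1})=b\ge n-1>2$ for $u\ne v_{1}$, contradicting the diameter bound when $n\ge 4$; the cases $n\le 3$ are settled by inspection. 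Hence $\Delta=\Delta'$ and $G$ is regular with diameter at most $2$. Conversely, any such $G$ is transmission regular, so equality holds by Lemma~\ref{xz}. The delicate point throughout is taming the uncontrolled distance term $d_{G}(u,v_{1})$, both to secure the inequality and to pin down the extremal graphs.
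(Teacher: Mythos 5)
Your proof is correct, but it takes a genuinely different route from the paper's. The paper works with the actual distance $\alpha$-Perron vector $x$: taking $u$ and $v$ to be the vertices with the smallest and second smallest entries, it bounds the eigenequations at $u$ and $v$ from below, multiplies the two resulting inequalities to get $\mu_{\alpha}^2(G)-\alpha(T_G(u)+T_G(v))\mu_{\alpha}(G)+(2\alpha-1)T_G(u)T_G(v)\ge 0$, concludes $\mu_{\alpha}(G)\ge f(T_G(u),T_G(v))$ for the larger root $f(s,t)$ of the associated quadratic, and then pushes $T_G(u),T_G(v)$ down to $2n-2-\Delta$ and $2n-2-\Delta'$ using the monotonicity of $f$. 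You instead package the same quadratic as the characteristic polynomial of an explicit $2\times 2$ matrix $B$ and prove the subinvariance inequality $D_{\alpha}(G)\xi\ge\rho(B)\xi$ for a two-valued test vector built from the Perron vector of $B$; the two arguments are dual to one another, since the paper's $f(s,t)$ is exactly $\rho$ of your $B$ with $(s,t)=(a,b)$. What each buys: the paper's version avoids your main technical obstacle (the uncontrolled term $(1-\alpha)d_G(u,v_1)(\xi_1-\xi_2)$, which you correctly tame via $d_G(u,v_1)\le n-1\le b$ and $\xi_1\le\xi_2$) because it never has to evaluate $D_{\alpha}(G)$ against an artificial vector, but it does rely on the unproved claim that $f$ is increasing in each argument and on a rather terse equality analysis ("all entries of $x$ are equal"). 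Your version makes the equality analysis fully mechanical --- positivity of $x$ forces $D_{\alpha}(G)\xi=\rho(B)\xi$ entrywise, and tracing each inequality pins down $T_G(v_1)=2n-2-\Delta$, $T_G(u)=2n-2-\Delta'$, $d_G(u)=\Delta'$, eccentricities at most $2$, and (via the $d_G(u,v_1)=b$ contradiction for $n\ge 4$) $\Delta=\Delta'$ --- at the cost of the extra small-$n$ inspection. I checked the details ($\operatorname{tr}B=\alpha(a+b)$, $\det B=(2\alpha-1)ab$, $a\le\rho(B)\le b$ hence $\xi_1\le\xi_2$, the collapse to $(T_G(u)-b)\xi_2\ge 0$, and the converse via transmission regularity) and they are all sound.
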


\begin{proof}
Let $x$ be the distance $\alpha$-Perron vector of $G$.  Let
\[
x_u=\min\{x_w: w\in V(G)\}  \mbox{ and } x_v=\min\{x_w: w\in V(G)\setminus\{u\}\}.
\]
From the $\alpha$-eigenequations of $G$ at $u$ and $v$, we have
\begin{eqnarray*}
(\mu_{\alpha}(G)-\alpha T_G(u))x_u &=& (1-\alpha)\sum_{w\in V(G)}d_G(u,w)x_w\\
& \ge & (1-\alpha)\sum_{w\in V(G)}d_G(u,w)x_v\\\\
&=& (1-\alpha) T_G(u)x_v
\end{eqnarray*}
and
\begin{eqnarray*}
(\mu_{\alpha}(G)-\alpha T_G(v))x_v &=& (1-\alpha)\sum_{w\in V(G)}d_G(v,w)x_w\\
& \ge & (1-\alpha)\sum_{w\in V(G)}d_G(v,w)x_u\\\\
&=& (1-\alpha) T_G(v)x_u.
\end{eqnarray*}
Thus
\[
(\mu_{\alpha}(G)-\alpha T_G(u))(\mu_{\alpha}(G)-\alpha T_G(v))\ge (1-\alpha)^2 T_G(u)  T_G(v),
\]
i.e.,
\[
\mu_{\alpha}^2(G)-\alpha(T_G(u)+T_G(v))\mu_{\alpha}(G)+(2\alpha-1)T_G(u)T_G(v)\ge 0.
\]
Note that $\mu_{\alpha}(G)>\alpha T_G(u)$, $\mu_{\alpha}(G)>\alpha T_G(v)$, and then
$\mu_{\alpha}(G)> \frac{\alpha(T_G(u)+T_G(v))}{2}$.
Thus
\[
\mu_{\alpha}(G)\ge f(T_G(u),T_G(v))
\]
with
\[
f(s,t)=\frac{\alpha(s+t)+\sqrt{\alpha^2(s+t)^2-4(2\alpha-1)st}}{2}.
\]

It is easily seen that
$T_G(u)\ge d_G(u)+ 2\cdot (n-1-d_G(u))=2n-2-d_G(u)$. Similarly,  $T_G(v)\ge 2n-2-d_G(v)$.
Assume that $d_G(u)\ge d_G(v)$. Then
\[
T_G(u)\ge 2n-2-\Delta \mbox{ and }  T_G(v)\ge 2n-2-\Delta'.
\]
Obviously, $f(s,t)$ is strictly increasing for $s, t\ge 1$. Thus
\[
\mu_{\alpha}(G)\ge f(2n-2-\Delta,2n-2-\Delta')£¬
\]
as desired.

Suppose that the lower bound for  $\mu_{\alpha}(G)$ is attained. Then all entries of $x$ are equal to $x_u$ or $x_v$, and hence are the same. Therefore all transmissions are equal, and the diameter $d$ is at most $2$. If $d=1$, then $G$ is complete. If $d=2$, then $\mu_{\alpha}(G)=T_G(w)=2n-2-d_w$ for $w\in V(G)$, and thus $G$ is regular.

Conversely, if $G$ is regular with diameter at most $2$, then $T_G(w)=2n-2-d_G(w)$ for $w\in V(G)$, and thus the lower bound for $\mu_{\alpha}(G)$ is attained.
\end{proof}

\begin{Theorem} \label{up}
Let $G$ be a connected graph of order  $n\ge 2$ with minimum degree $\delta$ and second minimum degree $\delta'$. Let $d$ be the diameter of $G$. Let
$S=dn-\frac{d(d-1)}{2}-1-\delta (d-1)$ and $S'=dn-\frac{d(d-1)}{2}-1-\delta' (d-1)$.
Then
\begin{eqnarray*}
\mu_{\alpha}(G) &\le & \frac{1}{2}\large\left(\vphantom{\sqrt{\alpha^2}} \alpha(2dn-2-(d-1)(d+\delta+\delta')) \right.\\
&& \left.+\sqrt{\alpha^2(2dn-2-(d-1)(d+\delta+\delta'))^2-4(2\alpha-1) S S'}\right)
\end{eqnarray*}
with equality if and only if $G$ is regular with $d\le 2$.
\end{Theorem}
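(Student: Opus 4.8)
The plan is to mirror the proof of Theorem~\ref{low}, but now anchoring the argument at the two \emph{largest} entries of the distance $\alpha$-Perron vector and replacing the lower transmission estimates by upper ones. Let $x$ be the distance $\alpha$-Perron vector of $G$, and choose $u,v\in V(G)$ with $x_u=\max\{x_w:w\in V(G)\}$ and $x_v=\max\{x_w:w\in V(G)\setminus\{u\}\}$. Reading off the $\alpha$-eigenequations at $u$ and $v$ and using $x_w\le x_v$ for $w\neq u$ and $x_w\le x_u$ for $w\neq v$, I would obtain
\[
(\mu_{\alpha}(G)-\alpha T_G(u))x_u\le (1-\alpha)T_G(u)x_v,\qquad (\mu_{\alpha}(G)-\alpha T_G(v))x_v\le (1-\alpha)T_G(v)x_u.
\]
Since $\mu_{\alpha}(G)>\alpha T_G(w)$ for every $w$, all quantities are positive, so multiplying the two inequalities and cancelling $x_ux_v$ yields exactly the reverse of the quadratic in Theorem~\ref{low}, namely
\[
\mu_{\alpha}^2(G)-\alpha(T_G(u)+T_G(v))\mu_{\alpha}(G)+(2\alpha-1)T_G(u)T_G(v)\le 0.
\]
Hence $\mu_{\alpha}(G)\le f(T_G(u),T_G(v))$, where $f(s,t)=\tfrac12\big(\alpha(s+t)+\sqrt{\alpha^2(s+t)^2-4(2\alpha-1)st}\big)$ is the same function, which is strictly increasing for $s,t\ge 1$.

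Next I would bound the transmissions from above. Partitioning $V(G)$ by distance from a fixed vertex $w$ and noting that $d_G(w)$ vertices lie at distance $1$, at least one vertex lies at each distance $2,\dots,d-1$, and the remaining vertices lie at distance at most $d$, a short computation gives $T_G(w)\le dn-\tfrac{d(d-1)}{2}-1-(d-1)d_G(w)$, with equality exactly when the distances from $w$ realize this extremal profile. This bound is decreasing in $d_G(w)$, so $T_G(w)\le S$ for every $w$. The key point is to recover $S'$: if $\delta<\delta'$ then there is a unique vertex of minimum degree, so among the two distinct vertices $u,v$ at most one has degree $\delta$ while the other has degree at least $\delta'$; thus, using the symmetry of $f$, $T_G(u)\le S$ and $T_G(v)\le S'$, whereas if $\delta=\delta'$ then $S=S'$ and both are at most $S$. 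Since $f$ is increasing in each argument, $\mu_{\alpha}(G)\le f(T_G(u),T_G(v))\le f(S,S')$, and the direct check $S+S'=2dn-2-(d-1)(d+\delta+\delta')$ shows $f(S,S')$ is precisely the claimed bound.

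For equality I would argue that both inequalities must be tight. Equality in the product step forces $x_w=x_v$ for all $w\neq u$ and $x_w=x_u$ for all $w\neq v$, i.e.\ $x$ is constant; feeding a constant vector into the $\alpha$-eigenequation then gives $T_G(w)=\mu_{\alpha}(G)$ for all $w$, so $G$ is transmission regular and in particular $T_G(u)=T_G(v)$. Equality in $f(T_G(u),T_G(v))\le f(S,S')$, together with strict monotonicity of $f$, forces the multiset $\{T_G(u),T_G(v)\}$ to equal $\{S,S'\}$, whence $S=S'$ and so $d=1$ or $\delta=\delta'$. When $d=1$ we get $G\cong K_n$. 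Otherwise $\delta=\delta'$, so $T_G(w)=\mu_{\alpha}(G)=f(S,S')=S$ for all $w$; writing the transmission bound as $T_G(w)\le S-(d-1)(d_G(w)-\delta)$ and using $d\ge 2$ and $d_G(w)\ge\delta$ then forces $d_G(w)=\delta$, i.e.\ $G$ is regular, and moreover each vertex attains the extremal distance profile.

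The main obstacle is the final step of the equality analysis: ruling out $d\ge 3$. Here I would exploit that the extremal profile forces $|N_G^{(2)}(w)|=1$ for every vertex $w$, where $N_G^{(i)}(w)$ denotes the set of vertices at distance $i$ from $w$. If $d\ge 3$, then for a vertex $w$ having a vertex at distance at least $3$, every shortest path from $w$ to such a vertex must pass through the unique vertex $z$ at distance $2$ from $w$, so $z$ is a cut vertex separating $\{w\}\cup N_G(w)$ from the farther vertices; I expect to derive a contradiction with $G$ being a connected $\delta$-regular graph carrying this single-distance-$2$-vertex condition at every vertex, which forces $d\le 2$. The converse is routine: a regular graph of diameter at most $2$ satisfies $T_G(w)=2n-2-d_G(w)$ for all $w$ (and $T_G(w)=n-1$ when $d=1$), is therefore transmission regular, and a direct substitution confirms $\mu_{\alpha}(G)=f(S,S')$, completing the characterization.
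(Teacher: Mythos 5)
Your derivation of the inequality itself reproduces the paper's proof essentially line for line: the same choice of $u$ and $v$ as the vertices carrying the largest and second largest entries of the Perron vector, the same two eigenequation estimates, the same quadratic $\mu_{\alpha}^2(G)-\alpha(T_G(u)+T_G(v))\mu_{\alpha}(G)+(2\alpha-1)T_G(u)T_G(v)\le 0$, the same function $f$, and the same transmission bound $T_G(w)\le dn-\tfrac{d(d-1)}{2}-1-(d-1)d_G(w)$ combined with the observation that at most one of the two distinct vertices $u,v$ can have degree below $\delta'$. The first part of your equality analysis (constant Perron vector, transmission regularity, $\{T_G(u),T_G(v)\}=\{S,S'\}$, hence regularity and the extremal distance profile at every vertex) also matches the paper. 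All of that is correct.

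The genuine gap is precisely the step you flag as the main obstacle: ruling out $d\ge 3$. ``I expect to derive a contradiction'' is not an argument, and the route you sketch does not close on its own: knowing that the unique vertex $z$ at distance $2$ from $w$ is a cut vertex does not contradict $G$ being a connected $\delta$-regular graph, since regular graphs can perfectly well have cut vertices; some additional counting is unavoidable. The paper finishes as follows: because every vertex has exactly one vertex at distance $2$, a diametral path $v_0v_1\dots v_d$ with $d\ge 4$ would put both $v_0$ and $v_4$ at distance $2$ from $v_2$, so $d=3$; then equating the transmission of a vertex of eccentricity $2$ with the common value $3n-\tfrac{3\cdot 2}{2}-1-2\delta$ gives $\delta\ge n-2$, and for a diametral path $v_0v_1v_2v_3$ the endpoints $v_0$ and $v_3$ would then have to be adjacent to all vertices outside the path, forcing $d=2$, a contradiction. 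You need to supply an argument of this kind (or actually carry out the counting hinted at in your cut-vertex picture) before the equality characterization is complete; the rest of the proposal is sound.
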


\begin{proof}
Let $x$ be the distance $\alpha$-Perron vector of $G$.  Let
\[
x_u=\max\{x_w: w\in V(G)\}  \mbox{ and } x_v=\max\{x_w: w\in V(G)\setminus\{u\}\}.
\]
From the $\alpha$-eigenequations of $G$ at $u$ and $v$, we have
\begin{eqnarray*}
(\mu_{\alpha}(G)-\alpha T_G(u))x_u &=& (1-\alpha)\sum_{w\in V(G)}d_G(u,w)x_w\\
& \le & (1-\alpha)\sum_{w\in V(G)}d_G(u,w)x_v\\
&=& (1-\alpha) T_G(u)x_v
\end{eqnarray*}
and
\begin{eqnarray*}
(\mu_{\alpha}(G)-\alpha T_G(v))x_v &=& (1-\alpha)\sum_{w\in V(G)}d_G(v,w)x_w\\
& \le & (1-\alpha)\sum_{w\in V(G)}d_G(v,w)x_u\\\\
&=& (1-\alpha) T_G(v)x_u.
\end{eqnarray*}
Thus we have 
\[
\mu_{\alpha}^2(G)-\alpha(T_G(u)+T_G(v))\mu_{\alpha}(G)+(2\alpha-1)T_G(u)T_G(v)\le 0.
\]
Thus
\[
\mu_{\alpha}(G)\le f(T_G(u),T_G(v))
\]
with
\[
f(s,t)=\frac{\alpha(s+t)+\sqrt{\alpha^2(s+t)^2-4(2\alpha-1)st}}{2}.
\]
Assume that $d_G(u)\le d_G(v)$. Note that
\begin{eqnarray*}
T_G(u) &\le & d_G(u)+\sum_{i=2}^{d-1}i+d\left(n-1-d_G(u)-\sum_{i=2}^{d-1}1\right)\\
&=&dn-\frac{d(d-1)}{2}-1-d_G(u)(d-1)\\
&\le &  dn-\frac{d(d-1)}{2}-1-\delta (d-1)
\end{eqnarray*}
and similarly,
\[
T_G(v)\le dn-\frac{d(d-1)}{2}-1-\delta'(d-1).
\]
Since $f(s,t)$ is strictly increasing for $s, t\ge 1$, we have
\[
\mu_{\alpha}(G)\le f\left(dn-\frac{d(d-1)}{2}-1-\delta (d-1),dn-\frac{d(d-1)}{2}-1-\delta' (d-1)\right),
\]
as desired.

Suppose the upper bound for $\mu_{\alpha}(G)$ is attained. Then
 all entries of $x$ are equal, and thus all transmissions are equal. If $d\ge 3$, then from the the above argument, for every vertex $w$, there is exactly one vertex $w'$ with $d_G(w,w')=2$, and thus $d=3$, and for a vertex $z$ of eccentricity $2$,
 \[
d_G(z)+(n-1-d_G(z))\cdot 2=T_G(z)=3n-\frac{3\times(3-1)}{2}-1-(3-1)\delta,
 \]
 implying that $\delta\ge n-2$. Obviously, $G\not\cong P_4$. For a diametrical path $P=v_0v_1v_2v_3$, $v_0$ and $v_3$ should be adjacent to all vertices outside $P$, implying that $d=2$, a contradiction. Therefore $G$ is regular with $d\le 2$.

 Conversely, if $G$ is regular with $d\le 2$, then $T_G(w)=2n-2-d_G(w)$ for $w\in V(G)$, and thus the upper bound for $\mu_{\alpha}(G)$ is attained.
\end{proof}

 For an $n\times n$ nonnegative matrix $A=(a_{ij})$, let $r_i$ be the $i$-th row sum of $A$, i.e.,  $r_i=\sum_{j=1}^n a_{ij}$ for $i=1, \dots, n$, and let $r_{\min}$ and $r_{\max}$ be the minimum and maximum row sums of $A$, respectively.

\begin{Lemma}\label{ela} \cite{AP} Let $A=(a_{ij})$ be an $n\times n$ nonnegative matrix with row sums $r_1,\dots,r_n$. Let $S=\{1,\dots,n\}$, $r_{\min}=r_p$, $r_{\max}=r_q$ for some $p$ and $q$ with $1 \le p, q\le n $, $\ell=\max\{r_i-a_{ip}: i\in S\setminus\{p\}\}$, $m=\min\{r_i-a_{iq}: i\in S\setminus\{q\}\}$, $s=\max\{a_{ip}: i\in S\setminus\{p\}\}$ and $t=\min\{a_{iq}: i\in S\setminus\{q\}\}$. Then
\begin{eqnarray*}
&&\frac{a_{qq}+m+\sqrt{(m-a_{qq})^2+4t(r_{\max}-a_{qq})}}{2}\\
&\le& \rho(A)\\
&\le&\frac{a_{pp}+\ell+\sqrt{(\ell-a_{pp})^2+4s(r_{\min}-a_{pp})}}{2}.
\end{eqnarray*}
Moreover, the first equality holds if $r_i-a_{iq}=m$ and $a_{iq}=t$ for all $i\in S\setminus\{q\}$, and the second equality holds if $r_i-a_{ip}=\ell$ and $a_{ip}=s$ for all $i\in S\setminus\{p\}$.
\end{Lemma}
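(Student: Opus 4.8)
The plan is to derive both bounds from the Collatz--Wielandt estimates for the spectral radius of a nonnegative matrix: for a nonnegative $n\times n$ matrix $A$ and any positive vector $x=(x_1,\dots,x_n)^\top$ one has
\[
\min_{i\in S}\frac{(Ax)_i}{x_i}\le \rho(A)\le \max_{i\in S}\frac{(Ax)_i}{x_i}.
\]
The idea is to feed in a two-valued test vector that singles out the extremal column ($p$ for the upper bound, $q$ for the lower bound) and then optimize over the single free parameter, so that the optimization condition reproduces exactly the quadratic whose larger root appears in the statement.

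For the upper bound I would take $x$ with $x_p=y$ for a parameter $y>0$ and $x_i=1$ for every $i\in S\setminus\{p\}$. Computing the row ratios gives
\[
\frac{(Ax)_p}{x_p}=a_{pp}+\frac{r_{\min}-a_{pp}}{y},\qquad \frac{(Ax)_i}{x_i}=a_{ip}y+(r_i-a_{ip})\le sy+\ell\quad(i\ne p),
\]
where the last inequality uses $a_{ip}\le s$ and $r_i-a_{ip}\le \ell$ together with $y\ge 0$. Hence $\rho(A)\le\max\{a_{pp}+(r_{\min}-a_{pp})/y,\ sy+\ell\}$ for every $y>0$. Since the first term is decreasing and the second increasing in $y$, the best (smallest) upper bound is obtained by equating them, and eliminating $y$ yields $(\lambda-a_{pp})(\lambda-\ell)=s(r_{\min}-a_{pp})$, whose larger root is precisely the claimed upper bound. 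The lower bound is entirely symmetric: take $x_q=y$, $x_i=1$ for $i\ne q$, apply the $\min$ half of Collatz--Wielandt, use the reversed estimates $a_{iq}\ge t$ and $r_i-a_{iq}\ge m$, and optimize $y$ to arrive at the larger root of $(\lambda-a_{qq})(\lambda-m)=t(r_{\max}-a_{qq})$.

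For the equality assertions I would observe that the only slack introduced is in the two crude estimates $a_{ip}\le s$ and $r_i-a_{ip}\le\ell$ (respectively $a_{iq}\ge t$ and $r_i-a_{iq}\ge m$). If $r_i-a_{iq}=m$ and $a_{iq}=t$ for all $i\ne q$, then at the optimal $y$ all the ratios $(Ax)_i/x_i$ collapse to the single value $\lambda$ given by the lower-bound root, so $Ax=\lambda x$ with $x>0$, which forces $\rho(A)=\lambda$; the second equality condition follows the same way.

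The hard part will not be the algebra but the boundary bookkeeping: one must check that the optimal $y$ is genuinely positive so that the test vector is admissible. This rests on $r_{\min}\ge a_{pp}$ (immediate from nonnegativity of the entries, since $r_p=\sum_j a_{pj}\ge a_{pp}$) and on the larger root exceeding $\ell$, which holds because $s(r_{\min}-a_{pp})\ge 0$. I would also dispose of the degenerate cases $s=0$ (respectively $t=0$), where the balancing value of $y$ runs off to infinity, by a short limiting argument showing that the closed-form bound still reads off correctly as $\max\{a_{pp},\ell\}$ (respectively $\min\{a_{qq},m\}$).
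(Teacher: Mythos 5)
The paper does not prove this lemma at all: it is imported verbatim from reference [AP] (Atik--Panigrahi), so there is no in-paper argument to compare against. Your proof is correct and is, in substance, the standard one: your two-valued Collatz--Wielandt test vector $x$ with $x_p=y$, $x_i=1$ ($i\neq p$) is exactly the diagonal similarity $D^{-1}AD$ with $D=\mathrm{diag}(1,\dots,y,\dots,1)$ used in [AP], followed by bounding $\rho$ between the minimum and maximum row sums of the scaled matrix and balancing the two competing row-sum estimates; eliminating $y$ gives precisely the quadratics $(\lambda-a_{pp})(\lambda-\ell)=s(r_{\min}-a_{pp})$ and $(\lambda-a_{qq})(\lambda-m)=t(r_{\max}-a_{qq})$ whose larger roots appear in the statement, and your observation that under the stated equality hypotheses all ratios $(Ax)_i/x_i$ coincide, forcing $Ax=\lambda x$ with $x>0$ and hence $\rho(A)=\lambda$, correctly yields the (one-directional, ``if'') equality assertions. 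The only points needing the care you already flag are the degenerate cases $s=0$, $t=0$, $r_{\min}=a_{pp}$, $r_{\max}=a_{qq}$, where the balancing $y$ degenerates and one argues by letting $y\to 0^{+}$ or $y\to\infty$, recovering $\max\{a_{pp},\ell\}$ and $\max\{a_{qq},m\}$ from the closed forms; with that bookkeeping supplied, the proof is complete.
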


 A connected graph $G$ on $n$ vertices is  distinguished vertex deleted regular (DVDR)  if there is a vertex $v$  of degree $n-1$  such that  $G-v$ is regular.

\begin{Lemma} \label{DVDR} \cite{AP} Let $G$ be a non-complete connected graph of order $n$. Then $G$ is a DVDR graph if and only if
each vertex of $G$ except one vertex $v$ of degree $n-1$ has the same transmission.
\end{Lemma}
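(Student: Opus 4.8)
The plan is to exploit the fact that a vertex of degree $n-1$ dominates the entire graph, forcing the diameter to be at most $2$, which in turn pins down every transmission in terms of a single degree. First I would record that if $v$ has degree $n-1$, then $v$ is adjacent to all other vertices, so any two vertices are joined through $v$ and hence $d_G(x,y)\le 2$ for all $x,y$. Consequently, for each $w\neq v$ the neighbours of $w$ lie at distance $1$ and the remaining non-neighbours at distance $2$, which gives the clean identity $T_G(w)=d_G(w)+2(n-1-d_G(w))=2n-2-d_G(w)$. This single identity is really the engine of both implications.

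For the forward implication, suppose $G$ is DVDR with distinguished vertex $v$ of degree $n-1$, so that $G-v$ is $r$-regular for some $r$. Then every $w\neq v$ has degree $r+1$ in $G$ (its $r$ neighbours inside $G-v$ together with $v$), and the transmission identity immediately yields $T_G(w)=2n-3-r$, a constant independent of $w$. Thus all vertices other than $v$ share the same transmission.

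For the converse, I would assume $v$ has degree $n-1$ and that every other vertex has a common transmission $T$. Using the identity above, $2n-2-d_G(w)=T$ for every $w\neq v$, whence $d_G(w)=2n-2-T$ is the same for all such $w$; call this common value $d$. Deleting $v$ removes exactly one edge incident to each $w\neq v$, so every vertex of $G-v$ has degree $d-1$, i.e.\ $G-v$ is regular, and therefore $G$ is DVDR.

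I expect no serious obstacle; the only point requiring care is to confirm that the distinguished vertex in the DVDR definition coincides with the ``excepted'' vertex in the transmission condition, and that in the non-complete case this vertex genuinely has a transmission different from the rest. Indeed $T_G(v)=n-1$, while the common value of the others is $2n-2-d$, and these agree only when $d=n-1$, forcing $G\cong K_n$, which is excluded by hypothesis. This also shows $v$ is the unique vertex of degree $n-1$, so the excepted vertex is well defined.
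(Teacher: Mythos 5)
Your proof is correct. Note, however, that the paper does not prove this lemma at all: it is quoted from \cite{AP} as a known result, so there is no in-paper argument to compare against. Your route --- using that a vertex of degree $n-1$ forces diameter at most $2$, hence $T_G(w)=2n-2-d_G(w)$ for $w\neq v$ and $T_G(v)=n-1$, which makes both implications immediate and also shows the excepted vertex is genuinely distinguished exactly when $G$ is non-complete --- is the natural and complete argument.
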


For a connected graph $G$, let $T_{\min}(G)$ and $T_{\max}(G)$ be the minimum and maximum transmissions of $G$, respectively.
As in \cite{AP}, we have the following bounds. For completeness, we include a proof here.

\begin{Theorem} Let $G$ be a connected graph and $u$ and $v$ be vertices such that $T_G(u)=T_{\min}(G)$ and $T_G(v)=T_{\max}(G)$. Let $m_1=\max\{T_G(w)-(1-\alpha)d(u,w): w\in V(G)\setminus \{u\}\}$, $m_2=\min\{T_G(w)-(1-\alpha)d(v,w): w\in V(G)\setminus \{v\}\}$, and  $e(w)=\max\{d(w,z): z\in V(G)\}$ for $w\in V(G)$. Then
\begin{eqnarray*}
&&\frac{m_2+\alpha T_{\max}(G)+\sqrt{(m_2-\alpha T_{\max}(G))^2+4(1-\alpha)^2T_{\max}(G)}}{2}\\
&\le& \mu_{\alpha}(G)\\
&\le& \frac{m_1+\alpha T_{\min}(G)+\sqrt{(m_1-\alpha T_{\min}(G))^2+4(1-\alpha)^2e(u)T_{\min}(G)}}{2}.
\end{eqnarray*}
The first equality holds if and only if $G$ is a complete graph and the second equality holds if and only if $G$ is a DVDR  graph.
\end{Theorem}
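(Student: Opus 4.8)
The plan is to apply Lemma~\ref{ela} to the nonnegative irreducible matrix $A=D_\alpha(G)$ and then to analyze the two equality cases through the distance $\alpha$-Perron vector. First I would record that $A$ has off-diagonal entries $a_{wz}=(1-\alpha)d_G(w,z)$ and diagonal entries $a_{ww}=\alpha T_G(w)$, so the $w$-th row sum is $r_w=\alpha T_G(w)+(1-\alpha)T_G(w)=T_G(w)$. Hence $r_{\min}=T_{\min}(G)$ is attained at $u$ and $r_{\max}=T_{\max}(G)$ at $v$, so in the notation of Lemma~\ref{ela} we take $p=u$ and $q=v$. Reading off the remaining quantities gives $\ell=m_1$, $m=m_2$, $s=(1-\alpha)\max_{w\ne u}d_G(w,u)=(1-\alpha)e(u)$, and $t=(1-\alpha)\min_{w\ne v}d_G(w,v)=1-\alpha$, the last because every vertex of a connected graph has a neighbour. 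Substituting $a_{pp}=\alpha T_{\min}(G)$, $a_{qq}=\alpha T_{\max}(G)$, together with $r_{\max}-a_{qq}=(1-\alpha)T_{\max}(G)$ and $r_{\min}-a_{pp}=(1-\alpha)T_{\min}(G)$, into the two displayed bounds of Lemma~\ref{ela} yields exactly the claimed inequalities.

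Next I would settle the ``if'' directions using the sufficiency clauses of Lemma~\ref{ela}. If $G\cong K_n$ then $d_G(w,v)=1$ for every $w\ne v$, so $a_{wv}=1-\alpha=t$, while $r_w-a_{wv}=(n-1)-(1-\alpha)$ is constant in $w$ and hence equals $m$; thus the lower bound is attained. If $G$ is DVDR with the vertex $u$ of degree $n-1$, then $e(u)=1$, so $a_{wu}=1-\alpha=(1-\alpha)e(u)=s$ for all $w\ne u$, whereas $G-u$ being regular makes the diameter at most $2$ and forces $T_G(w)=2(n-1)-d_G(w)$ to be the same for all $w\ne u$, so $r_w-a_{wu}$ is constant and equal to $\ell$; thus the upper bound is attained.

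The essential work is the ``only if'' directions, where the sufficiency clauses of Lemma~\ref{ela} do not suffice and I would argue directly with the distance $\alpha$-Perron vector $x$. For the upper bound, retracing the proof of Lemma~\ref{ela} with $A=D_\alpha(G)$, equality forces the inequality $(\mu_\alpha(G)-\alpha T_{\min}(G))x_u\le(1-\alpha)T_{\min}(G)M$ coming from $u$ and the one coming from a maximum-entry vertex to be tight, where $M=\max_z x_z$; the first tightness makes $x_w=M$ for every $w\ne u$. Feeding $x_w=M$ $(w\ne u)$ into the $\alpha$-eigenequation at a vertex $w\ne u$ gives $T_G(w)=\mu_\alpha(G)+\lambda\,d_G(w,u)$ with $\lambda=(1-\alpha)(M-x_u)/M\ge0$, while the $\alpha$-eigenequation at $u$ gives $(\mu_\alpha(G)-\alpha T_{\min}(G))x_u=(1-\alpha)T_{\min}(G)M$. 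Using these to evaluate $m_1=\max_{w\ne u}\left(T_G(w)-(1-\alpha)d_G(w,u)\right)$ and substituting into the defining scalar equality $(\mu_\alpha(G)-m_1)(\mu_\alpha(G)-\alpha T_{\min}(G))=(1-\alpha)^2e(u)T_{\min}(G)$, the factors cancel and leave $e(u)=1$. Then $u$ has degree $n-1$, so $d_G(w,u)=1$ and $T_G(w)=\mu_\alpha(G)+\lambda$ is constant for $w\ne u$; by Lemma~\ref{DVDR} (or directly when $G$ is complete) $G$ is DVDR. The lower bound is the mirror image: equality forces $x_w$ constant for $w\ne v$, the eigenequations give $T_G(w)=\mu_\alpha(G)-\kappa\,d_G(w,v)$, and the analogous quadratic collapses to $e(v)=1$; since $v$ carries the maximum transmission, $e(v)=1$ forces every transmission to equal $n-1$, i.e. $G\cong K_n$. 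I expect this necessity analysis to be the main obstacle: one must extract from a single scalar equality the rigid Perron structure ``$x$ is constant off the extremal vertex'' and then show the quadratic forces eccentricity $1$, while also disposing of the degenerate case in which the extremal vertex itself carries a maximum (resp. minimum) Perron entry.
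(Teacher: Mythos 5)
Your proposal is correct, and it reaches the result by a genuinely different route from the paper, most visibly in the equality analysis. You apply Lemma~\ref{ela} directly to $A=D_\alpha(G)$ (the identifications $p=u$, $q=v$, $\ell=m_1$, $m=m_2$, $s=(1-\alpha)e(u)$, $t=1-\alpha$ are all right and reproduce the two displayed bounds), and then, since Lemma~\ref{ela} asserts only \emph{sufficient} conditions for equality, you extract necessity by reopening the eigenvector argument behind it: tightness of the eigenequation inequality at the extremal vertex forces $x$ to be constant on $V(G)\setminus\{u\}$ (resp.\ $V(G)\setminus\{v\}$) because every off-diagonal entry of that row is positive; the eigenequations then give $T_G(w)=\mu_\alpha(G)+\lambda d_G(w,u)$, and the quadratic equality cancels to $e(u)=1$ (resp.\ $e(v)=1$), whence DVDR via Lemma~\ref{DVDR} (resp.\ $K_n$). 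I verified the cancellation $(\mu_\alpha(G)-m_1)(\mu_\alpha(G)-\alpha T_{\min}(G))=(1-\alpha)^2T_{\min}(G)$; it works exactly as you describe. The paper never opens Lemma~\ref{ela}: it performs surgery on $D_\alpha(G)$ to build auxiliary matrices $D^{(1)}\le D_\alpha(G)\le D^{(2)}$ that satisfy the sufficiency clause of Lemma~\ref{ela} exactly, then invokes Lemma~\ref{non} to get $\rho(D^{(1)})\le\mu_\alpha(G)\le\rho(D^{(2)})$ with equality if and only if the matrices coincide, and reads the structural conclusions ($d_G(v)=n-1$, resp.\ $e(u)=1$ and equal transmissions off $u$) from that matrix identity. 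Your route is more self-contained at the Perron-vector level but obliges you to actually reprove the inequality of Lemma~\ref{ela} in this special case, since necessity cannot be quoted from its statement; the paper's route keeps Lemma~\ref{ela} a black box at the cost of the ad hoc matrix construction. Two small points to tidy: your $M$ should be $\max\{x_z:z\ne u\}$ rather than the global maximum (harmless, since the coefficient of $d_G(w,u)$ in $T_G(w)-(1-\alpha)d_G(w,u)$ equals $-(1-\alpha)x_u/M<0$ regardless of the sign of $\lambda$), and in the DVDR sufficiency direction you should note that the degree-$(n-1)$ vertex must coincide with $u$ because its transmission $n-1$ is the minimum possible in a connected graph of order $n$.
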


\begin{proof}  Let $M$ be the submatrix of $D_{\alpha}(G)$ obtained by deleting the row and column corresponding to vertex $v$.  Let $M'$ be the matrix obtained from $M$ by reducing some  non-diagonal entries of each row with row  sum  greater than $m_2$ in $M$ such that $M'$ is nonnegative and each row sum in $M'$ is $m_2$.

Let $D^{(1)}$ be the matrix obtained from $D_{\alpha}(G)$ by replacing all $(w,v)$-entries by $1-\alpha$ for  $w\in V(G)\setminus\{v\}$, and replacing the submatrix $M$  by $M'$. Obviously,  $D_{\alpha}(G)$ and $D^{(1)}$ are nonnegative and irreducible, $D_{\alpha}(G)\ge D^{(1)}$. By Lemma~\ref{non}, $\mu_{\alpha}(G)\ge \rho (D^{(1)})$ with equality if and only if $D_{\alpha}(G)=D^{(1)}$. By applying  Lemma~\ref{ela} to
$D^{(1)}$, we obtain the lower bound for  $\mu_{\alpha}(G)$. Suppose that this lower bound is attained.
Then  $D_{\alpha}(G)=D^{(1)}$. As all $(w,v)$-entries are equal to $1-\alpha$ for  $w\in V(G)\setminus\{v\}$, implying that $d_G(v)=n-1$. As $T_G(v)=T_{\max}(G)$, $G$ is a complete graph.
Conversely, if $G$ is a complete graph,  then it is obvious that the lower bound for $\mu_{\alpha}(G)$ is attained.

Let $C$ be the submatrix of $D_{\alpha}(G)$ obtained by deleting the row and column corresponding to vertex $u$. Let $C'$ be the matrix obtained from $C$ by adding positive numbers  to non-diagonal entries of each row with row sum less than $m_1$ in $C$ such that each row sum in $C'$ is $m_1$.
Let $D^{(2)}$ be the matrix obtained from $D_{\alpha}(G)$ by replacing all $(w,u)$-entries  by $(1-\alpha)e(u)$ for $w\in V(G)\setminus\{u\}$, and replacing the submatrix $C$  by $C'$. Obviously,  $D_{\alpha}(G)$ and $D^{(2)}$ are nonnegative and irreducible, $D^{(2)}\ge D_{\alpha}(G)$. By Lemma~\ref{non}, $\mu_{\alpha}(G)\le \rho(D^{(2)})$ with equality if and only if $D_{\alpha}(G)=D^{(2)}$. By applying  Lemma~\ref{ela} to
$D^{(2)}$, we obtain the upper bound for  $\mu_{\alpha}(G)$. Suppose that this upper bound is attained.
By Lemma~\ref{non},  Then $D_{\alpha}(G)=D^{(2)}$. As all $(w,u)$-entries are equal to  $(1-\alpha)e(u)$ for $w\in V(G)\setminus\{u\}$, implying that $e(u)=1$, i.e., $d_G(u)=n-1$. Note that $T_G(w)=m_1+1-\alpha$ for all $w\in V(G)\setminus\{u\}$ and $T_{\min}(G)=T_G(u)=n-1$.  If $m_1+1-\alpha=n-1$, then $G$ is a complete graph, which is a DVDR graph. Otherwise, $m_1+1-\alpha>n-1$,
and thus  by Lemma~\ref{DVDR}, $G$ is a DVDR graph.
 Conversely, if $G$ is a DVDR graph, then  $G$ is either complete or non-complete, and by Lemma \ref{DVDR} when $G$ is non-complete and the above argument,  it is obvious that the upper bound for  $\mu_{\alpha}(G)$ is attained.
\end{proof}

We mention that more bounds  for $\mu_{\alpha}(G)$ may be derived from some known bounds for nonnegative matrices, see, e.g., \cite{DZ}.

Let $G$ be a connected graph on $n$ vertices.  As $\mu_{\alpha}(G)\le T_{\max}(G)$ with equality if and only if $G$ is transmission regular.
Recently, Liu et al.~\cite{LSX} showed that
\[
T_{\max}(G)-\mu_{0}(G)>\frac{nT_{\max}(G)-2\sigma (G)}{(nT_{\max}(G)-2\sigma (G)+1)n}
\]
and
\[
T_{\max}(G)-\mu_{\frac{1}{2}}(G)>\frac{nT_{\max}(G)-2\sigma (G)}{(2(nT_{\max}(G)-2\sigma (G))+1)n}.
\]

\begin{Theorem}\label{new1} Let $G$ be a connected non-transmission-regular graph of order $n$.
Then
\[
T_{\max}(G)-\mu_{\alpha}(G)>\frac{(1-\alpha)nT_{\max}(G)(nT_{\max}(G)-2\sigma (G))}{(1-\alpha)n^2T_{\max}(G)+4\sigma (G)(nT_{\max}(G)-2\sigma (G))}.\]
\end{Theorem}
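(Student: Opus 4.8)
The plan is to convert the statement into a single, clean upper bound and then to read that bound off from the $\alpha$-eigenequation. Write $\mu=\mu_\alpha(G)$, $T=T_{\max}(G)$, $\sigma=\sigma(G)$, and $W=nT-2\sigma$. Since $G$ is not transmission regular, Lemma~\ref{xz} gives $\mu>\frac{2\sigma}{n}$ strictly, while the remark preceding the theorem gives $\mu<T$ strictly; also $W=\sum_{u}(T-T_G(u))>0$. Hence $g:=T-\mu$ and $h:=\mu-\frac{2\sigma}{n}$ are both positive and $g+h=T-\frac{2\sigma}{n}=\frac{W}{n}$. Cross-multiplying the asserted inequality (its denominator $(1-\alpha)n^2T+4\sigma W$ is positive) and using $ng-W=-nh$, I would show it is equivalent to $(1-\alpha)n^2T\,h<4\sigma W\,g$, and, substituting $g=\frac{W}{n}-h$, to the explicit estimate
\[
h<\frac{4\sigma W^2}{n\big((1-\alpha)n^2T+4\sigma W\big)}.
\]
Thus the whole theorem reduces to an upper bound on $h=\mu-\frac{2\sigma}{n}$.

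Next I would record two exact identities for the distance $\alpha$-Perron vector $x$, with $s=\sum_{u}x_u>0$ and $x_{\min}=\min_u x_u$. Because each row sum of $D_\alpha(G)$ equals the corresponding transmission, $D_\alpha(G)\mathbf{1}=(T_G(u))_u$, so by symmetry $\mu s=\mathbf{1}^\top D_\alpha(G)x=\sum_{u}T_G(u)x_u$. Subtracting suitable multiples of $\sum_u(T_G(u)-\bar T)=0$ (with $\bar T=\frac{2\sigma}{n}$, $\bar x=\frac{s}{n}$) turns this into the covariance identity
\[
h\,s=\sum_{u\in V(G)}\big(T_G(u)-\bar T\big)\big(x_u-\bar x\big),
\]
which displays $h$ as a covariance of transmissions against Perron weights and explains $h\ge0$. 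The companion identity $g\,s=\sum_u\big(T-T_G(u)\big)x_u$ governs $g$.

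From here I would estimate $h$ quantitatively. Cauchy–Schwarz applied to the covariance identity gives $h\,s\le\sqrt{V_T}\,\sqrt{1-s^2/n}$, where $V_T=\sum_u(T_G(u)-\bar T)^2=\sum_u(T-T_G(u))^2-\frac{W^2}{n}$ is controlled by $W,T$ (e.g.\ $\sum_u(T-T_G(u))^2\le W\,(T-T_{\min}(G))$). For the matching lower bound on $g$, positivity of $x$ gives $g\,s=\sum_u(T-T_G(u))x_u\ge x_{\min}W$, while the $\alpha$-eigenequation at a vertex $u^\ast$ of least Perron weight, together with $d_G(u^\ast,v)\ge1$, yields $x_{\min}\ge\frac{(1-\alpha)s}{\mu-\alpha T_G(u^\ast)+1-\alpha}$. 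Feeding $s\in[1,\sqrt n]$ and these two estimates into the reduced target, and invoking $\mu\ge\frac{2\sigma}{n}$ from Lemma~\ref{xz} wherever $\mu$ must be bounded, is meant to produce the factor $4\sigma$ and the term $(1-\alpha)n^2T$ in the denominator.

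The hard part will be this final matching of constants. The inequality is of second order in the "irregularity" of $G$: as $G$ tends to a transmission regular graph one has $g=O(\varepsilon)$ but $h=O(\varepsilon^2)$, so both sides of $(1-\alpha)n^2T\,h<4\sigma W\,g$ vanish at the same rate and no single crude step can be afforded. In particular the bounds $d_G(u,v)\ge1$, $s\le\sqrt n$, and $V_T\le W^2$ are each too lossy, and the delicate point is to control the deviation $\sum_u(x_u-\bar x)^2=1-\frac{s^2}{n}$ of the Perron vector from the uniform vector sharply enough that the product of the $h$-estimate and the $g$-estimate lands exactly on the stated denominator. The subtlety is to achieve this without reintroducing the spectral gap $\mu^{(1)}_\alpha(G)-\mu^{(2)}_\alpha(G)$, which controls $1-\frac{s^2}{n}$ naturally but does not appear in the final bound; I expect this to require expressing $1-\frac{s^2}{n}=\frac1n\sum_{u<v}(x_u-x_v)^2$ and bounding pairwise differences through differences of $\alpha$-eigenequations, thereby tying $1-\frac{s^2}{n}$ back to $V_T$ and $W$.
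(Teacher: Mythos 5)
Your opening reduction is sound: with $g=T-\mu$, $h=\mu-\frac{2\sigma}{n}$, $W=nT-2\sigma$, the asserted inequality is indeed equivalent to $(1-\alpha)n^2T\,h<4\sigma W\,g$ and hence to $h<\frac{4\sigma W^2}{n((1-\alpha)n^2T+4\sigma W)}$, and the covariance identity $hs=\sum_u(T_G(u)-\bar T)(x_u-\bar x)$ is correct (though it does not by itself "explain" $h\ge 0$; that comes from Rayleigh with the uniform vector). From that point on, however, the proposal is a plan rather than a proof, and the step you yourself flag as the hard part is exactly the missing idea. Your Cauchy--Schwarz estimate $hs\le\sqrt{V_T}\sqrt{1-s^2/n}$ only becomes useful if you can show $1-\frac{s^2}{n}=O(W^2)$, i.e.\ that the Perron vector lies within $O(W)$ of the uniform vector in $\ell^2$; that is a quantitative perturbation statement which normally costs a spectral gap, and your proposed substitute (bounding pairwise differences via differences of $\alpha$-eigenequations) is not carried out. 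Nothing in the proposal actually produces the factor $4\sigma$ or the term $(1-\alpha)n^2T$, so the theorem is not established.

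The paper's proof takes a different, self-contained route that avoids this difficulty entirely. It starts from the exact identity
\[
T_{\max}(G)-\mu_{\alpha}(G)=\sum_{w\in V(G)}(T_{\max}(G)-T_G(w))x_w^2+(1-\alpha)\sum_{\{w,z\}\subseteq V(G)}d_{wz}(x_w-x_z)^2,
\]
bounds the first sum below by $Wx_v^2$ where $x_v$ is the minimum entry of the Perron vector, and bounds the Dirichlet-type sum below by $c\,(x_u-x_v)^2$ with $c$ of order $n$, by restricting attention to a shortest path between the vertices carrying the maximum and minimum entries and applying Cauchy--Schwarz along it (with a case analysis on the length $\ell$ of that path and its parity). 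Minimizing the resulting quadratic in $x_v$ yields a lower bound proportional to $x_u^2$, and the inequality $x_u^2>\frac{\mu_\alpha(G)}{2\sigma(G)}$ (from $\mu_\alpha(G)=x^\top D_\alpha(G)x<2\sigma(G)x_u^2$) then closes a self-referential inequality in $T_{\max}(G)-\mu_\alpha(G)$ whose solution is exactly the stated bound. Note that this argument never needs to control $1-\frac{s^2}{n}$: the second-order cancellation you worry about is sidestepped because $Wx_v^2$ and $(x_u-x_v)^2$ enter one quadratic that is optimized jointly. If you wish to salvage your approach, the realistic fix is to replace the covariance identity by this quadratic-form decomposition.
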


\begin{proof}  Let  $x$ be the  $\alpha$-Perron vector  of $G$. Denote by $x_u=\max\{x_w: w\in V(G)\}$ and $x_v=\min\{x_w: w\in V(G)\}$. Since $G$ is not transmission regular, we have $x_u> x_v$, and thus
\begin{eqnarray*}
\mu_{\alpha}(G)&=&x^\top D_{\alpha}(G)x\\
&=&\alpha \sum_{w\in V(G)}T_G(w)x_w^{2}+2(1-\alpha)\sum_{\{w,z\}\subseteq V(G)}d_{wz}x_wx_z\\
&<& 2\alpha \sigma (G)x_u^2+2(1-\alpha)\sigma (G)x_u^2,
\end{eqnarray*}
implying  that $x_u^2>\frac{\mu_{\alpha}(G)}{2\sigma (G)}$.
Note that
\begin{eqnarray*}
&&T_{\max}(G)-\mu_{\alpha}(G)\\
&=&T_{\max}(G)-\alpha \sum_{w\in V(G)}T_G(w)x_w^{2}-2(1-\alpha)\sum_{\{w,z\}\subseteq V(G)}d_{wz}x_wx_z\\
&=& \sum_{w\in V(G)}(T_{\max}(G)-T_G(w))x_w^2+(1-\alpha)\sum_{\{w,z\}\subseteq V(G)}d_{wz}(x_w-x_z)^2\\
&\ge& \sum_{w\in V(G)}(T_{\max}(G)-T_G(w))x_v^2+(1-\alpha)\sum_{\{w,z\}\subseteq V(G)}d_{wz}(x_w-x_z)^2\\
&=& (nT_{\max}(G)-2\sigma (G))x_v^2+(1-\alpha)\sum_{\{w,z\}\subseteq V(G)}d_{wz}(x_w-x_z)^2.
\end{eqnarray*}

We need to estimate $\sum_{\{w,z\}\subseteq V(G)}d_{wz}(x_w-x_z)^2$. Obviously, 
\[
\sum_{\{w,z\}\subseteq V(G)}d_{wz}(x_w-x_z)^2\ge N_1+N_2,
\]
where $N_1=\sum_{w\in V(G)\setminus V(P)}\sum_{z\in V(P)}d_{wz}(x_w-x_z)^2$ and $N_2=\sum_{\{w,z\}\subseteq V(P)}d_{wz}(x_w-x_z)^2$.
Let $P=w_0, w_1,\dots, w_\ell$ be the shortest path connecting $u$ and $v$,  where $w_0=u$, $w_{\ell}=v$, and $\ell\ge 1$.
For $w\in V(G)\setminus V(P)$, by Cauchy-Schwarz inequality, we have
\[
d_{wu}(x_w-x_u)^2+d_{wv}(x_w-x_v)^2\ge (x_w-x_u)^2+(x_w-x_v)^2\ge \frac{1}{2}(x_u-x_v)^2,
\]
and thus
\begin{eqnarray*}
N_1&\ge& \sum_{w\in V(G)\setminus V(P)} \left(d_{wu}(x_w-x_u)^2+d_{wv}(x_w-x_v)^2\right)\\
&\ge & \sum_{w\in V(G)\setminus V(P)} \frac{1}{2}(x_u-x_v)^2\\
&= &\frac{n-\ell-1}{2}(x_u-x_v)^2.
\end{eqnarray*}
For $1\le i \le \ell-1$, by Cauchy-Schwarz inequality, we have
\begin{eqnarray*}
&& d_{w_0w_i}(x_{w_0}-x_{w_i})^2+d_{w_i w_\ell}(x_{w_i}-x_{w_\ell})^2\\
&\ge & \min\{i,\ell-i\}\left((x_{w_0}-x_{w_i})^2+(x_{w_i}-x_{w_\ell})^2\right)\\
&\ge& \min\{i,\ell-i\}\cdot \frac{1}{2}(x_{w_0}-x_{w_\ell})^2\\
&=& \frac{1}{2}\min\{i,\ell-i\}(x_u-x_v)^2,
\end{eqnarray*}
and thus
\begin{eqnarray*}
N_2&\ge & d_{uv}(x_u-x_v)^2+\sum_{i=1}^{\ell-1}\left(d_{w_0w_i}(x_{w_i}-x_{w_0})^2+d_{w_i w_\ell}(x_{w_i}-x_{w_\ell})^2\right)\\
&\ge & \ell (x_{u}-x_{v})^2+\sum_{i=1}^{\ell-1}\frac{1}{2}\min\{i,\ell-i\}(x_{u}-x_{v})^2\\
&=& \left(\ell+\frac{1}{2}\sum_{i=1}^{\ell-1}\min\{i,\ell-i\}\right)(x_{u}-x_{v})^2\\
&=&\begin{cases}
\frac{\ell^2+8\ell}{8}(x_u-x_v)^2 & \text{ if $\ell$ is even,}\\[2mm]
\frac{\ell^2+8\ell-1}{8}(x_u-x_v)^2 & \text{ if $\ell$ is even.}
\end{cases}
\end{eqnarray*}

\noindent{\bf Case 1.} $u$ and $v$ are adjacent, i.e., $\ell=1$.

 In this case, we have
\begin{eqnarray*}
\sum_{\{w,z\}\subseteq V(G)}d_{wz}(x_w-x_z)^2 &\geq& N_1+N_2\\
&\ge & \frac{n-1-1}{2}(x_u-x_v)^2+(x_u-x_v)^2\\
                                              &=& \frac{n}{2}(x_u-x_v)^2.
\end{eqnarray*}
Thus
\begin{eqnarray*}
T_{\max}(G)-\mu_{\alpha}(G)&\ge& (nT_{\max}(G)-2\sigma (G))x_v^2+(1-\alpha)\sum_{\{w,z\}\subseteq V(G)}d_{wz}(x_w-x_z)^2\\
&\ge & (nT_{\max}(G)-2\sigma (G))x_v^2+(1-\alpha)\frac{n}{2}(x_u-x_v)^2.
\end{eqnarray*}
Viewed as a function of $x_v$, $(nT_{\max}(G)-2\sigma (G))x_v^2+(1-\alpha)\frac{n}{2}(x_u-x_v)^2$ achieves its minimum value $\frac{(1-\alpha)n(nT_{\max}(G)-2\sigma (G))}{(1-\alpha)n+2(nT_{\max}(G)-2\sigma (G))}x_u^2$.
Recall that $x_u^2>\frac{\mu_{\alpha}(G)}{2\sigma (G)}$. Then we have
\begin{eqnarray*}
T_{\max}(G)-\mu_{\alpha}(G) 
&>&\frac{(1-\alpha)n(nT_{\max}(G)-2\sigma (G))}{(1-\alpha)n+2(nT_{\max}(G)-2\sigma (G))}
\cdot \frac{\mu_{\alpha}(G)}{2\sigma (G)}\\
&=&\frac{(1-\alpha)n(nT_{\max}(G)-2\sigma (G))T_{\max}(G)}{2\sigma (G)((1-\alpha)n+2(nT_{\max}(G)-2\sigma (G)))}\\
&&-\frac{(1-\alpha)n(nT_{\max}(G)-2\sigma (G))(T_{\max}(G)-\mu_{\alpha}(G))}{2\sigma (G)((1-\alpha)n+2(nT_{\max}(G)-2\sigma (G)))},
\end{eqnarray*}
which implies that
\[
T_{\max}(G)-\mu_{\alpha}(G)>\frac{(1-\alpha)nT_{\max}(G)(nT_{\max}(G)-2\sigma (G))}{(1-\alpha)n^2T_{\max}(G)+4\sigma (G)(nT_{\max}(G)-2\sigma (G))}.
\]

\noindent{\bf Case 2.} $u$ and $v$ are not adjacent,  i.e., $\ell\ge 2$.

Suppose first that $\ell$ is even.
Then
\begin{eqnarray*}
\sum_{\{w,z\}\subseteq V(G)}d_{wz}(x_w-x_z)^2 &\geq&N_1+N_2\\
&\ge & \frac{n-\ell-1}{2}(x_u-x_v)^2+\frac{\ell^2+8\ell}{8}(x_u-x_v)^2\\
                                              &=& \frac{\ell ^2+4\ell +4n-4}{8}(x_u-x_v)^2.
\end{eqnarray*}
Thus
\begin{eqnarray*}
T_{\max}(G)-\mu_{\alpha}(G)&\ge& (nT_{\max}(G)-2\sigma (G))x_v^2+(1-\alpha)\sum_{\{w,z\}\subseteq V(G)}d_{wz}(x_w-x_z)^2\\
&\ge& (nT_{\max}(G)-2\sigma (G))x_v^2+(1-\alpha)\frac{\ell ^2+4\ell +4n-4}{8}(x_u-x_v)^2.
\end{eqnarray*}
Viewed as a function of $x_v$, $(nT_{\max}(G)-2\sigma (G))x_v^2+(1-\alpha)\frac{\ell ^2+4\ell +4n-4}{8}(x_u-x_v)^2$ achieves its minimum value $\frac{(1-\alpha)(nT_{\max}(G)-2\sigma (G))(\ell^2+4\ell+4n-4)}{(1-\alpha)(\ell^2+4\ell+4n-4)+8(nT_{\max}(G)-2\sigma (G))}x_u^2$. As $x_u^2>\frac{\mu_{\alpha}(G)}{2\sigma (G)}$, we have
\[
T_{\max}(G)-\mu_{\alpha}(G)
> \frac{(1-\alpha)(nT_{\max}(G)-2\sigma (G))(\ell^2+4\ell+4n-4)}{(1-\alpha)(\ell^2+4\ell+4n-4)+8(nT_{\max}(G)-2\sigma (G))}\cdot \frac{\mu_{\alpha}(G)}{2\sigma (G)},
\]
i.e.,
\[
T_{\max}(G)-\mu_{\alpha}(G)>\frac{(1-\alpha)(nT_{\max}(G)-2\sigma (G))(\ell^2+4\ell+4n-4)T_{\max}(G)}{(1-\alpha)(\ell^2+4\ell+4n-4)nT_{\max}(G)+16\sigma (G)(nT_{\max}(G)-2\sigma (G))}.
\]
As a function of $\ell$, the expression in the right hand side in the above inequality is strictly increasing for $\ell\ge 2$. Thus we have
\begin{eqnarray*}
T_{\max}(G)-\mu_{\alpha}(G)&>&\frac{(1-\alpha)(nT_{\max}(G)-2\sigma (G))(n+2)T_{\max}(G)}{(1-\alpha)(n+2)nT_{\max}(G)+4\sigma (G)(nT_{\max}(G)-2\sigma (G))}\\
&>&\frac{(1-\alpha)nT_{\max}(G)(nT_{\max}(G)-2\sigma (G))}{(1-\alpha)n^2T_{\max}(G)+4\sigma (G)(nT_{\max}(G)-2\sigma (G))}.
\end{eqnarray*}

Now suppose that $\ell$ is odd. 

Then
\begin{eqnarray*}
\sum_{\{w,z\}\subseteq V(G)}d_{wz}(x_w-x_z)^2 &\geq& N_1+N_2\\
&\ge & \frac{n-\ell-1}{2}(x_u-x_v)^2+\frac{\ell^2+8\ell-1}{8}(x_u-x_v)^2\\
&=& \frac{\ell ^2+4\ell +4n-5}{8}(x_u-x_v)^2.
\end{eqnarray*}
Thus, as early, we have
\begin{eqnarray*}
&&T_{\max}(G)-\mu_{\alpha}(G)\\
&\ge& (nT_{\max}(G)-2\sigma (G))x_v^2+(1-\alpha)\frac{\ell ^2+4\ell +4n-5}{8}(x_u-x_v)^2\\
&\ge& \frac{(1-\alpha)(\ell^2+4\ell+4n-5)(nT_{\max}(G)-2\sigma (G))}{(1-\alpha)(\ell^2+4\ell+4n-5)+8(nT_{\max}(G)-2\sigma (G))}x_u^2\\
&>& \frac{(1-\alpha)(\ell^2+4\ell+4n-5)(nT_{\max}(G)-2\sigma (G))}{(1-\alpha)(\ell^2+4\ell+4n-5)+8(nT_{\max}(G)-2\sigma (G))}\cdot \frac{\mu_{\alpha}(G)}{2\sigma (G)},
\end{eqnarray*}
implying that
\[
T_{\max}(G)-\mu_{\alpha}(G)>\frac{(1-\alpha)(nT_{\max}(G)-2\sigma (G))(\ell^2+4\ell+4n-5)T_{\max}(G)}{(1-\alpha)(\ell^2+4\ell+4n-5)nT_{\max}(G)+16\sigma (G)(nT_{\max}(G)-2\sigma (G))}.
\]
As a function of $\ell$, the expression in the right hand side in the above inequality is strictly increasing for $\ell\ge 3$. Thus we have
\begin{eqnarray*}
T_{\max}(G)-\mu_{\alpha}(G)&>&\frac{(1-\alpha)(nT_{\max}(G)-2\sigma (G))(4+n)T_{\max}(G)}{(1-\alpha)(4+n)nT_{\max}(G)+4\sigma (G)(nT_{\max}(G)-2\sigma (G))}\\
&>&\frac{(1-\alpha)nT_{\max}(G)(nT_{\max}(G)-2\sigma (G))}{(1-\alpha)n^2T_{\max}(G)+4\sigma (G)(nT_{\max}(G)-2\sigma (G))}.
\end{eqnarray*}

The result follows by combining Cases 1 and 2.
\end{proof}

If $\alpha=0, \frac{1}{2}$, then the bound for $T_{\max}(G)-\rho_{0}(G)$ in  Theorem~\ref{new1} reduces to
\[
T_{\max}(G)-\mu_{0}(G)>\frac{(nT_{\max}(G)-2\sigma (G))nT_{\max}(G)}{n^2T_{\max}(G)+4(nT_{\max}(G)-2\sigma (G))\sigma (G)} 
\]
and
\[
T_{\max}(G)-\mu_{\frac{1}{2}}(G)>\frac{(nT_{\max}(G)-2\sigma (G))nT_{\max}(G)}{n^2T_{\max}(G)+8(nT_{\max}(G)-2\sigma (G))\sigma (G)}. 
\]

\section{Effect of graft transformations on distance $\alpha$-spectral radius}

In this section, we study the effect of some graft transformations on distance $\alpha$-spectral radius.

A path $u_0\dots u_r$ (with $r\geq 1$) in a graph $G$ is called a pendant path (of length $r$)
at $u_0$ if $d_G(u_0)\geq 3$, the degrees of $u_1,\dots,u_{r-1}$ (if any exists) are all equal to $2$ in $G$, and $d_G(u_r)=1$.
A pendant path of length $1$ at $u_0$ is called a pendant edge at $u_0$.

A vertex of a graph is a pendant vertex if its degree is $1$.
The neighbor of the pendant vertex in a graph is called a quasi-pendant vertex. A non-pendant edge in a graph is an edge such that both end vertices are not pendant vertices.
A cut edge of a connected graph is an edge whose removal yields a disconnected graph.

If $P$ is a pendant path of $G$ at $u$ with length $r\geq1$, then we say $G$ is obtained from $H$ by attaching a pendant path $P$ of length $r$ at $u$ with $H=G[V(G)\setminus(V(P)\setminus\{u\})]$.
If the pendant path of length $1$ is attached to a vertex $u$ of $H$, then we also say that a pendant vertex  is attached to $u$.

\begin{Theorem}\label{new1} Let $G$ be a connected graph and $uv$ a non-pendant cut edge of $G$. Let $G_{uv}$ be the graph obtained from $G$ by
identifying vertices $u$ and $v$  to vertex $v$ and attaching a pendant vertex $u$ to $v$. If at least one of $\{u,v\}$ is a quasi-pendant vertex in $G$, then $\mu_{\alpha}(G)>\mu_{\alpha}(G_{uv})$. 
\end{Theorem}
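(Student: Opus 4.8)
The plan is to bound $\mu_\alpha(G)$ from below by evaluating the Rayleigh quotient of $D_\alpha(G)$ at the distance $\alpha$-Perron vector $x$ of the \emph{smaller} graph $G_{uv}$. Since $x$ is a positive unit vector, Rayleigh's principle gives $\mu_\alpha(G)\ge x^\top D_\alpha(G)x$, while $x^\top D_\alpha(G_{uv})x=\mu_\alpha(G_{uv})$; as $V(G)=V(G_{uv})$ (with $v$ the merged vertex and $u$ the new pendant), it suffices to show the quadratic-form difference is positive. Writing $\phi(a,b):=\alpha(x_a^2+x_b^2)+2(1-\alpha)x_ax_b$ (which is $>0$ for all $a,b$), and letting $A$ (resp. $B$) be the vertices other than $u$ (resp. $v$) in the component of $G-uv$ containing $u$ (resp. $v$) -- both nonempty since $uv$ is non-pendant -- a short analysis along the cut edge shows that distances within $A$, within $B$, and from $B$ to $\{u,v\}$ are unchanged, whereas for $w\in A$ one has $d_G(w,z)=d_{G_{uv}}(w,z)+1$ for every $z\in B\cup\{v\}$ and $d_G(w,u)=d_{G_{uv}}(w,u)-1$. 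Using the quadratic-form identity from the Preliminaries, this reduces the whole theorem to proving
\[
\Delta:=x^\top D_\alpha(G)x-x^\top D_\alpha(G_{uv})x=\sum_{w\in A}\bigl(\phi(w,v)-\phi(w,u)\bigr)+\sum_{w\in A}\sum_{z\in B}\phi(w,z)>0.
\]

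The double sum is strictly positive, so the whole difficulty sits in the first sum, and this is the main obstacle. Indeed $\phi(w,v)-\phi(w,u)=(x_v-x_u)\bigl(\alpha(x_u+x_v)+2(1-\alpha)x_w\bigr)$, and for a distance matrix the pendant vertex $u$ carries \emph{more} Perron weight than its neighbour $v$: one has $T_{G_{uv}}(u)>T_{G_{uv}}(v)$, and subtracting the $\alpha$-eigenequations of $G_{uv}$ at $u$ and at $v$ forces $x_u>x_v$. Hence the first sum is negative, the sign of $\Delta$ is a genuine competition, and a bare transmission/convexity estimate cannot succeed.

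This is exactly where the quasi-pendant hypothesis enters, via Lemma~\ref{auto}. Suppose first that $v$ is quasi-pendant in $G$; its pendant neighbour $q$ lies in $B$ and stays a pendant at $v$ in $G_{uv}$. Since $u$ is also a pendant at $v$ in $G_{uv}$, transposing $u$ and $q$ is an automorphism, so by Lemma~\ref{auto} we get $x_q=x_u$. The key computation is then the identity
\[
\phi(w,q)-(x_u-x_v)\bigl(\alpha(x_u+x_v)+2(1-\alpha)x_w\bigr)=\phi(w,v),
\]
which holds precisely because $x_q=x_u$. Summing over $w\in A$ and discarding the nonnegative reservoir terms $\phi(w,z)$ with $z\in B\setminus\{q\}$ yields $\Delta\ge\sum_{w\in A}\phi(w,v)>0$, settling this case cleanly.

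It remains to treat the case where $u$ is quasi-pendant but $v$ is not, and this is the step I expect to be delicate. Now the pendant neighbour $p$ of $u$ lies in $A$; after the contraction it becomes a pendant at $v$ in $G_{uv}$, so again $x_p=x_u$ by Lemma~\ref{auto}. But the same identity now matches the negative sum against $\phi(w,p)$, which is an \emph{internal} $A$-pair rather than one of the reservoir terms $\phi(w,z)$ with $z\in B$, so the clean cancellation no longer applies verbatim. My plan is first to dispose of $|B|=1$ (a single vertex of $B$ would be a pendant at $v$, making $v$ quasi-pendant and returning us to the previous case), and then, with $|B|\ge2$, to show that the enlarged reservoir $\sum_{w\in A}\sum_{z\in B}\phi(w,z)$ dominates the single negative sum $(x_u-x_v)\sum_{w\in A}\bigl(\alpha(x_u+x_v)+2(1-\alpha)x_w\bigr)$, using the $\alpha$-eigenequation of $G_{uv}$ at $v$ to control $\sum_{z\in B}x_z$ from below in terms of $x_u-x_v$. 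Carrying out this final domination rigorously is the crux of the argument.
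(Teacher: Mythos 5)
Your Case 1 (the endpoint $v$ of the cut edge is quasi-pendant) is exactly the paper's proof: take the distance $\alpha$-Perron vector $x$ of $G_{uv}$, note via Lemma~\ref{auto} that the old pendant neighbour of $v$ and the new pendant $u$ are exchanged by an automorphism of $G_{uv}$, so their $x$-entries agree, and then that single reservoir term cancels the negative sum $\sum_{w\in A}\bigl(\phi(w,u)-\phi(w,v)\bigr)$ exactly, leaving $\Delta\ge\sum_{w\in A}\phi(w,v)>0$. Your accounting of which distances change by $\pm 1$ across the cut edge is also correct and matches the paper.

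The gap is your Case 2. You treat ``$u$ quasi-pendant but $v$ not'' as a genuinely harder configuration and leave the required domination estimate unproved; but that case is vacuous, because the construction is symmetric in the two endpoints of the cut edge. Identifying $u$ and $v$ into one vertex and attaching a single new pendant to the merged vertex yields the same abstract graph whether the merged vertex is labelled $v$ (with pendant $u$) or labelled $u$ (with pendant $v$); that is, $G_{uv}\cong G_{vu}$, hence $\mu_{\alpha}(G_{uv})=\mu_{\alpha}(G_{vu})$, and one may assume without loss of generality that $v$ is the quasi-pendant endpoint. This is precisely the tacit first line of the paper's proof (``Assume that $v$ is a quasi-pendant vertex in $G$''). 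With that one sentence added, your Case 1 covers the whole theorem and the eigenequation-based domination you sketch at the end is unnecessary; as written, however, the proposal does not yet prove the stated result.
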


\begin{proof}  Assume that $v$ is a quasi-pendant vertex in $G$. Let  $v'$ be a pendant neighbor of $v$, and
let $G_1$ and $G_2$ be the components  of $G-uv$ containing $u$ and $v$, respectively,  see Fig.~1.

\begin{center}
\begin{picture}(310,80)
\put(50,35){\circle* {3.5}} \put(70,35){\circle* {3.5}}\put(50,35){\line(1,0) {20}}\put(70,35){\line(1,0) {15}}\put(85,35){\circle* {3.5}}
\put(50,35){\line(-2,1) {30}} \put(50,35){\line(-2,-1) {30}}
\put(70,35){\line(2,1) {30}} \put(70,35){\line(2,-1) {30}}
\bezier{600}(20,20)(-2,35)(20,50) \bezier{600}(100,20)(122,35)(100,50)
\put(48,25){$u$}\put(88,35){$v'$}
\put(-10,30){$G_1$} \put(116,30){$G_2$}
\put(66,25){$v$} \put(55,2){$G$}

\put(250,35){\circle* {3.5}} \put(250,55){\circle* {3.5}}\put(250,35){\line(0,1) {20}}\put(250,35){\line(1,0) {15}}\put(265,35){\circle* {3.5}}
\put(250,35){\line(-2,1) {30}} \put(250,35){\line(-2,-1) {30}}
\put(250,35){\line(2,1) {30}} \put(250,35){\line(2,-1) {30}}
\bezier{600}(220,20)(198,35)(220,50) \bezier{600}(280,20)(302,35)(280,50)
\put(248,25){$v$}\put(268,35){$v'$}
\put(190,30){$G_1$} \put(296,30){$G_2$}
\put(249,59){$u$} \put(246,2){$G_{uv}$}

\put(40,-20) {Fig.~1. The graphs $G$ and $G_{uv}$ in Theorem~\ref{new1}. }
\end{picture}
\end{center}
\vspace{10mm}

Let $x$ be the distance $\alpha$-Perron vector  of $G_{uv}$. By Lemma~\ref{auto},
$x_u=x_{v'}$.

As we pass from $G$ to $G_{uv}$, the distance between a vertex in $V(G_1)\setminus\{u\}$ and a vertex in $V(G_2)$ is decreased by $1$, the distance between a vertex $V(G_1)\setminus\{u\}$ and $u$ is increased by $1$, and the distances between all other vertex pairs remain unchanged.
Thus
\begin{eqnarray*}
&&\mu_{\alpha} (G)-\mu_{\alpha} (G_{uv})\\
&\geq &  x^\top(D_{\alpha}(G)-D_{\alpha}(G_{uv}))x\\
&=&\sum_{w\in V(G_1)\setminus\{u\}}\sum_{z\in V(G_2) }\left(\alpha\left(x_w^2+x_z^2\right)+2(1-\alpha)x_{w}x_{z}\right)\\
&&-\sum_{ w\in V(G_1)\setminus\{u\}}\left(\alpha\left(x_w^2+x_u^2\right)+2(1-\alpha)x_{w}x_{u}\right)\\
&\geq&\sum_{w\in V(G_1)\setminus\{u\}}\left(\alpha\left(x_w^2+x_v^2\right)+2(1-\alpha)x_{w}x_{v}+\alpha\left(x_{w}^2+x_{v'}^2\right)+2(1-\alpha)x_{w}x_{v'}\right)\\
&&-\sum_{ w\in V(G_1)\setminus\{u\}}\left(\alpha\left(x_w^2+x_u^2\right)+2(1-\alpha)x_{w}x_{u} \right)\\
&=&\sum_{w\in V(G_1)\setminus\{u\}}\left(\alpha\left(x_w^2+x_v^2\right)+2(1-\alpha)x_{w}x_{v}\right)\\
&>&0.
\end{eqnarray*}
Therefore $\mu_{\alpha} (G)-\mu_{\alpha} (G_{uv})>0$, i.e., $\mu_{\alpha}(G)>\mu_{\alpha}(G_{uv})$.
\end{proof}

The previous theorem has been established for $\alpha=0,\frac{1}{2}$ in \cite{WZ,LZ}.

\begin{Theorem}\label{333} Let $G$ be a connected graph with $k$ edge--disjoint nontrivial induced subgraphs  $G_1, \dots, G_k$  such that  $V(G_i)\cap V(G_j)=\{u\}$ for $1\le i<j\le k$ and $\cup_{i=1}^kV(G_i)=V(G)$, where $k\ge 3$. Let $K$ be a nonempty subset of $\{3, \dots, k\}$ and let
$N_K=\cup_{i\in K}N_{G_i}(u)$.
 For   $v'\in V(G_1)\setminus\{u\}$ and
$v''\in V(G_2)\setminus \{u\}$, let
\[
G' =G-\{uw:w\in N_K\}+\{v'w: w\in N_K\}
\]
and
\[
G'' =G-\{uw:w\in N_K\}+\{v''w: w\in N_K\}.
\]
 Then  $\mu_{\alpha}(G)< \mu_{\alpha}(G')$ or $\mu_{\alpha}(G)<\mu_{\alpha}(G'')$.
\end{Theorem}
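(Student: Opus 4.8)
The plan is to bound $\mu_{\alpha}(G')$ and $\mu_{\alpha}(G'')$ from below by evaluating the Rayleigh quotient at the distance $\alpha$-Perron vector $x$ of $G$. Since $x$ is a positive unit vector, Rayleigh's principle gives $\mu_{\alpha}(G')\ge x^\top D_{\alpha}(G')x$ and $\mu_{\alpha}(G'')\ge x^\top D_{\alpha}(G'')x$, so if we set $\Delta'=x^\top\left(D_{\alpha}(G')-D_{\alpha}(G)\right)x$ and $\Delta''=x^\top\left(D_{\alpha}(G'')-D_{\alpha}(G)\right)x$, it suffices to prove that $\Delta'>0$ or $\Delta''>0$. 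Using the quadratic-form expression for $x^\top D_{\alpha}(\cdot)x$, each of $\Delta',\Delta''$ is a sum over unordered vertex pairs of the distance change times the weight $\phi(w,z)=\alpha(x_w^2+x_z^2)+2(1-\alpha)x_wx_z$, which is positive for positive $x$ and $\alpha\in[0,1)$. Writing $C=\bigcup_{i\in K}(V(G_i)\setminus\{u\})$ for the set of vertices of the relocated blocks, the fact that the $G_i$ meet only at $u$ means the only pairs whose distance changes under either transformation are the pairs $\{c,z\}$ with $c\in C$ and $z\notin C$.

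Next I would establish the key distance identity. In $G'$ the vertex $v'$ is a cut vertex separating $C$ from the rest, and the relocated part of $G'$ is isomorphic to the relocated part of $G$ via $u\mapsto v'$ (identity on $C$); hence $d_{G'}(c,v')=d_G(c,u)$ for every $c\in C$, and therefore for all $c\in C$, $z\notin C$,
\[
d_{G'}(c,z)-d_G(c,z)=d_G(v',z)-d_G(u,z),
\]
a quantity \emph{independent of} $c$, with the analogous identity holding for $G''$ and $v''$. This collapses $\Delta'$ to $\sum_{c\in C}\sum_{z\notin C}\left(d_G(v',z)-d_G(u,z)\right)\phi(c,z)$.

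The final step is a symmetrization. I split the inner sum according to the block containing $z$. For $z$ outside $G_1$ (that is, $z=u$, or $z\in V(G_2)\setminus\{u\}$, or $z$ in a block $G_j$ with $j\ge 3$, $j\notin K$) the shortest $v'$–$z$ path runs through $u$, so the increment equals the constant $p'=d_G(u,v')\ge 1$; for $z\in V(G_1)\setminus\{u\}$ the triangle inequality gives $d_G(v',z)-d_G(u,z)\ge -p'$. Letting $B_1,B_2$ denote the $\phi$-masses of the pairs $(c,z)$ with $z\in V(G_1)\setminus\{u\}$ and $z\in V(G_2)\setminus\{u\}$ respectively, and $B_0>0$ the $\phi$-mass of the remaining pairs (those with $z=u$ or $z$ in a block $G_j$, $j\ge 3$, $j\notin K$), these estimates yield $\Delta'\ge p'(B_0+B_2-B_1)$ and, symmetrically, $\Delta''\ge p''(B_0+B_1-B_2)$ with $p''=d_G(u,v'')\ge 1$. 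Since $(B_0+B_2-B_1)+(B_0+B_1-B_2)=2B_0>0$, at least one bracket is positive, so $\Delta'>0$ or $\Delta''>0$, which is the desired disjunction.

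The main obstacle is the distance bookkeeping of the middle step: verifying that the per-pair increment is genuinely independent of $c\in C$ and reduces to $d_G(v',z)-d_G(u,z)$, which hinges on $u$ being the unique common vertex of the blocks (so the relocation is rigid) together with $v'$ becoming a cut vertex in $G'$. Once this $c$-independence is secured, the disjunction is forced by the elementary observation that $B_0>0$ rules out both $B_0+B_2-B_1$ and $B_0+B_1-B_2$ being nonpositive at once; the blocks $G_1$ and $G_2$, which are never relocated, serve precisely to supply the two symmetric targets whose combined effect is governed by the strictly positive mass $B_0$.
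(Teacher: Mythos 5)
Your proposal is correct and follows essentially the same route as the paper: evaluate the Rayleigh quotient of $G'$ and $G''$ at the distance $\alpha$-Perron vector of $G$, reduce the change to the cross pairs between the relocated blocks and the rest, and exploit the strictly positive contribution of the pairs involving $u$ (your $B_0$; the paper's extra term $\sum_{z\in V_K}\bigl(\alpha(x_u^2+x_z^2)+2(1-\alpha)x_ux_z\bigr)$) to force at least one of the two bounds to be strictly positive. Your bookkeeping via the $c$-independent increment $d_G(v',z)-d_G(u,z)$ and the identity $(B_0+B_2-B_1)+(B_0+B_1-B_2)=2B_0>0$ is just a slightly more symmetric packaging of the paper's case split on the sign of $\Gamma=B_2-B_1$.
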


\begin{proof}  Let $x$ be the distance $\alpha$-Perron vector  of $G$. Let $V_K=\left(\cup_{i\in K} V(G_i)\right)\setminus\{u\}$. Let
\begin{eqnarray*} \Gamma&=&\sum_{w\in V(G_2)\setminus\{u\}}\sum_{z\in V_K}\left(\alpha\left(x_w^2+x_z^2\right)+2(1-\alpha)x_{w}x_{z}\right)\\
&&-\sum_{w\in V(G_1)\setminus\{u\}}\sum_{z\in V_K}\left(\alpha\left(x_w^2+x_z^2\right)+2(1-\alpha)x_{w}x_{z}\right).
\end{eqnarray*}

 As we pass from $G$ to $G'$, the distance between a vertex in $V(G_2)$ and a vertex in $V_K$ is increased by $d_G(u,v')$, the distance between a vertex $w$ in $V(G_1)\setminus\{u\}$ and a vertex in $V_K$ is decreased by $d_G(w,u)-d_G(w,v')$, which is at most $d_G(u,v')$, and the distances between all other vertex pairs are increased or remain unchanged. Thus
\begin{eqnarray*}
&&\mu_{\alpha} (G')-\mu_{\alpha} (G)\\
&\geq & x^\top (D_{\alpha}(G')-D_{\alpha}(G))x \\
&\geq& \sum_{w\in V(G_2)}\sum_{z\in V_K} \left(d_{G}(u,v')\left(\alpha\left(x_w^2+x_z^2\right)+2(1-\alpha)x_{w}x_{z}\right)\right)\\
&&-\sum_{w\in V(G_1)\setminus\{u\}}\sum_{z\in V_K} \left(d_{G}(u,v')\left(\alpha\left(x_w^2+x_z^2\right)+2(1-\alpha)x_{w}x_{z}\right)\right)\\
&= & d_{G}(u,v')\left(\Gamma +\sum_{z\in V_K} \left(\alpha\left(x_u^2+x_z^2\right)+2(1-\alpha)x_{u}x_z\right)\right)\\
&>& d_G(u,v')\Gamma.
\end{eqnarray*}
If $\Gamma\ge 0$, then $\mu_{\alpha} (G')-\mu_{\alpha} (G)> d_G(u,v')\Gamma\ge 0$, implying that $\mu_{\alpha} (G)<\mu_{\alpha} (G')$. Suppose that $\Gamma<0$.
As we pass from $G$ to $G''$, the distance between a vertex in $V(G_1)$ and a vertex in $V_K$ is increased by $d_G(u,v'')$, the distance between a vertex $w$ in $V(G_2)\setminus\{u\}$ and a vertex in $V_K$ is decreased by $d_G(w,u)-d_G(w,v'')$, which is at most $d_G(u,v'')$, and the distances between all other vertex pairs are increased or remain unchanged. Thus
\begin{eqnarray*}
&&\mu_{\alpha} (G'')-\mu_{\alpha} (G)\\
&\geq & x^\top (D_{\alpha}(G'')-D_{\alpha}(G))x\\
&\geq& \sum_{w\in V(G_1)}\sum_{z\in V_K} \left(d_{G}(u,v'')\left(\alpha\left(x_w^2+x_z^2\right)+2(1-\alpha)x_{w}x_{z}\right)\right)\\
&&-\sum_{w\in V(G_2)\setminus\{u\}}\sum_{z\in V_K} \left(d_{G}(u,v'')\left(\alpha\left(x_w^2+x_z^2\right)+2(1-\alpha)x_{w}x_{z}\right)\right)\\
&= & d_{G}(u,v'')\left(-\Gamma+ \sum_{z\in V_K} \left(\alpha\left(x_u^2+x_z^2\right)+2(1-\alpha)x_{u}x_{z}\right)\right)\\
&>& d_G(u,v'')(-\Gamma )\\
&>& 0,
\end{eqnarray*}
implying that   $\mu_{\alpha} (G'')-\mu_{\alpha} (G)> 0$, i.e., $\mu_{\alpha}(G)<\mu_{\alpha}(G'')$.
\end{proof}

Weak versions of previous theorem for $\alpha=0$ have been given in \cite{Yu,XZD} and a weak version
 for $\alpha=\frac{1}{2}$ may be found  in \cite{LZ}.

For positive integer $p$ and a graph $G$ with $u\in V(G)$, let $G(u;p)$  be the graph obtained from $G$ by attaching a pendant path of
length $p$ at $u$. Let   $G(u;0)=G$, and in this case a pendant path of length $0$ is understood the trivial path consisting of a single vertex $u$.

For nonnegative integers $p$, $q$ and a graph $G$, let $G_u(p,q)$ or simply $G_{p,q}$ be the graph $H(u;q)$ with $H=G(u;p)$.

The following corollary has been given for $\alpha=0$ in \cite{SI,XZD} and $\alpha=\frac{1}{2}$ in~\cite{LL,LZ}.

\begin{Corollary}\label{pq} Let $H$ be a nontrivial connected graph with $u\in V(H)$. If $p\ge q\ge 1$, then $\mu_{\alpha} (H_u(p,q))<\mu_{\alpha} (H_u(p+1,q-1))$.
\end{Corollary}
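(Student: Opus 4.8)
The plan is to realize the single step $H_u(p,q)\to H_u(p+1,q-1)$ as one branch-moving operation of exactly the kind handled by Theorem~\ref{333}, and then to resolve the ``or'' in that theorem's conclusion by an induction on $p-q$.

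First I would fix notation. Write the two pendant paths at $u$ in $G:=H_u(p,q)$ as $uv_1\cdots v_p$ (length $p$) and $uw_1\cdots w_q$ (length $q$), and view $u$ as a cut vertex. Decomposing $G$ at $u$ produces the branch $G_1=G[\{u,v_1,\dots,v_p\}]$ (the long path), the branch $G_2=G[\{u,w_1,\dots,w_q\}]$ (the short path), and, for each component of $H-u$, one further branch consisting of that component together with $u$. Since $H$ is nontrivial and connected there is at least one such branch and every one of them is nontrivial, so in total we obtain $k\ge 3$ edge-disjoint nontrivial induced subgraphs meeting pairwise only in $u$, as Theorem~\ref{333} requires. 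I would then take $K=\{3,\dots,k\}$, so that $N_K=N_H(u)$, and set $v'=v_1\in V(G_1)\setminus\{u\}$ and $v''=w_1\in V(G_2)\setminus\{u\}$.

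With these choices, the graphs $G'$ and $G''$ of Theorem~\ref{333} are obtained by detaching the whole $H$-part from $u$ and re-rooting it at $v_1$, respectively at $w_1$. Relocating $N_H(u)$ to $v_1$ makes $v_1$ inherit the role of $u$ inside $H$ while the former $u$ degenerates into an ordinary interior path vertex, so that $G'\cong H_u(p-1,q+1)$; relocating $N_H(u)$ to $w_1$ gives, symmetrically, $G''\cong H_u(p+1,q-1)$. Hence Theorem~\ref{333} yields
\[
\mu_{\alpha}(H_u(p,q))<\mu_{\alpha}(H_u(p-1,q+1))\quad\text{or}\quad\mu_{\alpha}(H_u(p,q))<\mu_{\alpha}(H_u(p+1,q-1)).
\]

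It then remains only to discard the first alternative, which I would do by induction on $t=p-q\ge 0$. If $t=0$, then $H_u(p-1,q+1)$ and $H_u(p+1,q-1)$ are the same graph and both alternatives give the desired conclusion. If $t=1$, then $H_u(p-1,q+1)\cong H_u(p,q)$, so the first alternative would read $\mu_{\alpha}(H_u(p,q))<\mu_{\alpha}(H_u(p,q))$ and is impossible, forcing the second. If $t\ge 2$, then $p-1\ge q+1\ge 1$ and the difference of the pair $(p-1,q+1)$ is $t-2<t$, so the inductive hypothesis applied to that pair gives $\mu_{\alpha}(H_u(p-1,q+1))<\mu_{\alpha}(H_u(p,q))$, again contradicting the first alternative; thus the second must hold. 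The point I expect to require the most care is the correct identification of $G'$ and $G''$ as $H_u(p-1,q+1)$ and $H_u(p+1,q-1)$ (in particular, checking that after the neighbourhood $N_H(u)$ is transplanted, the old vertex $u$ survives merely as a path vertex while $v_1$ or $w_1$ takes over the role of the attachment vertex), together with verifying that the branch decomposition genuinely meets the hypotheses of Theorem~\ref{333}; once the two outcomes are named correctly, the induction closing off the ``or'' is routine.
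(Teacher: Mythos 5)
Your argument is correct, and it leans on the same key result (Theorem~\ref{333}) as the paper, but you instantiate it differently and therefore need extra work to close the ``or''. You take the natural reattachment points $v'=v_1$ and $v''=w_1$, which yields $G'\cong H_u(p-1,q+1)$ and $G''\cong H_u(p+1,q-1)$, and you then eliminate the unwanted alternative by induction on $p-q$ (with the two base cases $t=0,1$ handled by the symmetry $H_u(a,b)\cong H_u(b,a)$); that descent is sound and is in fact the same device the paper uses later for Corollary~\ref{adpq}. The paper's own proof of this corollary is slicker: it keeps $k=3$ with $G_3=H$ and chooses $v'=u_{p-q+1}$ on the long path (together with $v''$ the first vertex of the short path), so that \emph{both} outcomes of Theorem~\ref{333} are isomorphic to $H_u(p+1,q-1)$ and the disjunction collapses immediately, with no induction. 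What your route buys is that it works with the obvious choice of reattachment vertices and makes the monotonicity mechanism explicit; what the paper's choice buys is a one-line conclusion. Your identification of $G'$ and $G''$ and your verification of the hypotheses of Theorem~\ref{333} (each branch nontrivial, induced, edge-disjoint, meeting pairwise in $u$, with $N_K=N_H(u)$) are all accurate.
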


\begin{proof}
Let $G=H_u(p,q)$.  Let $P=uu_1\dots u_p$ and $Q=uv_1\dots v_q$ be two pendant paths of lengths $p$ and $q$, respectively in $G$.
Using the notations in Theorem~\ref{333} with $k=3$,  $G_1=P$, $G_2=Q$, $G_3=H$, $v'=u_{p-q+1}$ and $v''=v_1$, we have $G'\cong G'' \cong H_u(p+1,q-1)$, and thus
by Theorem~\ref{333},
we have
 $\mu_{\alpha} (H_u(p,q))<\mu_{\alpha} (H_u(p+1,q-1))$.
\end{proof}

\begin{Theorem} \label{444} Let $G$ be a connected graph with three edge--disjoint induced subgraphs  $G_1$, $G_2$ and $G_3$ such that $V(G_1)\cap V(G_3)=\{u\}$, $V(G_2)\cap V(G_3)=\{v\}$, $\cup_{i=1}^3V(G_i)=V(G)$, and $G_1-u$,  $G_2-v$, and $G_3-u-v$ are all nontrivial.
Suppose that $uv\in E(G_3)$.  
For $u'\in N_{G_1}(u) $ and $v'\in N_{G_2}(v)$, let
\[
G'=H+\{u'w: w\in N_{G_3-uv}(u)\}+\{uw: w\in N_{G_3-uv}(v)\}\]
and
\[
G''=H+\{vw: w\in N_{G_3-uv}(u)\}+\{v'w: w\in N_{G_3-uv}(v)\},\]
where $H=G-\{uw: w\in N_{G_3-uv}(u)\}-\{vw: w\in N_{G_3-uv}(v)\}$.
Then $\mu_{\alpha} (G)<\mu_{\alpha} (G')$ or $\mu_{\alpha} (G)<\mu_{\alpha} (G'')$.
\end{Theorem}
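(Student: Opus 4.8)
The plan is to follow the pattern of Theorem~\ref{333}. Let $x$ be the distance $\alpha$-Perron vector of $G$ and set $\phi(a,b)=\alpha(x_a^2+x_b^2)+2(1-\alpha)x_ax_b$, which is positive because $x$ is a positive vector and $\alpha\in[0,1)$. Writing $M=V(G_3)\setminus\{u,v\}$ for the ``middle'' of $G_3$, I record that by Rayleigh's principle $\mu_\alpha(G')\ge x^\top D_\alpha(G')x=\mu_\alpha(G)+\sum_{\{a,b\}}(d_{G'}(a,b)-d_G(a,b))\phi(a,b)$, and likewise for $G''$. I will introduce the single quantity
\[
\Gamma=\sum_{w\in V(G_2)}\sum_{z\in M}\phi(w,z)-\sum_{w\in V(G_1)}\sum_{z\in M}\phi(w,z),
\]
prove $\mu_\alpha(G')-\mu_\alpha(G)\ge\Gamma$ and $\mu_\alpha(G'')-\mu_\alpha(G)\ge-\Gamma$, and finish by distinguishing the sign of $\Gamma$.

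The heart of the argument is a structural observation about distances. Since $G_3$ meets the rest of $G$ only at $u$ (the gateway to $G_1$) and at $v$ (the gateway to $G_2$), every shortest path from $V(G_1)$ or $V(G_2)$ into $M$ passes through $u$ or $v$, and the edge $uv$ guarantees that no path through $M$ can shortcut a within-$G_1$, within-$G_2$, or $G_1$--$G_2$ distance. The map $G\mapsto G'$ re-anchors the middle block: the port $N_{G_3-uv}(u)$ slides from $u$ to $u'$ and the port $N_{G_3-uv}(v)$ slides from $v$ to $u$. The decisive point is that the re-anchored block on $M\cup\{u',u\}$ (with the edge $u'u$ retained) is isomorphic to $G_3$ via $u\mapsto u'$, $v\mapsto u$ and the identity on $M$; consequently $d_{G'}(u',z)=d_{G_3}(u,z)$ and $d_{G'}(u,z)=d_{G_3}(v,z)$ for $z\in M$. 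From this dictionary one reads off that, in passing from $G$ to $G'$, the distance of every pair consisting of a vertex of $V(G_2)$ and a vertex of $M$ increases by exactly $1$, the distance of every pair consisting of a vertex of $V(G_1)$ and a vertex of $M$ changes by at least $-1$, and all middle--middle and non-middle pairs are unchanged. Summing these contributions and using $\phi>0$ yields $\mu_\alpha(G')-\mu_\alpha(G)\ge\Gamma$, and the symmetric relabelling $G_1\leftrightarrow G_2$, $u\leftrightarrow v$, $u'\leftrightarrow v'$ yields $\mu_\alpha(G'')-\mu_\alpha(G)\ge-\Gamma$.

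If $\Gamma>0$ then $\mu_\alpha(G')\ge\mu_\alpha(G)+\Gamma>\mu_\alpha(G)$, and if $\Gamma<0$ then $\mu_\alpha(G'')\ge\mu_\alpha(G)-\Gamma>\mu_\alpha(G)$, so in either case we are done. The delicate case, which I expect to be the main obstacle, is $\Gamma=0$, where both displayed bounds degenerate to ``$\ge$'' and strictness must be extracted from the slack in the ``$\ge-1$'' estimates. Writing the slack for $G'$ as $x^\top(D_\alpha(G')-D_\alpha(G))x-\Gamma=\sum_{w\in V(G_1)}\sum_{z\in M}(\delta_{wz}+1)\phi(w,z)\ge0$, where $\delta_{wz}$ denotes the distance change of the pair $\{w,z\}$, strict inequality for $G'$ holds as soon as one term exceeds its floor. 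The only way this slack can vanish is that every term is zero; in particular $\delta_{uz}=d_{G_3}(v,z)-d_{G_3}(u,z)=-1$ for all $z\in M$, i.e.\ every middle vertex is strictly closer to $v$ than to $u$. But in that situation the corresponding boundary term of the $G''$-slack, namely $\sum_{z\in M}(d_{G_3}(u,z)-d_{G_3}(v,z)+1)\phi(v,z)$, equals $\sum_{z\in M}2\phi(v,z)>0$, so $\mu_\alpha(G'')>\mu_\alpha(G)$. Hence when $\Gamma=0$ at least one of $G',G''$ strictly increases the distance $\alpha$-spectral radius, which completes the proof. The real work, then, is the distance bookkeeping behind the block isomorphism and the uniform ``$\ge-1$'' bound (checking that no shorter detour through the edge $uv$ or through $G_2$ appears), together with this $\Gamma=0$ tie-break.
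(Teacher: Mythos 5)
Your proof is correct, and its skeleton --- the quantity $\Gamma$, the two Rayleigh bounds $\mu_\alpha(G')-\mu_\alpha(G)\ge\Gamma$ and $\mu_\alpha(G'')-\mu_\alpha(G)\ge-\Gamma$ coming from the same distance bookkeeping, and the case split on the sign of $\Gamma$ --- is exactly the paper's. The one substantive divergence is the tie-break. The paper treats all of $\Gamma\ge 0$ at once: from $\mu_\alpha(G')\ge\mu_\alpha(G)$ it notes that equality would force $x$ to be the distance $\alpha$-Perron vector of $G'$ as well, and then subtracts the $\alpha$-eigenequations of $G$ and $G'$ at the single vertex $v$, where the distance to every vertex of $V(G_3)\setminus\{u,v\}$ increases by exactly $1$; this gives $0=\sum_{w\in V(G_3)\setminus\{u,v\}}(\alpha x_v+(1-\alpha)x_w)>0$, a contradiction. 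You instead isolate $\Gamma=0$ and analyze the Rayleigh-quotient slack directly: if the $G'$ slack vanishes then $d_{G_3}(v,z)=d_{G_3}(u,z)-1$ for every middle vertex $z$, which makes the $w=v$ terms of the $G''$ slack equal to $2\phi(v,z)>0$ and hence $\mu_\alpha(G'')>\mu_\alpha(G)$. Both mechanisms are valid; the paper's eigenequation trick is shorter and needs only one vertex's equation, while yours stays entirely inside the quadratic form and identifies exactly which distance configurations are extremal. Your explicit block-isomorphism dictionary ($u\mapsto u'$, $v\mapsto u$) also substantiates the ``increases by exactly $1$'' and ``decreases by at most $1$'' claims that the paper asserts in one line.
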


\begin{proof}
Let $x$ be the distance $\alpha$-Perron vector  of $G$. Let
\begin{eqnarray*}
\Gamma&=&\sum_{w\in V(G_2)}\sum_{z\in V(G_3)\setminus\{u,v\} }\left(\alpha\left(x_w^2+x_z^2\right)+2(1-\alpha)x_{w}x_{z}\right)\\
&&- \sum_{w\in V(G_1)}\sum_{z\in V(G_3)\setminus\{u,v\}}\left(\alpha\left(x_w^2+x_z^2\right)+2(1-\alpha)x_{w}x_{z}\right).
\end{eqnarray*}

As we pass from $G$ to $G'$, the distance between a vertex in $V(G_2)$ and a vertex in $V(G_3)\setminus\{u,v\}$ is increased by $1$, the distance between a vertex in $V(G_1)$ and a vertex in $V(G_3)\setminus\{u,v\}$ may be increased, unchanged, or  decreased by  $1$, and the distances between any other vertex pairs remain unchanged. Thus
\begin{eqnarray*}
\mu_{\alpha} (G')-\mu_{\alpha} (G)
&\ge &  x^\top (D_{\alpha}(G')- D_{\alpha}(G))x\\
&\ge&\sum_{w\in V(G_2)}\sum_{z\in V(G_3)\setminus\{u,v\} }\left(\alpha\left(x_w^2+x_z^2\right)+2(1-\alpha)x_{w}x_{z}\right)\\
&&-\sum_{w\in V(G_1)}\sum_{z\in V(G_3)\setminus\{u,v\} }\left(\alpha\left(x_w^2+x_z^2\right)+2(1-\alpha)x_{w}x_{z}\right)\\
&=&\Gamma.
\end{eqnarray*}
If $\Gamma\ge 0$, then
$\mu_{\alpha} (G')-\mu_{\alpha} (G)\ge 0$,
i.e., $\mu_{\alpha} (G)\le \mu_{\alpha} (G')$. If $\mu_{\alpha} (G)=\mu_{\alpha} (G')$, then $\mu_{\alpha} (G')=x^{\top}D_{\alpha}(G')x$, implying that $x$ is the distance
$\alpha$-Perron vector of $G'$. By the $\alpha$-eigenequations of $G$ and $G'$ at $v$, we have
\begin{eqnarray*}
0&=& \mu_{\alpha}(G')x_v-\mu_{\alpha}(G)x_v\\
&=&\sum_{w\in V(G_3)\setminus\{u,v\}} \left(d_{G'}(v,w)-d_G(v,w)\right)(\alpha x_v+(1-\alpha)x_w)\\
&=&\sum_{w\in V(G_3)\setminus\{u,v\}} (\alpha x_v+(1-\alpha)x_w)\\
&>& 0,
\end{eqnarray*}
a contradiction. Thus, if $\Gamma\ge 0$, then $\mu_{\alpha} (G)<\mu_{\alpha} (G')$.

Suppose that $\Gamma<0$. As earlier, we have
\begin{eqnarray*}
\mu_{\alpha}(G'')-\mu_{\alpha}(G)
&\ge &  x^\top (D_{\alpha}(G'')-D_{\alpha}(G))x \\
&\ge &\sum_{w\in V(G_1)}\sum_{z\in V(G_3)\setminus\{u,v\} }\left(\alpha\left(x_w^2+x_z^2\right)+2(1-\alpha)x_{w}x_{z}\right)\\
&&-\sum_{w\in V(G_2)}\sum_{z\in V(G_3)\setminus\{u,v\} }\left(\alpha\left(x_w^2+x_z^2\right)+2(1-\alpha)x_{w}x_{z}\right)\\
&=& -\Gamma\\
&>&0,
\end{eqnarray*}
and thus $\mu_{\alpha}(G)<\mu_{\alpha}(G'')$.
\end{proof}

A weak version of previous theorem for $\alpha=\frac{1}{2}$ has been established in \cite{LZ}.

For nonnegative integers $p$, $q$ and a graph $G$ with $u,v\in V(G)$, let $G_{u,v}(p,q)$  be the graph $H(v;q)$ with $H=G(u;p)$.

Similar versions for the following corollary have been given for $\alpha=0,\frac{1}{2}$ in \cite{Go,LL}.

\begin{Corollary}\label{adpq} Let $H$ be a connected graph of order at least $3$ with $uv\in E(H)$. Suppose that $\eta(u)=v$
for some automorphism $\eta$  of $G$.
For $p\ge q\ge 1$, 
we have
 $\mu_{\alpha}(H_{u,v}(p,q))<\mu_{\alpha}(H_{u,v}(p+1,q-1))$.
\end{Corollary}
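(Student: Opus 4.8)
The plan is to realize $H_{u,v}(p,q)$ as an instance of the configuration in Theorem~\ref{444}, to identify the two transformed graphs as two different ``rebalancings'' of the pendant paths, and then to convert the dichotomy of Theorem~\ref{444} into the desired one-sided inequality using the symmetry provided by $\eta$ together with an induction on the imbalance $p-q$.

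First I would set $G=H_{u,v}(p,q)$ and write its two pendant paths as $P=uu_1\cdots u_p$ (at $u$) and $Q=vv_1\cdots v_q$ (at $v$). I apply Theorem~\ref{444} with $G_1=P$, $G_2=Q$, $G_3=H$, and with $u'=u_1$, $v'=v_1$, the unique neighbours of $u$ in $G_1$ and of $v$ in $G_2$. The hypotheses hold: $G_1-u$ and $G_2-v$ are nontrivial since $p,q\ge 1$, $G_3-u-v=H-u-v$ is nontrivial since $|V(H)|\ge 3$, and $uv\in E(H)=E(G_3)$. Writing $A=N_H(u)\setminus\{v\}$ and $B=N_H(v)\setminus\{u\}$, the construction produces $G'$, in which the neighbours $A$ are transferred from $u$ to $u_1$ and the neighbours $B$ from $v$ to $u$, and $G''$, the mirror transfer toward the $v$-side.

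The technical heart is to identify these graphs: I claim $G'\cong H_{u,v}(p-1,q+1)$ and $G''\cong H_{u,v}(p+1,q-1)$. For $G'$ I would exhibit the relabelling that sends $u_1\mapsto u$, so that $u_2\cdots u_p$ becomes the length-$(p-1)$ pendant path at $u$, and sends $u\mapsto v$, so that the old edge $uv$ together with $v,v_1,\dots,v_q$ becomes the length-$(q+1)$ pendant path at $v$, while fixing every remaining vertex of $H$. One checks directly that this map carries the edges $\{u_1w:w\in A\}$ to $\{uw:w\in A\}$ and the edges $\{uw:w\in B\}$ to $\{vw:w\in B\}$, preserves the internal edges of $H$, and matches the two pendant paths of the target; the isomorphism $G''\cong H_{u,v}(p+1,q-1)$ follows from the symmetric relabelling. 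The main obstacle is exactly this bookkeeping, namely verifying that the edge transfers reproduce the pendant-path structure of the target, with care taken over vertices lying in $A\cap B$ (adjacent to both $u$ and $v$).

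Finally I would record that $\eta$ shows $u$ and $v$ play symmetric roles: extending the automorphism to the attached paths gives $H_{u,v}(a,b)\cong H_{u,v}(b,a)$, hence $\mu_{\alpha}(H_{u,v}(a,b))=\mu_{\alpha}(H_{u,v}(b,a))$ for all $a,b$. Now Theorem~\ref{444} yields $\mu_{\alpha}(G)<\mu_{\alpha}(H_{u,v}(p-1,q+1))$ or $\mu_{\alpha}(G)<\mu_{\alpha}(H_{u,v}(p+1,q-1))$, and I promote this to the stated inequality by induction on $p-q\ge 0$. The second alternative is the conclusion, so suppose only the first holds. When $p=q$, symmetry identifies $\mu_{\alpha}(H_{u,v}(p-1,q+1))$ with $\mu_{\alpha}(H_{u,v}(p+1,q-1))$, giving the result; when $p=q+1$, symmetry identifies it with $\mu_{\alpha}(H_{u,v}(p,q))$ itself, a contradiction; and when $p-q\ge 2$ the pair $(p-1,q+1)$ still satisfies $p-1\ge q+1\ge 1$, so the induction hypothesis gives $\mu_{\alpha}(H_{u,v}(p-1,q+1))<\mu_{\alpha}(H_{u,v}(p,q))$, again contradicting the first alternative. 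In every case $\mu_{\alpha}(H_{u,v}(p,q))<\mu_{\alpha}(H_{u,v}(p+1,q-1))$, as required.
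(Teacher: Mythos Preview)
Your proposal is correct and follows essentially the same approach as the paper: both apply Theorem~\ref{444} with $G_1=P$, $G_2=Q$, $G_3=H$, $u'=u_1$, $v'=v_1$, identify $G'\cong H_{u,v}(p-1,q+1)$ and $G''\cong H_{u,v}(p+1,q-1)$, and then use the symmetry $H_{u,v}(a,b)\cong H_{u,v}(b,a)$ coming from $\eta$ to resolve the dichotomy. The only difference is cosmetic: you package the final step as an induction on $p-q$, whereas the paper argues by repeatedly applying the dichotomy \eqref{Co2} toward balance until reaching a closed chain of strict inequalities (splitting into the cases $p\equiv q$ and $p\not\equiv q\pmod 2$); your induction is a cleaner formulation of the same descent, and in fact your handling of the inductive step (applying the hypothesis to $(p-1,q+1)$ to get $\mu_{\alpha}(H_{u,v}(p-1,q+1))<\mu_{\alpha}(H_{u,v}(p,q))$ directly) avoids the parity case split altogether.
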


\begin{proof}  Let $G=H_{u,v}(p,q)$.  Let $P=uu_1\dots u_p$ and $Q=vv_1\dots v_q$ be two pendant paths of lengths $p$ and $q$ in $G$ at $u$ and $v$, respectively.
Using the notations of Theorem~\ref{444} with $G_1=P$, $G_2=Q$, $G_3=H$, $u'=u_1$ and $v'=v_1$,  we have $G'\cong H_{u,v}(p-1,q+1)$ and $G''\cong H_{u,v}(p+1,q-1)$, and thus
by Theorem~\ref{444},
we have
\begin{eqnarray}\label{Co2}
\mu_{\alpha} (H_{u,v}(p,q))<\max\{\mu_{\alpha}(H_{u,v}(p-1,q+1)), \mu_{\alpha}(H_{u,v}(p+1,q-1))\}.
\end{eqnarray}

If $p=q$ ($p=q+1$, respectively), then $H_{u,v}(p-1,q+1)\cong H_{u,v}(p+1,q-1)$ ($H_{u,v}(p,q)\cong H_{u,v}(p-1,q+1)$, respectively)
as  $\eta(u)=v$
for some automorphism $\eta$  of $G$,
and thus from \eqref{Co2}, we have
$\mu_{\alpha} (G)<\mu_{\alpha}(H_{u,v}(p+1,q-1))$.
Suppose that $p\ge q+2$ and  $\mu_{\alpha} (G)<\mu_{\alpha}(H_{u,v}(p-1,q+1))$.

 If $p\not\equiv q \pmod 2$, then by using (\ref{Co2}) repeatedly, we have
\begin{eqnarray*}
\mu_{\alpha} (G)&\le&\mu_{\alpha}\left(H_{u,v}\left(\frac{p+q+3}{2},\frac{p+q-3}{2}\right)\right)\\
&<&\mu_{\alpha}\left(H_{u,v}\left(\frac{p+q+1}{2},\frac{p+q-1}{2}\right)\right)\\
&<& \mu_{\alpha} \left(H_{u,v}\left(\frac{p+q+3}{2},\frac{p+q-3}{2}\right)\right),
\end{eqnarray*}
which is impossible. If $p\equiv q \pmod 2$, then by using (\ref{Co2}) repeatedly, we have
\begin{eqnarray*}
\mu_{\alpha} (G)&\le& \mu_{\alpha}\left(H_{u,v}\left(\frac{p+q}{2}+1,\frac{p+q}{2}-1\right)\right)\\
&<&\mu_{\alpha}\left(H_{u,v}\left(\frac{p+q}{2},\frac{p+q}{2}\right)\right)\\
&<& \mu_{\alpha} \left(H_{u,v}\left(\frac{p+q}{2}-1,\frac{p+q}{2}+1\right)\right),
\end{eqnarray*}
which is also impossible. Therefore
$\mu_{\alpha} (H_{u,v}(p,q))<\mu_{\alpha} (H_{u,v}(p+1,q-1))$.
\end{proof}


\section{Graphs with small distance $\alpha$-spectral radius}

In this section, we will determine the graphs with minimum distance $\alpha$-spectral radius among  trees and unicyclic graphs.



\begin{Theorem}\label{tmax} Let $G$ be a tree of order $n\geq 4$.
Then $\mu_{\alpha}(G)\geq \mu_{\alpha}(S_n)$ with equality if and only if $G\cong S_n$.
\end{Theorem}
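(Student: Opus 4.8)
The plan is to start from an arbitrary tree $T$ of order $n\ge 4$ with $T\not\cong S_n$ and to collapse it down to $S_n$ through a finite chain of the edge-identification transformations $G\mapsto G_{uv}$ of Theorem~\ref{new1}, each of which strictly lowers the distance $\alpha$-spectral radius. The organizing observation is that the star is exactly the tree of diameter at most $2$; hence any tree $T\not\cong S_n$ has diameter $d\ge 3$, and it suffices to produce one admissible transformation for every such tree and then iterate.

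First I would locate an edge to which the transformation applies. Take a longest path $v_0v_1\cdots v_d$ of $T$. Its endpoints $v_0,v_d$ are leaves (otherwise the path could be extended), so $v_1$ is a quasi-pendant vertex. Since $d\ge 3$, both $v_1$ and $v_2$ have degree at least $2$, so $v_1v_2$ is a non-pendant cut edge (every edge of a tree is a cut edge) with a quasi-pendant endpoint. Theorem~\ref{new1} then gives a graph $T'=T_{v_1v_2}$ with $\mu_{\alpha}(T)>\mu_{\alpha}(T')$; moreover, identifying the ends of a cut edge glues two trees at one vertex and reattaching a pendant restores the count, so $T'$ is again a tree of order $n$. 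Iterating, as long as the current tree differs from $S_n$ (equivalently has diameter $\ge 3$) the longest-path construction supplies another admissible edge, so the transformation applies again and $\mu_{\alpha}$ strictly drops. Because there are only finitely many trees on $n$ vertices and $\mu_{\alpha}$ is strictly decreasing along the sequence, no tree can recur, so the sequence terminates; it can terminate only at a tree admitting no such edge, and the argument above shows every tree of diameter $\ge 3$ admits one. Hence the terminal tree has diameter at most $2$, i.e. it is $S_n$. Chaining the strict inequalities yields $\mu_{\alpha}(T)>\mu_{\alpha}(S_n)$ whenever $T\not\cong S_n$, with equality trivially when $T\cong S_n$.

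The explicit value of $\mu_{\alpha}(S_n)$ (the larger root of the $2\times 2$ quotient matrix on the center/leaf orbits) is not needed here. The points that require care are checking the hypotheses of Theorem~\ref{new1} at every step — that a non-pendant cut edge with a quasi-pendant endpoint exists while the diameter is at least $3$, which the longest-path argument guarantees — and justifying termination. If one prefers an explicit monovariant to the finiteness-of-trees argument, each transformation strictly decreases the transmission $\sigma$: writing $G_1,G_2$ for the two sides of the cut edge, the $(|V(G_1)|-1)\,|V(G_2)|$ cross-distances each fall by $1$ while only $|V(G_1)|-1$ distances to the new pendant rise by $1$, so $\Delta\sigma=(|V(G_1)|-1)(1-|V(G_2)|)<0$ since $|V(G_1)|,|V(G_2)|\ge 2$, which forces termination directly.
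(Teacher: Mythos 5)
Your argument is correct and rests on exactly the same key step as the paper's proof: on a tree of diameter $d\ge 3$ one takes a diametral path $v_0v_1\cdots v_d$, notes that $v_1$ is quasi-pendant and $v_1v_2$ is a non-pendant cut edge, and applies the transformation $G\mapsto G_{v_1v_2}$ of Theorem~\ref{new1} to strictly decrease $\mu_{\alpha}$. The only difference is packaging --- the paper runs this as an extremal argument (a minimizer cannot have diameter $\ge 3$), while you iterate the transformation and justify termination via finiteness of trees (your $\sigma$-monovariant is a valid alternative but unnecessary) --- which is an immaterial variation.
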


\begin{proof}  Let $G$ be a tree of order $n$ with minimum distance $\alpha$-spectral radius.
Let $d$ be the diameter of $G$. Obviously, $d\geq 2$. Suppose that $d\ge 3$. Let $v_0v_1\dots v_d$ be a diametral path of $G$. Note that $v_1$ is a quasi-pendant vertex in $G$. By Theorem~\ref{new1}, $\mu_{\alpha}(G_{v_1v_2})<\mu_{\alpha}(G)$, a contradiction. Thus $d= 2$, i.e., $G\cong S_n$.
\end{proof}

In Theorem~\ref{tmax}, the case  $\alpha=0$ has been known in \cite{SI} and the case  $\alpha=\frac{1}{2}$ has been known in \cite{LZ,XZL}.

For $n-1\ge 3$ and $1\le a\le \left \lfloor \frac{n-2}{2}\right \rfloor$, let $D_{n,a}$ be the tree obtained from vertex-disjoint $S_{a+1}$ with center $u$ and $S_{n-a-1}$ with center $v$ by adding an edge $uv$.
Let $T$ be a tree of order $n$ with minimum distance $\alpha$-spectral radius, where $T\ncong S_n$.
Let $d$ be the diameter of $T$. Obviously, $d\geq 3$. Suppose that $d\geq 4$. Let $v_0v_1\dots v_d$ be a diametral path of $T$. Note that $v_1$ is a quasi-pendant vertex in $T$ and $T_{v_1v_2} \ncong S_n$. By Theorem~\ref{new1}, $\mu_{\alpha}(T_{v_1v_2})<\mu_{\alpha}(T)$, a contradiction. Thus $d=3$, implying that $T\cong D_{n,a}$ for some $a$ with $1\leq a\leq \lfloor\frac{n-2}{2}\rfloor$.


%

\begin{Lemma}\label{tn}\cite{XZL} Let $G$ be a unicyclic graph of order $n\ge 6$  different from $S_n^{+}$, where $S_n^{+}$ is the graph obtained from $S_n$ by adding an edge between two vertices of degree one. Then
\[\sigma (G)\ge n^2-n-4>\sigma (S_n^{+})=n^2-2n.\]
\end{Lemma}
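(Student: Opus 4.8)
The plan is to prove this as a combinatorial transmission inequality by carefully accounting for how the sum of pairwise distances behaves for a unicyclic graph whose structure differs from the extremal graph $S_n^+$. First I would recall that $S_n^+$ has exactly one triangle, with the two triangle vertices each adjacent to all $n-2$ pendant vertices structure; more precisely $S_n^+$ consists of a star $S_n$ with one extra edge, so it has a center of degree $n-1$ and a triangle, giving $\sigma(S_n^+)=n^2-2n$ by a direct count (the center is at distance $1$ from everyone, contributing $n-1$; all other pairs are at distance $2$ except the one triangle edge at distance $1$). I would verify this base computation first, since it anchors the strict inequality on the right.

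The main body of the argument is to show $\sigma(G)\ge n^2-n-4$ for every unicyclic $G$ on $n\ge 6$ vertices with $G\ncong S_n^+$. I would organize this by the \emph{diameter} $d$ of $G$, since transmission grows with diameter. If $d\ge 3$, I expect a clean lower bound: a unicyclic graph has exactly $n$ edges, so at most $n$ unordered pairs are at distance $1$, and among the remaining $\binom{n}{2}-n$ pairs at least one pair realizes distance $d\ge 3$ while the rest are at distance $\ge 2$; summing gives $\sigma(G)\ge 1\cdot n + 2\cdot(\binom{n}{2}-n-1)+3 = n^2-n-... $ after simplification, which I would check clears the threshold $n^2-n-4$ for $n\ge 6$. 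The delicate regime is $d=2$, where $G$ has small transmission and is "close" to $S_n^+$. Here every non-adjacent pair is at distance exactly $2$, so $\sigma(G)=2\binom{n}{2}-m=2\binom{n}{2}-n=n^2-2n$ where $m=n$ is the number of edges. This exactly equals $\sigma(S_n^+)$, so the key point is to characterize \emph{which} diameter-$2$ unicyclic graphs exist and show $S_n^+$ is the unique one, forcing every other $G$ into the $d\ge 3$ case where the bound $n^2-n-4$ holds with room to spare.

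Thus the crux reduces to a structural claim: among connected unicyclic graphs on $n\ge 6$ vertices, the only one of diameter $\le 2$ is $S_n^+$. I would argue this by noting that a diameter-$2$ graph with a cycle must have its cycle be a triangle (a $4$-cycle already has two opposite vertices at distance $2$, but adding the pendant-vertex count and requiring \emph{all} pairs within distance $2$ forces strong adjacency), and then show the unique tree on the cycle vertices plus pendants compatible with diameter $2$ and exactly $n$ edges is the book/friendship-type graph, which for unicyclic graphs collapses to $S_n^+$. The hard part will be this case analysis on the cycle length $g$ and the distribution of pendant vertices: I must rule out diameter-$2$ unicyclic graphs with $g=3$ but a different pendant distribution, and rule out $g\ge 4$ entirely. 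Once the structural claim is settled, the remaining diameter-$\ge 3$ estimate is routine counting, and assembling the two cases yields $\sigma(G)\ge n^2-n-4>n^2-2n=\sigma(S_n^+)$ for all $G\ncong S_n^+$, completing the proof.
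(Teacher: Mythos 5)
The paper does not prove this lemma at all; it is quoted from \cite{XZL}, so there is no internal proof to compare against. Judged on its own, your proposal has a genuine gap in the case that carries all the weight. Your reduction of the diameter-$2$ case is fine: for $n\ge 6$ the only unicyclic graph of diameter at most $2$ is indeed $S_n^+$ (girth $\ge 6$ forces diameter $\ge 3$; girth $4$ or $5$ with $n\ge 6$ forces a pendant vertex at distance $3$ from some cycle vertex; girth $3$ forces all non-cycle vertices to be pendants on a single triangle vertex). The computation $\sigma(S_n^+)=n^2-2n$ is also correct. The problem is the "routine counting" you propose for diameter $d\ge 3$: a unicyclic graph has $m=n$ edges, so your estimate gives
\[
\sigma(G)\;\ge\; n+2\left(\binom{n}{2}-n-1\right)+3\;=\;n^2-2n+1,
\]
which falls short of the target $n^2-n-4$ by exactly $n-5>0$ for $n\ge 6$. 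So the step you flagged as "I would check this clears the threshold" in fact fails.

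To see why no crude count can work, write $\sigma(G)=2\binom{n}{2}-m+\sum_{d_G(u,v)\ge 3}\left(d_G(u,v)-2\right)=n^2-2n+\sum_{d_G(u,v)\ge 3}\left(d_G(u,v)-2\right)$. The lemma is therefore equivalent to showing that every unicyclic $G\ncong S_n^+$ has total "excess distance" $\sum_{d\ge 3}(d-2)\ge n-4$, i.e., on the order of $n$ pairs at distance at least $3$, not just one. This bound is sharp: the triangle with $n-4$ pendant vertices at one cycle vertex and one pendant vertex at another has exactly $n-4$ pairs at distance $3$ and attains $\sigma=n^2-n-4$, as does $C_4$ with $n-4$ pendant vertices at one vertex. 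So the real content of the lemma lives entirely in the $d\ge 3$ regime and requires a structural case analysis (on the girth and on how the pendant trees are distributed around the cycle) to exhibit at least $n-4$ units of excess distance in every non-extremal configuration. Your outline treats precisely this part as routine, which is where the proof is missing.
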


\begin{Theorem} \label{2m} Let $G$ be a  unicyclic graph of order  $n\ge 8$. Then $\mu_{\alpha}(G)\ge \mu_{\alpha}(S_n^{+})$ with equality if and only if $G\cong S_n^{+}$.
\end{Theorem}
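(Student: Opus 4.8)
The plan is to combine the transmission lower bound of Lemma~\ref{xz} with the transmission estimate for unicyclic graphs in Lemma~\ref{tn}, and to play these off against the elementary upper bound $\mu_{\alpha}(G)\le T_{\max}(G)$ (strict unless $G$ is transmission regular) recorded earlier in the excerpt. The key realization is that these three facts, together with one arithmetic inequality, already force $S_n^+$ to be the strict minimizer, so no new spectral computation is needed.

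First I would record the relevant data for $S_n^{+}$. Writing $c$ for the center, $a,b$ for the two adjacent degree-$2$ vertices forming the triangle, and $p_1,\dots,p_{n-3}$ for the pendant vertices, a direct count of distances gives $T_{S_n^+}(c)=n-1$, $T_{S_n^+}(a)=T_{S_n^+}(b)=2n-4$, and $T_{S_n^+}(p_i)=2n-3$, so that $T_{\max}(S_n^+)=2n-3$ (and $\sigma(S_n^+)=n^2-2n$, consistent with Lemma~\ref{tn}). Since $S_n^+$ is plainly not transmission regular, the upper bound yields the strict estimate $\mu_{\alpha}(S_n^+)<T_{\max}(S_n^+)=2n-3$.

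Next, for any unicyclic $G\not\cong S_n^+$ of order $n\ge 8$, Lemma~\ref{xz} followed by Lemma~\ref{tn} gives
\[
\mu_{\alpha}(G)\ge\frac{2\sigma(G)}{n}\ge\frac{2(n^2-n-4)}{n}=2n-2-\frac{8}{n}.
\]
The decisive elementary observation is that $2n-2-\frac{8}{n}\ge 2n-3$ precisely when $n\ge 8$ (with equality exactly at $n=8$). Chaining these inequalities gives $\mu_{\alpha}(G)\ge 2n-3>\mu_{\alpha}(S_n^+)$ for every unicyclic $G\not\cong S_n^+$, which simultaneously establishes the bound $\mu_{\alpha}(G)\ge\mu_{\alpha}(S_n^+)$ and the uniqueness of the extremal graph.

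Since every inequality invoked is already available, there is no serious analytic obstacle; the only delicate point is the arithmetic threshold. The inequality $2n-2-\frac{8}{n}\ge 2n-3$ is exactly what forces the hypothesis $n\ge 8$, and it is what upgrades the nonstrict transmission bound of Lemma~\ref{tn} into the strict spectral separation we need. I would therefore take care to verify it explicitly and to emphasize that the remaining strictness $\mu_{\alpha}(S_n^+)<T_{\max}(S_n^+)$ comes solely from $S_n^+$ failing to be transmission regular. (For $n<8$ this route collapses, since the threshold inequality reverses, so a direct comparison of the small candidate graphs would be required; but that case lies outside the stated hypothesis.)
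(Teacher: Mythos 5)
Your proposal is correct and follows essentially the same route as the paper: the lower bound $\mu_{\alpha}(G)\ge 2\sigma(G)/n \ge 2(n^2-n-4)/n$ from Lemmas~\ref{xz} and~\ref{tn}, the strict upper bound $\mu_{\alpha}(S_n^{+})<T_{\max}(S_n^{+})=2n-3$ from the row-sum bound (strict since $S_n^{+}$ is not transmission regular), and the arithmetic threshold $2(n^2-n-4)/n\ge 2n-3$ for $n\ge 8$. Your explicit verification of the transmissions of $S_n^{+}$ is a welcome addition but does not change the argument.
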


\begin{proof}  Suppose that  $G\ncong S_n^{+}$. We only need to show that  $\mu_{\alpha}(G)>\mu_{\alpha}(S_n^{+})$.

By Lemmas~\ref{xz} and \ref{tn}, we have
\[
\mu_{\alpha}(G)\ge \frac{2\sigma (G)}{n}\ge \frac{2(n^2-n-4)}{n}.\]
As $\mu_{\alpha}(G)$ is bound above by the maximum row sum of $D_{\alpha}(G)$, and it is attained if and only if all row sums of $D_{\alpha}(G)$ are equal \cite[p. 24,
Theorem 1.1]{HM}. Thus
\[
\mu_{\alpha}(S_n^{+})< T_{\max}(S_n^{+})=2n-3.
\]
Since  $n\ge 8$, we have
\[
\mu_{\alpha}(G)\ge \frac{2(n^2-n-4)}{n}\ge 2n-3>\mu_{\alpha}(S_n^{+}),
\]
as desired.
\end{proof}

The result in  Theorem~\ref{2m} for  $\alpha=0, \frac{1}{2}$ has been known in \cite{Yu2,XZL}.

\section{Graphs with large distance $\alpha$-spectral radius}

In this section, we will  determine the graphs with maximum  distance $\alpha$-spectral radius among some classes of graphs.
For examples, we determine the unique connected graphs of order $n\ge 4$ with maximum and second maximum distance $\alpha$-spectral radius, respectively in Theorem~\ref{gen} and the unique graph with maximum distance $\alpha$-spectral radius among connected graphs with fixed clique number in Theorem~\ref{888}.

For $2\le \Delta\le n-1$, let  $B_{n,\Delta}$ be a tree obtained by attaching $\Delta-1$ pendant vertices to a terminal vertex of the path $P_{n-\Delta+1}$. In particular, $B_{n,2}=P_n$ and $B_{n, n-1}=S_n$. The following theorem for $\alpha=0,\frac{1}{2}$ was given in \cite{SI,LZ} for trees.

\begin{Theorem}\label{max degree}
Let $G$ be a connected graph of order $n\ge 5$  with maximum degree $\Delta$, where $2\le \Delta\le n-1$. Then $\mu_{\alpha} (G)\leq \mu_{\alpha} (B_{n,\Delta})$ with equality if and only if $G\cong B_{n,\Delta}$.
\end{Theorem}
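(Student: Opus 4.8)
The plan is to show that any extremal graph is forced, through a chain of strictly $\mu_\alpha$-increasing transformations, to be a tree, then a \emph{spider} centred at a vertex of degree $\Delta$, and finally the \emph{broom} $B_{n,\Delta}$, all the while keeping the maximum degree equal to $\Delta$. Let $G$ be a connected graph of order $n$ with maximum degree $\Delta$ maximizing $\mu_\alpha$, and fix a vertex $w$ with $d_G(w)=\Delta$. First I would show $G$ is a tree. If $G$ contains a cycle $C$, then $|E(C)|=|V(C)|\ge 3$ while $w$ is incident to at most two edges of $C$, so some edge $e\in E(C)$ is not incident to $w$. Then $G-e$ is connected, $w$ still has degree $\Delta$ (so the maximum degree is still $\Delta$), and by Lemma~\ref{ad} we get $\mu_\alpha(G-e)>\mu_\alpha(G)$, contradicting extremality. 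Hence $G$ is a tree.

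Next I would show $G$ is a spider with centre $w$, i.e.\ $w$ is its only vertex of degree at least $3$. Suppose otherwise, and among the branching vertices different from $w$ pick one, say $v$, at maximum distance from $w$. By this maximality, every branch at $v$ other than the one containing $w$ is a pendant path, and there are at least two such pendant paths because $d_G(v)\ge 3$. Taking $H$ to be the tree with these pendant paths removed and applying Corollary~\ref{pq} repeatedly at $v$, I can merge two pendant paths of lengths $p\ge q\ge 1$ via $H_v(p,q)\to H_v(p+1,q-1)\to\cdots\to H_v(p+q,0)$ into a single pendant path; each step strictly increases $\mu_\alpha$, leaves $w$ and its degree untouched (so the maximum degree remains $\Delta$ and no vertex degree exceeds $2$ along the paths), and lowers $d(v)$. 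Repeating this makes $v$ a vertex of degree $2$, strictly decreasing the number of branching vertices different from $w$ without creating new ones. Iterating contradicts extremality unless $w$ is the unique branching vertex, so $G$ is a spider centred at $w$ with exactly $\Delta$ legs of lengths $\ell_1\ge\cdots\ge\ell_\Delta\ge 1$ and $\sum_i \ell_i=n-1$.

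Finally I would reduce the spider to the broom. If some leg other than the longest has length $q\ge 2$, I apply Corollary~\ref{pq} at $w$ to the longest leg (length $p$) and that leg: since $q-1\ge 1$, the resulting $H_w(p+1,q-1)$ is again a spider with all $\Delta$ legs present, so $w$ keeps degree $\Delta$, while $\mu_\alpha$ strictly increases. Iterating forces every leg but one to have length $1$, leaving one leg of length $n-1-(\Delta-1)=n-\Delta$, which is precisely $B_{n,\Delta}$. Because each transformation used is strict, the maximizer is unique, giving $\mu_\alpha(G)\le\mu_\alpha(B_{n,\Delta})$ with equality if and only if $G\cong B_{n,\Delta}$; the case $\Delta=2$ degenerates, since then $G$ is a path and $B_{n,2}=P_n$.

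The main obstacle will be respecting the maximum-degree constraint at every step. The deletion in the first stage must avoid edges at $w$, the pendant-path merges in the second stage are harmless only because they occur away from $w$, and at $w$ itself in the third stage one may \emph{redistribute} leg lengths (keeping all $\Delta$ legs, hence $d(w)=\Delta$) but must never fully merge two legs, which would drop $w$ below degree $\Delta$. This is exactly why the extremal spider is the maximally unbalanced one, namely $B_{n,\Delta}$, rather than a shorter configuration; verifying that Corollary~\ref{pq} applies with the correct base graph $H$ at each stage is the point requiring the most care.
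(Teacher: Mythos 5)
Your proposal is correct and follows essentially the same route as the paper: reduce to a spanning tree preserving the maximum degree (you do this edge-by-edge via Lemma~\ref{ad}, the paper invokes a max-degree-preserving spanning tree directly), then apply Corollary~\ref{pq} first at a farthest branching vertex to force a spider and then at the centre to force the broom. The only cosmetic slips are that a single application of Corollary~\ref{pq} already contradicts extremality (the iterated merging chains are unnecessary) and that the intermediate merge steps do not lower $d(v)$, only the final one does; neither affects correctness.
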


\begin{proof}   Let $G$ be a  graph with maximum distance $\alpha$-spectral radius among connected graphs of order $n$ with maximum degree $\Delta$.
Obviously, $G$ has a spanning tree $T$ with maximum degree $\Delta$. By Lemma~\ref{ad}, $\mu_{\alpha}(G)\le \mu_{\alpha}(T)$ with equality if and only if $G\cong T$. Thus $G$ is a tree.

It is trivial if $\Delta=2, n-1$.
Suppose that $3\leq \Delta \leq n-2$. We only need to show that $G\cong B_{n,\Delta}$.

Let  $u\in V(G)$ with $d_G(u)=\Delta$.
Suppose that there exists a vertex different from $u$ with degree at least $3$.  Then we may choose such a vertex $w$ of degree at least $3$ such that $d_G(u,w)$ is as large as possible. Obviously, there are two pendant paths, say $P$ and $Q$,  at $w$ of lengths  at least $1$. Let $p$ and $q$ be the lengths of $P$ and $Q$, respectively. Assume that   $p\ge q$.
Let $H=G[V(G)\setminus((V(P)\cup V(Q))\setminus\{w\})]$. Then $G\cong H_w(p,q)$.  Obviously,  $G'=H_w(p+1,q-1)$ is a tree of order $n$ with maximum degree $\Delta$.  By Corollary~\ref{pq}, $\mu_{\alpha}(G)<\mu_{\alpha}(G')$, a contradiction. Then $u$ is the unique vertex of $G$ with degree at least $3$, and thus $G$ consists of $\Delta$ pendant paths, say $Q_1, \dots, Q_{\Delta}$ at $u$. If two of them, say $Q_i$ and $Q_j$ with $i\ne j$ are of lengths at least $2$, then $G\cong H'_u(r,s)$,  where  $H'=G[V(G)\setminus((V(Q_i)\cup V(Q_j))\setminus\{u\})]$, and $r$ and $s$ are the lengths of $Q_i$ and $Q_j$, respectively. Assume that $r\ge s$. Obviously,  $G''=H'_u(r+1,s-1)$ is a tree of order $n$ with maximum degree $\Delta$. By Corollary~\ref{pq}, $\mu_{\alpha} (G)< \mu_{\alpha} (G'')$, also a contradiction. Thus there is exactly one pendant path at $u$ of length at least $2$, implying that $G\cong B_{n,\Delta}$.
\end{proof}

%
%

If $G$ is a connected graph of order $1$ or $2$, then $G\cong P_n$. If $G$ is a connected graph of order $3$, then $G\cong P_3, K_3$, and by Lemma~\ref{ad}, $\mu_{\alpha}(K_3)<\mu_{\alpha}(P_3)$. 

Ruzieh and Powers \cite{RP}  showed that  $P_n$ is the unique connected graph of order $n$ with maximum distance $0$-spectral
radius, and it was proved in \cite{WZ} that $B_{n,3}$  is the unique tree of order $n$ different from $P_n$ with maximum distance $0$-spectral
radius. For $\alpha=\frac{1}{2}$, the following theorem was given in \cite{LZ}.

\begin{Theorem}\label{gen} Let $G$ be a connected graph of order $n\geq 4$, where  $G\not\cong P_n$.
Then $\mu_{\alpha}(G)\leq \mu_{\alpha}(B_{n,3})<\mu_{\alpha}(P_n)$ 
with equality if and only if
$G\cong B_{n,3}$.
\end{Theorem}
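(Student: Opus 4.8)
The plan is to prove the stronger statement that $B_{n,3}$ is the unique graph of maximum $\mu_{\alpha}$ among all connected graphs of order $n$ not isomorphic to $P_n$; the separation $\mu_{\alpha}(B_{n,3})<\mu_{\alpha}(P_n)$ then falls out as a by-product. So I would take $G$ to be a connected graph of order $n$ with $G\not\cong P_n$ attaining the maximum value of $\mu_{\alpha}$, and aim to show $G\cong B_{n,3}$. The argument splits into first reducing to the case that $G$ is a tree, and then locating the extremal tree.

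First I would show the extremal $G$ is a tree. If $G$ is not a tree, fix any spanning tree $T$; rebuilding $G$ from $T$ by adding its chords one at a time and applying Lemma~\ref{ad} at each step gives $\mu_{\alpha}(G)<\mu_{\alpha}(T)$. If some spanning tree $T\not\cong P_n$, then $T$ is a non-path competitor with strictly larger $\mu_{\alpha}$, contradicting the choice of $G$; hence every spanning tree of $G$ must be a path. Here the combinatorial input is the claim that if $d_G(v)\ge 3$ for some $v$, then $G$ has a spanning tree with $d_T(v)\ge 3$: take a spanning tree maximizing $d_T(v)$; if $d_T(v)\le 2$ there is a non-tree edge $vw$, and the unique cycle of $T+vw$ contains an edge not incident to $v$ whose removal raises $d_T(v)$, a contradiction. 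Consequently ``every spanning tree is a path'' forces all degrees to be at most $2$, so $G\cong C_n$. To rule this out I would use that $C_n$ is transmission regular, whence Lemma~\ref{xz} gives $\mu_{\alpha}(C_n)=\frac{2\sigma(C_n)}{n}=\lfloor n^2/4\rfloor$, while the same lemma gives $\mu_{\alpha}(B_{n,3})\ge\frac{2\sigma(B_{n,3})}{n}$; a routine computation of $\sigma(B_{n,3})$ reduces the desired $\mu_{\alpha}(C_n)<\mu_{\alpha}(B_{n,3})$ to a cubic inequality in $n$ that holds for all $n\ge 4$. This contradicts maximality, so $G$ is a tree.

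It remains to identify the extremal tree, which, being distinct from $P_n$, has maximum degree $\Delta\ge 3$. By Theorem~\ref{max degree}, $\mu_{\alpha}(G)\le\mu_{\alpha}(B_{n,\Delta})$ with equality iff $G\cong B_{n,\Delta}$. To pin down $\Delta=3$ I would establish the strict chain $\mu_{\alpha}(B_{n,\Delta})<\mu_{\alpha}(B_{n,\Delta-1})$ for $4\le\Delta\le n-1$: writing $B_{n,\Delta}=H_u(n-\Delta,1)$, where $u$ is the branch vertex and $H$ is the star formed by the remaining $\Delta-2$ pendant edges at $u$, Corollary~\ref{pq} (with $p=n-\Delta\ge 1=q$) yields $\mu_{\alpha}(B_{n,\Delta})<\mu_{\alpha}(H_u(n-\Delta+1,0))=\mu_{\alpha}(B_{n,\Delta-1})$. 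Chaining down shows $\mu_{\alpha}(B_{n,\Delta})\le\mu_{\alpha}(B_{n,3})$ with equality iff $\Delta=3$, so $G\cong B_{n,3}$. The same transformation with $\Delta=3\to 2$ (now $H=P_2$) gives the final separation $\mu_{\alpha}(B_{n,3})<\mu_{\alpha}(B_{n,2})=\mu_{\alpha}(P_n)$.

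Since Theorem~\ref{max degree} is stated for $n\ge 5$, the base case $n=4$ must be checked separately, but it is immediate: the only tree of order $4$ other than $P_4$ is $S_4=B_{4,3}$, the paw, the diamond and $K_4$ all admit $S_4$ as a spanning tree, and $C_4$ is dispatched by the same transmission comparison. I expect the cycle $C_n$ to be the main obstacle: it is the one non-tree that carries no non-path spanning tree, so it evades the edge-deletion reduction entirely and must be beaten by a direct quantitative estimate; ensuring the transmission inequality $\mu_{\alpha}(C_n)<\mu_{\alpha}(B_{n,3})$ survives down to the smallest admissible order is the delicate point.
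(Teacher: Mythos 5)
Your proposal is correct, and it follows the paper's route for everything except the one genuinely delicate sub-case, where it substitutes a different (and arguably cleaner) argument. The reduction to trees via spanning trees and Lemma~\ref{ad}, the identification of the extremal tree via Theorem~\ref{max degree}, and the strict chain $\mu_{\alpha}(B_{n,\Delta})<\mu_{\alpha}(B_{n,\Delta-1})$ down to $\mu_{\alpha}(B_{n,3})<\mu_{\alpha}(P_n)$ via Corollary~\ref{pq} with $B_{n,\Delta}=H_u(n-\Delta,1)$ are exactly the paper's steps (you additionally justify the folklore fact that a graph all of whose spanning trees are paths must be $C_n$, which the paper asserts without proof). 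The divergence is in disposing of $C_n$: the paper takes the distance $\alpha$-Perron vector of $C_n$ (constant by symmetry), rewires $C_n$ into a copy of $B_{n,3}$ by the switch $C_n-u_1u_2-u_2u_3+u_2u_n$, and shows the Rayleigh quotient strictly increases by tracking how each pairwise distance changes; you instead exploit transmission regularity to get the exact value $\mu_{\alpha}(C_n)=\frac{2\sigma(C_n)}{n}=\lfloor n^2/4\rfloor$ from Lemma~\ref{xz} and compare it with the lower bound $\mu_{\alpha}(B_{n,3})\ge\frac{2\sigma(B_{n,3})}{n}$ from the same lemma. I checked the computation you deferred: $\sigma(B_{n,3})=\binom{n-1}{3}+(n-1)(n-2)+2$, so the required inequality $\frac{2\sigma(B_{n,3})}{n}>\frac{n^2}{4}$ is equivalent to $n^3-28n+72>0$, which holds for all $n\ge 4$ (indeed for all $n\ge 1$), so your route closes. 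What each approach buys: the paper's edge-switching argument is self-contained at the level of quadratic forms and generalizes to other rewirings, while yours avoids the bookkeeping of distance changes entirely, is independent of $\alpha$ on the $C_n$ side, and makes the margin explicit; its only cost is that it leans on the sharpness of Lemma~\ref{xz} for transmission-regular graphs, which is already available in the paper. Your separate treatment of $n=4$ is also sound (the paper instead invokes its quasi-pendant contraction theorem there), since the only order-$4$ tree besides $P_4$ is $S_4=B_{4,3}$ and Corollary~\ref{pq} applies to $H_u(1,1)$ with $H=P_2$.
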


\begin{proof}   First suppose that $G$ is a tree.
If $n=4$, then the result follows from Theorem \ref{new1}. Suppose that $n\geq5$.
Let $\Delta$ be the maximum degree of $G$.
Since $G\ncong P_n$, we have $\Delta\geq3$. By Theorem~\ref{max degree},  $\mu_{\alpha}(G)\le \mu_{\alpha}(B_{n,\Delta})$ with equality if and only if $G\cong B_{n, \Delta}$.
By Corollary~\ref{pq}, 
$\mu_{\alpha}(G)\le\mu_{\alpha}(B_{n,\Delta})\le \mu_{\alpha}(B_{n,3})<\mu_{\alpha}(P_n)$ with equalities if and only if $\Delta=3$ and  $G\cong B_{n, \Delta}$, i.e., $G\cong B_{n,3}$.

Now suppose that $G$ is not a tree. Then $G$ contains at least one cycle. If there is a spanning tree $T$ with $T\ncong P_n$, then by Lemma~\ref{ad} and the above argument, we have $\mu_{\alpha}(G)< \mu_{\alpha}(T)\leq \mu_{\alpha}(B_{n,3})$. If any  spanning tree of $G$ is a path, then $G$ is a cycle $C_n$. Now we only need to show that $\mu_{\alpha}(C_n)<\mu_{\alpha}(B_{n,3})$.

Let  $C_n=u_1u_2\dots u_nu_1$ and $T'=C_n-u_1u_2-u_2u_3+u_2u_n$. Obviously, $T'\cong B_{n,3}$.  Let $x$ be the distance $\alpha$-Perron vector  of $C_n$. By Lemma~\ref{ad}, we have $x_{u_1}=\dots=x_{u_{n}}$.
As we pass from $C_n$ to $T'$, the distance between $u_2$ and $u_1$ is increased by $1$, the distance between $u_2$ and $u_i$ with $3\leq i\leq \left\lceil\frac{n+1}{2}\right\rceil$ is increased by $n-2i+3$, the distance between $u_2$ and $u_i$ with $\left \lfloor\frac{n+1}{2}\right\rfloor+2\leq i\leq n$ is decreased by $1$, and the distances between all other vertex pairs are increased or remain unchanged.
Thus
\begin{eqnarray*}
&&\mu_{\alpha}(T')-\mu_{\alpha}(C_n)\\
&=&x^\top (D_{\alpha}(T')-D_{\alpha}(G))x\\
&\ge& \alpha\left(x_{u_2}^2+x_{u_1}^2\right)+2(1-\alpha)x_{u_2}x_{u_1}-\sum_{i=\left \lfloor\frac{n+1}{2}\right\rfloor+2}^{n}\left(\alpha\left(x_{u_2}^2+x_{u_i}^2\right)+2(1-\alpha)x_{u_2}x_{u_i}\right)\\
&&+\sum_{i=3}^{\left \lceil\frac{n+1}{2}\right\rceil}(n-2i+3)\left(\alpha\left(x_{u_2}^2+x_{u_i}^2\right)+2(1-\alpha)x_{u_2}x_{u_i}\right)\\
&=&2x_{u_1}^2\left(1-\left(n-\left \lfloor\frac{n+1}{2}\right\rfloor-1\right)+\sum_{i=3}^{\left \lceil\frac{n+1}{2}\right\rceil}(n-2i+3)\right)\\
&=&2x_{u_1}^2\left (1+\left (n-1-\left\lceil\frac{n+1}{2}\right\rceil\right )\left(\left \lceil\frac{n+1}{2}\right\rceil -2\right)\right )\\
&\ge&2x_{u_1}^2\\
&>&0,
\end{eqnarray*}
and therefore  $\mu_{\alpha}(C_n)<\mu_{\alpha}(B_{n,3})$, as desired.
\end{proof}

A clique of $G$ is a subset of vertices whose induced subgraph is a complete graph, and the clique number of $G$ is the maximum number of vertices in a clique of $G$.
For $2\le \omega\le n$. Let  $Ki_{n,\omega}$ be the graph obtained from a complete graph $K_{\omega}$ and a path $P_{n-\omega}$ by adding an edge between a vertex of $K_\omega$ and a terminal vertex of $P_{n-\omega}$ if $\omega<n$ and let $Ki_{n,\omega}=K_n$ if $\omega=n$. In particular, $Ki_{n,2}\cong P_n$ for $n\ge 2$. The following result for $\alpha=0,\frac{1}{2}$ was given in \cite{NP,LL}.

\begin{Theorem}\label{888}
Let $G$ be a connected graph of order $n\geq2$ with clique number $\omega\geq2$. Then $\mu_{\alpha} (G)\leq\mu_{\alpha} (Ki_{n,\omega})$  with equality if and only if $G\cong Ki_{n,\omega}$.
\end{Theorem}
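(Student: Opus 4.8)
The plan is to show that the graph $G$ maximizing $\mu_{\alpha}$ among connected graphs of order $n$ with clique number $\omega$ must be $Ki_{n,\omega}$. First I would reduce to the case where $G$ is edge-minimal for its clique number: by Lemma~\ref{ad}, adding any edge strictly decreases $\mu_{\alpha}$, so deleting edges increases it. Thus a maximizer should have as few edges as possible subject to containing a clique of size $\omega$ and being connected. This suggests that the extremal graph consists of a single $K_{\omega}$ with a tree structure hanging off it, since any extra edge outside the clique could be removed (unless removing it disconnects $G$ or destroys connectivity). Concretely, I would argue that in the maximizer every edge not inside the maximum clique is a cut edge, so $G$ is the clique $K_{\omega}$ together with trees attached at its vertices.

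Next I would use the graft transformations from Section~4 to consolidate the tree part into a single path attached at one clique vertex. The key tools are Corollary~\ref{pq} and Theorem~\ref{333}: whenever two pendant paths hang at a common vertex, shifting length from the shorter to the longer one strictly increases $\mu_{\alpha}$; and whenever branching occurs at an internal vertex, Theorem~\ref{333} lets us push the branches outward toward a leaf, again strictly increasing $\mu_{\alpha}$. Applying these repeatedly, I would first force all pendant material to collapse into paths, then show that the attachment points on the clique can be merged. Specifically, if pendant paths are attached at two distinct clique vertices $u$ and $v$, I would invoke Theorem~\ref{444} (with $G_3$ taken to be the clique, which contains the edge $uv$ since the clique is complete) to conclude that one of the two reconfigurations strictly increases $\mu_{\alpha}$; iterating moves all the tree mass to a single clique vertex. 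Once everything is a single pendant path at one vertex of $K_{\omega}$, the graph is exactly $Ki_{n,\omega}$.

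The hard part will be verifying that Theorem~\ref{444} applies cleanly at each merging step, since its hypotheses require the subgraphs $G_1-u$, $G_2-v$, and $G_3-u-v$ to be nontrivial; boundary cases where one side has become trivial (for instance when almost all vertices are already consolidated, or when $\omega$ is close to $n$) must be handled separately, probably by a direct comparison or by a degenerate application of Corollary~\ref{pq}. I also need to ensure that each transformation preserves both connectivity and the clique number $\omega$ exactly: the graft moves only relocate pendant paths and never touch the edges inside $K_{\omega}$, so the clique number cannot increase, but I would check that it does not accidentally drop either, which holds because $K_{\omega}$ remains intact throughout. Finally, the strictness of every inequality (guaranteed by Lemma~\ref{ad}, Corollary~\ref{pq}, Theorems~\ref{333} and \ref{444}) yields that $Ki_{n,\omega}$ is the unique maximizer.
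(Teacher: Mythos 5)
Your proposal is correct and follows essentially the same route as the paper: reduce the maximizer to $K_{\omega}$ with a forest attached (Lemma~\ref{ad}), collapse each attached tree to a pendant path (Corollary~\ref{pq}), and merge the pendant paths onto one clique vertex via the two-path graft (the paper packages your Theorem~\ref{444} step as Corollary~\ref{adpq}, whose ``or'' conclusion indeed suffices for the maximality argument exactly as you note). The boundary cases you flag are handled in the paper by treating $\omega=2$ and $\omega=n$ separately at the outset.
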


\begin{proof}  It is trivial if $\omega=n$ and it follows from Theorem~\ref{gen} if $\omega=2$.
Suppose that $3\le \omega\le n-1$. Let  $G$ be a  graph with maximum distance $\alpha$-spectral radius among connected graphs of order $n$ with clique number $\omega$.  We only need to show that $G\cong Ki_{n,\omega}$.

Let $S=\{v_1,\dots ,v_\omega\}$ be a clique of $G$. By Lemma~\ref{ad}, $G-E(G[S])$ is a forest. Let $T_i$ be the component of $G-E(G[S])$ containing $v_i$, where $1\leq i\leq \omega$. For $1\leq i\leq \omega$, by Corollary~\ref{pq},  if $T_i$ is nontrivial, then $T_i$ is a pendant path at $v_i$.
 Note that any two distinct vertices in $G[S]$ are adjacent.
By Corollary~\ref{adpq},  there is  only one nontrivial $T_i$, and thus $G\cong Ki_{n,\omega}$.
\end{proof}

Recall that $Ki_{n,3}$ is the unique unicyclic graph of order $n\ge 3$ with maximum distance $0$-spectral radius \cite{Yu2}, and the unique odd-cycle unicyclic graph of order $n\ge 3$ with maximum distance $\frac{1}{2}$-spectral radius \cite{LZ}.

\begin{Theorem}\label{odd cycle}
Let $G$ be a unicyclic odd-cycle graph of order $n\ge 3$. Then $\mu_{\alpha} (G)\leq \mu (Ki_{n,3})$ with equality if and only if $G\cong Ki_{n,3}$.
\end{Theorem}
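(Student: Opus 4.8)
The plan is to reduce an arbitrary odd-cycle unicyclic graph to a \emph{lollipop} (an odd cycle carrying a single pendant path) and then to shrink the cycle, step by step, down to a triangle, each operation strictly increasing $\mu_\alpha$. Write the unique cycle as $C_g=v_1\cdots v_gv_1$ with $g$ odd, and let the trees of $G-E(C_g)$ attached to the cycle be $T_1,\dots,T_g$. First I would straighten each nontrivial $T_i$ into a pendant path at its cycle vertex by repeatedly applying Corollary~\ref{pq} (the degree-$\ge 3$ branch point of $T_i$ plays the role of $u$), which strictly increases $\mu_\alpha$ unless $T_i$ is already a path. Then I would consolidate all pendant paths onto a single cycle vertex using a branch-moving transformation such as Theorem~\ref{444}, taking $u,v$ to be a pair of adjacent cycle vertices both carrying pendant paths, $G_1,G_2$ those two paths, and $G_3$ the remainder (the rest of the cycle together with the other branches, so $uv\in E(G_3)$ and $G_3-u-v$ is nontrivial once $g\ge 5$); one of the two resulting graphs has strictly larger $\mu_\alpha$ and brings the two paths closer, and iterating leaves the lollipop $L_{n,g}$.

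If $g=3$ there is nothing left to do: $L_{n,3}=Ki_{n,3}$, and moreover a unicyclic graph of girth $3$ has clique number exactly $3$, so Theorem~\ref{888} already yields $\mu_\alpha(G)\le\mu_\alpha(Ki_{n,3})$ with equality iff $G\cong Ki_{n,3}$. The substance of the proof is the girth reduction for $g=2k+1\ge 5$, where I claim $\mu_\alpha(L_{n,2k+1})<\mu_\alpha(L_{n,2k-1})$. The transformation I would use deletes the two cycle vertices $v_{k+1},v_{k+2}$ antipodal to the root $v_1$ (that is, removes the edges $v_kv_{k+1}$, $v_{k+1}v_{k+2}$, $v_{k+2}v_{k+3}$), joins their former neighbours by the new edge $v_kv_{k+3}$ to form a cycle of length $2k-1$, and reattaches $v_{k+1}v_{k+2}$ as a length-$2$ pendant path at the far end of the existing pendant path; the order is preserved and $L_{n,2k+1}$ becomes $L_{n,2k-1}$. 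Iterating collapses $L_{n,2k+1}$ all the way to $L_{n,3}=Ki_{n,3}$, so in the case $g\ge 5$ we obtain $\mu_\alpha(G)\le\mu_\alpha(L_{n,g})<\mu_\alpha(Ki_{n,3})$ (strict), confirming that equality can occur only for $G\cong Ki_{n,3}$.

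To establish the single-step inequality I would work with the Rayleigh quotient and the symmetry of the odd cycle. By Lemma~\ref{auto} the reflection of $C_{2k+1}$ fixing $v_1$ and the entire pendant path is an automorphism of $L_{n,2k+1}$, so its distance $\alpha$-Perron vector $x$ satisfies $x_{v_{1+i}}=x_{v_{2k+2-i}}$; in particular $x_{v_{k+1}}=x_{v_{k+2}}$, and the two symmetric arcs of the cycle contribute in matched pairs. Identifying the two vertex sets through the relabelling above, I would compute $x^{\top}\!\big(D_\alpha(L_{n,2k-1})-D_\alpha(L_{n,2k+1})\big)x$ by bookkeeping exactly which pairwise distances increase and which decrease under the move, using $x_{v_{k+1}}=x_{v_{k+2}}$ (and the analogous equalities) to cancel the terms coming from the two arcs, and conclude that the difference is positive; then $\mu_\alpha(L_{n,2k-1})\ge x^{\top}D_\alpha(L_{n,2k-1})x>x^{\top}D_\alpha(L_{n,2k+1})x=\mu_\alpha(L_{n,2k+1})$.

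The main obstacle is precisely this last step. Unlike the graft transformations of Section~4, shrinking the cycle is not a rearrangement of pendant paths, so none of Corollary~\ref{pq} or Corollary~\ref{adpq} applies directly, and the distance accounting must be carried out by hand. This is also where oddness is indispensable: removing the two endpoints of the antipodal \emph{edge} keeps the cycle odd and keeps the Perron vector symmetric about $v_1$, which is exactly what forces the paired arc-terms to cancel. For an even cycle the reflection through $v_1$ fixes a single antipodal \emph{vertex}, there is no antipodal edge to delete, and successive shrinking stays even and never reaches a triangle; this is consistent with the hypothesis that $G$ have an odd cycle and with $Ki_{n,3}$ being unattainable in the even case.
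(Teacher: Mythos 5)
Your overall strategy (first normalize the trees into pendant paths, consolidate them, then shrink the cycle lollipop by lollipop down to a triangle) is genuinely different from the paper's, but as written it has two real gaps. First, the consolidation step is not covered by the tools you invoke: Theorem~\ref{444} and Corollary~\ref{adpq} both require the two host vertices $u,v$ to be \emph{adjacent} (indeed $uv\in E(G_3)$, resp.\ $uv\in E(H)$ with an automorphism swapping $u$ and $v$), whereas in a general odd-cycle unicyclic graph the nontrivial branches may sit at non-adjacent cycle vertices, and for a cycle of length $\ge 5$ there is in general no automorphism exchanging two prescribed cycle vertices once other branches are present. So you cannot reduce to a single lollipop $L_{n,g}$ by the quoted results alone. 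Second, and more seriously, the entire case $g\ge 5$ rests on the single-step claim $\mu_\alpha(L_{n,2k+1})<\mu_\alpha(L_{n,2k-1})$, which you do not prove. The Rayleigh-quotient bookkeeping you sketch does not obviously close: under your move the two deleted antipodal vertices are reattached at the far end of the pendant path, so their distances to almost every other vertex \emph{increase} substantially, while distances between surviving cycle vertices \emph{decrease}; the quadratic form $x^{\top}\bigl(D_\alpha(L_{n,2k-1})-D_\alpha(L_{n,2k+1})\bigr)x$ therefore has contributions of both signs, and the reflection symmetry $x_{v_{1+i}}=x_{v_{2k+2-i}}$ only pairs up the two arcs — it does not cancel the negative terms coming from the shortened cycle distances. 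Without an explicit estimate this step is an assertion, not a proof.

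The paper sidesteps both difficulties by reversing the order of operations. It applies the edge rotations $G'=G-v_1v_{2k+1}+v_{2k+1}v_{2k-1}$ and $G''=G-v_{2k}v_{2k-1}+v_{2k}v_1$ directly to the extremal graph $G$ with all its trees still attached; a dichotomy on the sign of an auxiliary quantity $\Gamma$ (built from the Perron vector of $G$, in the same spirit as Theorems~\ref{333} and~\ref{444}) guarantees that at least one of the two moves strictly increases $\mu_\alpha$ whenever $k\ge 2$, which contradicts maximality and forces the cycle to be a triangle. Only then does it consolidate the three trees, where Corollary~\ref{pq} and Corollary~\ref{adpq} apply without further hypotheses because any two triangle vertices are adjacent and can be swapped by an automorphism. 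Your observation that the $g=3$ endgame also follows from Theorem~\ref{888} via the clique-number $3$ of a girth-$3$ unicyclic graph is correct and is a nice shortcut for that subcase, but it does not repair the two gaps above.
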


\begin{proof}  If $n=3,4$, the result is trivial.
Suppose that $n\geq5$. Let $G$ be a graph with maximum distance $\alpha$-spectral radius among unicyclic odd-cycle graphs of order $n$. We only need to show that
$G\cong Ki_{n,3}$.

Let $C=v_1 \dots v_{2k+1}v_1$ be the unique cycle of $G$, where $k\ge1$. Let $T_i$ be the component of $G-E(C)$ containing $v_i$ for $1\le i\le 2k+1$. Let $U_1= V(T_{2k})\cup V(T_{2k+1})$, $U_2=\cup_{k+1\le i\le 2k-1} V(T_{i})$ and $U_3=\cup_{1\le i\le k-1} V(T_{i})$. Let $x$ be the distance $\alpha$-Perron vector of $G$.
Let \[\Gamma=\sum_{u\in U_1 }\sum_{v\in U_3}\left(\alpha\left(x_u^2+x_v^2\right)+2(1-\alpha)x_u x_v\right)-\sum_{u\in U_1 }\sum_{v\in U_2}\left(\alpha\left(x_u^2+x_v^2\right)+2(1-\alpha)x_u x_v\right).\]

Suppose that $k\ge 2$. Let $G'=G-v_1v_{2k+1}+v_{2k+1}v_{2k-1}$. Note that the length of $C$ is odd.
As we pass from $G$ to $G'$, the distance between a vertex in $S_1$ and a vertex in $S_3$ is increased by at least $1$, the distance between $S_2$ and $V(T_{2k+1})$ is decreased by $1$, and the distance between all other vertex pairs are increased or remain unchanged.
Thus
\begin{eqnarray*}
\mu_{\alpha}(G')-\mu_{\alpha}(G)&\ge & x^\top (D_{\alpha}(G')-D_{\alpha}(G))x \\
&\ge &\sum_{u\in U_1}\sum_{v\in U_3}\left(\alpha\left(x_u^2+x_v^2\right)+2(1-\alpha)x_u x_v\right)\\
&&-\sum_{u\in V(T_{2k+1})}\sum_{v\in U_2}\left(\alpha\left(x_u^2+x_v^2\right)+2(1-\alpha)x_u x_v\right)\\
&>&\sum_{u\in U_1}\sum_{v\in U_3}\left(\alpha\left(x_u^2+x_v^2\right)+2(1-\alpha)x_u x_v\right)\\
&&-\sum_{u\in U_1}\sum_{v\in U_2}\left(\alpha\left(x_u^2+x_v^2\right)+2(1-\alpha)x_u x_v\right).
\end{eqnarray*}
If $\Gamma\ge 0$, then $\mu_{\alpha}(G')>\mu_{\alpha}(G)$, a contradiction. Thus $\Gamma<0$.
Let $G''=G-v_{2k}v_{2k-1}+v_{2k}v_1$. As we pass from $G$ to $G''$, the distance between a vertex in $S_1$ and a vertex in $U_2$ is increased by at least $1$, the distance between $U_3$ and $V(T_{2k})$ is decreased by $1$, and the distance between all other vertex pairs are increased or remain unchanged.
As  above, we have
\begin{eqnarray*}
\mu_{\alpha}(G'')-\mu_{\alpha}(G)&\ge & x^\top(D_{\alpha}(G'')-D_{\alpha}(G))x \\
&\ge&\sum_{u\in U_1}\sum_{v\in U_2}\left(\alpha\left(x_u^2+x_v^2\right)+2(1-\alpha)x_u x_v\right)\\
&&-\sum_{u\in V(T_{2k})}\sum_{v\in U_3}\left(\alpha\left(x_u^2+x_v^2\right)+2(1-\alpha)x_u x_v\right)\\
&>&\sum_{u\in U_1}\sum_{v\in U_2}\left(\alpha\left(x_u^2+x_v^2\right)+2(1-\alpha)x_u x_v\right)\\
&&-\sum_{u\in U_1}\sum_{v\in U_3}\left(\alpha\left(x_u^2+x_v^2\right)+2(1-\alpha)x_u x_v\right)\\
&>& 0.
\end{eqnarray*}
Thus  $\mu_{\alpha}(G'')>\mu_{\alpha}(G)$, also a contradiction. It follows that $k=1$, i.e., the unique cycle of $G$ is of length $3$.

Obviously, $T_i$ is a tree for $1\le i\le 3$. For $1\le i\le 3$, by Corollary~\ref{pq}, if $T_i$ is nontrivial, then it is a path with a terminal vertex $v_i$. Then by Corollary~\ref{adpq}, only one $T_i$ is nontrivial. Thus $G\cong Ki_{n,3}$.
\end{proof}

%

\section{Remarks}

Some spectral properties of $D_{\alpha}(G)$ have been established in \cite{CHT}.
In this paper, we study the distance $\alpha$-spectral radius of a connected graph. 
We consider bounds for the distance $\alpha$-spectral radius, local transformations to change the distance $\alpha$-spectral radius, and the characterizations for graphs with minimum and/or maximum distance $\alpha$-spectral radius in some classes of connected graphs.
Lots of results in the literature are generalized and/or improved.

Besides the distance $\alpha$-spectral radius, we may concern other eigenvalues of $D_{\alpha}(G)$ for a connected graph $G$. We give  examples.

For an $n\times n$ Hermitian matrix $A$, let $\lambda_1(A), \dots, \lambda_n(A)$ be the eigenvalues, arranged in a non-increasing order.

\begin{Lemma}\label{CW}\cite{CR} Let $A$, $B$ be $n\times n$ Hermitian matrices. Then
\[
\lambda_{j}(A+B)\le \lambda_{i}(A)+\lambda_{j-i+1}(B) \mbox{ for $1\le i\le j\le n$},
\]
and
\[
\lambda_{j}(A+B)\ge \lambda_{i}(A)+\lambda_{j-i+n}(B) \mbox{ for $1\le j\le i\le n$}.
\]
\end{Lemma}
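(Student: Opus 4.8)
The plan is to derive both inequalities from the Courant--Fischer variational characterization of the eigenvalues of a Hermitian matrix, which is the natural tool here. For an $n\times n$ Hermitian matrix $M$ and $1\le k\le n$, writing $R_M(x)=\frac{x^*Mx}{x^*x}$ for the Rayleigh quotient, this characterization gives
\[
\lambda_k(M)=\min_{\substack{W\subseteq\mathbb{C}^n\\\dim W=n-k+1}}\ \max_{\substack{x\in W\\x\ne 0}}R_M(x).
\]
I would take this ``min over codimension-$(k-1)$ subspaces'' form as the working tool, together with the elementary identity $R_{A+B}(x)=R_A(x)+R_B(x)$.

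First I would establish the upper bound $\lambda_j(A+B)\le\lambda_i(A)+\lambda_{j-i+1}(B)$ for $1\le i\le j\le n$. Applying the characterization to $A$ with index $i$ produces a subspace $W_A$ with $\dim W_A=n-i+1$ on which $R_A(x)\le\lambda_i(A)$, and applying it to $B$ with index $j-i+1$ produces a subspace $W_B$ with $\dim W_B=n-(j-i+1)+1=n-j+i$ on which $R_B(x)\le\lambda_{j-i+1}(B)$. The crux is a dimension count: since $\dim W_A+\dim W_B=2n-j+1$, we get $\dim(W_A\cap W_B)\ge 2n-j+1-n=n-j+1$, so one may select a subspace $W\subseteq W_A\cap W_B$ with $\dim W=n-j+1$. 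For every nonzero $x\in W$ one then has $R_{A+B}(x)=R_A(x)+R_B(x)\le\lambda_i(A)+\lambda_{j-i+1}(B)$, and feeding this particular $W$ as a competitor into the outer minimum in the characterization of $\lambda_j(A+B)$ yields the desired bound.

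For the lower bound $\lambda_j(A+B)\ge\lambda_i(A)+\lambda_{j-i+n}(B)$ for $1\le j\le i\le n$, I would not repeat the variational argument but instead reduce to the upper bound by negation, using $\lambda_k(-M)=-\lambda_{n+1-k}(M)$. Applying the already-proved upper bound to the pair $-A,-B$ with outer index $n+1-j$ and inner index $n+1-i$, and then translating every eigenvalue of a negated matrix back through the identity above, converts that inequality into exactly the claimed lower bound; verifying that the admissible range $1\le n+1-i\le n+1-j$ for the inner index corresponds to $1\le j\le i\le n$ confirms the stated hypotheses.

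I expect the only real difficulty to be the index bookkeeping rather than any analytic subtlety: one must match the subscripts $j-i+1$ and $j-i+n$ to the correct subspace dimensions, check that the dimension count yields at least $n-j+1$ so that a legitimate competitor subspace exists in the minimum, and carry the substitution $M\mapsto -M$ through the relation $\lambda_k(-M)=-\lambda_{n+1-k}(M)$ without an off-by-one slip. Once Courant--Fischer is in hand, no further machinery is needed.
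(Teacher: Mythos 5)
Your proposal is a correct and complete proof. Note, however, that the paper does not prove this lemma at all: it is Weyl's inequality for eigenvalues of sums of Hermitian matrices, quoted from the reference \cite{CR} as a known result, so there is no in-paper argument to compare against. Your route is the standard textbook one: the Courant--Fischer characterization $\lambda_k(M)=\min_{\dim W=n-k+1}\max_{0\ne x\in W}R_M(x)$, the additivity $R_{A+B}=R_A+R_B$, the dimension count $\dim(W_A\cap W_B)\ge (n-i+1)+(n-j+i)-n=n-j+1$ producing a legitimate competitor subspace for the outer minimum, and the reduction of the lower bound to the upper bound via $\lambda_k(-M)=-\lambda_{n+1-k}(M)$. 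I checked the index bookkeeping: the inner index for $-B$ becomes $i-j+1$, which translates back to $\lambda_{n+1-(i-j+1)}(B)=\lambda_{j-i+n}(B)$, and the admissibility condition $1\le n+1-i\le n+1-j\le n$ is exactly $1\le j\le i\le n$, so both halves come out with the subscripts as stated in the lemma. The argument is self-contained modulo Courant--Fischer and would serve as a legitimate replacement for the citation.
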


As in the recent work of Atik and Panigrahi~\cite{AP}, we have

\begin{Theorem} Let $G$ be a connected graph and $\lambda$ be any eigenvalue of $D_{\alpha}(G)$ other than the distance $\alpha$-spectral radius. Then \[
2\alpha T_{\min}(G)-T_{\max}(G)+(1-\alpha)(n-2)\le \lambda \le T_{\max}(G)-(1-\alpha)n.
\]
\end{Theorem}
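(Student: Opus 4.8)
The goal is to bound an arbitrary eigenvalue $\lambda \neq \mu_\alpha(G)$ of $D_\alpha(G)$ from both sides. The plan is to decompose $D_\alpha(G)$ into a sum of two Hermitian matrices whose spectra are tractable, and then apply the eigenvalue interlacing inequalities of Lemma~\ref{CW}. The natural splitting is $D_\alpha(G) = \alpha T(G) + (1-\alpha)D(G)$ itself, but a cleaner choice exploits the structure of the distance matrix relative to the adjacency matrix. Writing $D(G) = A(G) + R$, where $A(G)$ is the adjacency matrix (entries $1$ for adjacent pairs) and $R$ collects the remaining distance contributions (entries $d_G(u,v)$ for non-adjacent $u,v$, all at least $2$), one can try to control the eigenvalues of each piece. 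I expect the upper bound to come from applying the first inequality in Lemma~\ref{CW} with a judicious choice of $i$ and $j$, and the lower bound from the second inequality.

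First I would set up the splitting so that one summand has an easily computed spectrum. A promising candidate is to write $D_\alpha(G) = B + C$ where $B = (1-\alpha)(J - I) + \alpha T_{\min}(G) I$ or something comparable built from the all-ones matrix $J$, since $J$ has eigenvalues $n$ (once) and $0$ (with multiplicity $n-1$), and the correction $C = D_\alpha(G) - B$ can be bounded using the row-sum (transmission) data. The key quantitative inputs are that each diagonal entry of $D_\alpha(G)$ is $\alpha T_G(v)$, bounded between $\alpha T_{\min}(G)$ and $\alpha T_{\max}(G)$, and each row sum of $D_\alpha(G)$ equals $T_G(v)$, bounded above by $T_{\max}(G)$. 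The appearance of $(1-\alpha)(n-2)$ and $(1-\alpha)n$ in the target bounds strongly suggests that the off-diagonal part contributes a factor $(1-\alpha)$ times an eigenvalue of $J$ or $J-I$, namely $(1-\alpha)\cdot(-1)$ from the repeated eigenvalue $0$ of $J$ shifted by $-I$, scaled appropriately.

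Concretely, for the upper bound I would choose the summand $C$ so that its largest relevant eigenvalue is $T_{\max}(G)$ and apply $\lambda_j(A+B) \le \lambda_1(A) + \lambda_j(B)$ (taking $i=1$), where one factor captures the maximal transmission $T_{\max}(G)$ and the other contributes $-(1-\alpha)n$ coming from a non-Perron eigenvalue of the all-ones block. Since $\lambda$ is not the distance $\alpha$-spectral radius, $\lambda = \lambda_j(D_\alpha(G))$ for some $j \ge 2$, which lets me avoid the Perron eigenvalue $n$ of $J$ and instead pick up its eigenvalue $0$, yielding the $-(1-\alpha)n$ term. For the lower bound I would symmetrically apply $\lambda_j(A+B) \ge \lambda_n(A) + \lambda_j(B)$, where the minimal diagonal and transmission data produce $2\alpha T_{\min}(G) - T_{\max}(G) + (1-\alpha)(n-2)$; the asymmetric mixture of $T_{\min}$ and $T_{\max}$ here signals that I must split off both the diagonal part $\alpha T(G)$ (bounded below by $\alpha T_{\min}(G) I$, contributing $2\alpha T_{\min}$ after combining two applications or a single careful one) and the off-diagonal distance part separately.

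The main obstacle will be pinning down the exact splitting and the exact indices $i,j$ so that the extremal eigenvalues of the auxiliary matrices evaluate precisely to the constants in the statement, rather than to looser bounds. In particular, getting the coefficient $2\alpha T_{\min}(G) - T_{\max}(G)$ in the lower bound requires handling the diagonal transmission matrix $\alpha T(G)$ and the scaled distance matrix with two applications of Lemma~\ref{CW}, or a single application to a three-term decomposition, and carefully tracking which term supplies $T_{\min}$ and which supplies $T_{\max}$. I would verify the construction first on the complete graph $K_n$, where $D_\alpha(K_n) = \alpha(n-1)I + (1-\alpha)(J-I)$ has eigenvalues $n-1$ and $\alpha(n-1) - (1-\alpha) = \alpha n - 1$; checking that both bounds collapse correctly there (with $T_{\min} = T_{\max} = n-1$) will confirm the indices are chosen correctly before committing to the general argument.
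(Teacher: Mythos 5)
Your plan is essentially the paper's proof: the paper takes exactly your candidate splitting, $D_{\alpha}(G)=A+B$ with $A=(\alpha T_{\min}(G)-(1-\alpha))I_n+(1-\alpha)J_{n\times n}$, computes $\lambda_1(A)=\alpha T_{\min}(G)+(1-\alpha)(n-1)$ and $\lambda_j(A)=\alpha T_{\min}(G)-(1-\alpha)$ for $j\ge 2$, and applies Lemma~\ref{CW} with $i=j$ in both directions. The only detail to make explicit is that the correction $B=D_{\alpha}(G)-A$ is entrywise nonnegative (since $d_G(u,v)\ge 1$ off the diagonal and $T_G(v)\ge T_{\min}(G)$ on it), so that $|\lambda_n(B)|\le \lambda_1(B)\le T_{\max}(G)-\alpha T_{\min}(G)-(1-\alpha)(n-1)$, its maximum row sum; with that, a single two-term decomposition also yields the lower bound --- no three-term splitting or double application is needed, because the $2\alpha T_{\min}(G)$ arises from the $\alpha T_{\min}(G)$ in $\lambda_j(A)$ together with the $+\alpha T_{\min}(G)$ inside $-\lambda_1(B)$.
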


\begin{proof}  Let $D_{\alpha}(G)=A+B$, where $A=(\alpha T_{\min}(G)-(1-\alpha))I_n+(1-\alpha)J_{n\times n}$. Then $B$ is a nonnegative symmetric matrix with maximum row sum $T_{\max}(G)-\alpha T_{\min}(G)-(1-\alpha)(n-1)$. Thus $|\lambda_{n}(B)|\le \lambda_{1}(B)\le T_{\max}(G)-\alpha T_{\min}(G)-(1-\alpha)(n-1)$.

 For matrix $A$, we have $\lambda_{1}(A)=\alpha T_{\min}(G)+(1-\alpha)(n-1)$ and $\lambda_{j}(A)=\alpha T_{\min}(G)-1+\alpha$ for $j=2,\dots,n$. For $j=2,\dots, n$, we have by Lemma~\ref{CW} that
\begin{eqnarray*}
\lambda_{j}(D_{\alpha}(G)) &\le & \lambda_{1}(B)+\lambda_{j}(A)\\
&\le & T_{\max}(G)-\alpha T_{\min}(G)-(1-\alpha)(n-1)+\alpha T_{\min}(G)-1+\alpha\\
&=&T_{\max}(G)-(1-\alpha)n.
\end{eqnarray*}
Similarly, for $j=2,\dots,n$,
\begin{eqnarray*}
\lambda_{j}(D_{\alpha}(G)) &\ge & \lambda_{n}(B)+\lambda_{j}(A)\\
&\ge & -T_{\max}(G)+\alpha T_{\min}(G)+(1-\alpha)(n-1)+\alpha T_{\min}(G)-1+\alpha\\
&=& 2\alpha T_{\min}(G)-T_{\max}(G)+(1-\alpha)(n-2).
\end{eqnarray*}
This completes the proof.
\end{proof}

Let $G$ be a connected graph and $\lambda$ be any eigenvalue of $D_{\alpha}(G)$ other than the distance $\alpha$-spectral radius. By previous theorem, we have \[
|\lambda| \le T_{\max}(G)-(1-\alpha)(n-2).
\]

The distance $\alpha$-energy of a connected graph $G$ of order $n$ is defined as
\[
\mathcal{E}_{\alpha}(G)=\sum_{i=1}^n\left|\mu_{\alpha}^{(i)}(G)-\frac{2\alpha\sigma(G)}{n}\right|.
\]
Obviously, $\mathcal{E}_0(G)$ is the distance energy of $G$~\cite{In,ZI}, while
\[
\mathcal{E}_{1/2}(G)=\frac{1}{2}\sum_{i=1}^n\left|2\mu_{1/2}^{(i)}(G)-\frac{2\sigma(G)}{n}\right|
\]
is half of the distance signless Laplacian energy of $G$~\cite{DAH}.
Thus, it is possible to study the distance energy and the distance signless Laplacian energy in a unified way.

\vspace{3mm}
\noindent {\bf Acknowledgement.} This work was supported by the National Natural Science Foundation of China (No.~11671156).

\end{document}